\theoremstyle{plain}
\newtheorem{theorem}{Theorem}
\newtheorem{lemma}[theorem]{Lemma}
\newtheorem{corollary}[theorem]{Corollary}
\newtheorem{proposition}[theorem]{Proposition}
\theoremstyle{definition}
\theoremstyle{remark}
\newtheorem{remark}[theorem]{Remark}
\newcommand{\T}{\mathbb{T}}
\newcommand{\R}{\mathbb{R}}
\newcommand{\Z}{\mathbb{Z}}
\newcommand{\C}{\mathbb{C}}
\newcommand{\N}{\mathbb{N}}
\newcommand{\xrt}{\mathcal{I}}
\newcommand{\iip}[2]{\left(#1,#2\right)}
\newcommand{\vev}[1]{\left\langle#1\right\rangle}
\newcommand{\der}{\mathrm{d}}
\newcommand{\eps}{\varepsilon}
\renewcommand{\phi}{\varphi}
\newcommand{\abs}[1]{\left| #1 \right|}
\newcommand{\norm}[1]{\Vert #1 \Vert}
\newcommand{\aabs}[1]{\left\| #1 \right\|}
\newcommand{\dt}{\der t}
\DeclareMathOperator{\id}{id}
\DeclareMathOperator*{\argmin}{arg\,min}
\title{Torus computed tomography}
\author{Joonas Ilmavirta}
\address{Department of Mathematics and Statistics, University of Jyv\"askyl\"a, P.O.Box~35 (MaD) FI-40014 University of Jyv\"askyl\"a, Finland}
\email{joonas.ilmavirta@jyu.fi}
\email{jesse.t.railo@jyu.fi}
\author{Olli Koskela}
\address{HAMK Smart Research Unit, Häme University of Applied Sciences, Hämeenlinna, FI-13100, Finland and BioMediTech Institute and Faculty of Medicine and Health Technology, Tampere University, FI-33014 Tampere University, Finland}
\email{olli.koskela@hamk.fi}
\author{Jesse Railo}
\date{\today}
\thanks{This is the accepted manuscript of the work first published in \emph{SIAM J. Appl. Math.}, 80 (4), 1947--1976. \href{https://doi.org/10.1137/19M1268070}{10.1137/19M1268070}.}
\begin{document}

\begin{abstract} We present  a  new  computed  tomography  (CT)  method  for  inverting  the Radon  transform  in  2D.  The idea relies on the geometry of the flat torus, hence we call the new method Torus CT. We prove new inversion formulas for integrable functions, solve a minimization problem associated to Tikhonov regularization in Sobolev spaces and prove that the solution operator provides an admissible regularization strategy with a quantitative stability estimate. This regularization is a simple post-processing low-pass filter for the Fourier series of a phantom. We also study the adjoint and the normal operator of the X-ray transform on the flat torus. The X-ray transform is unitary on the flat torus. We have  implemented the Torus CT method using Matlab and  tested it with simulated data with promising results. The inversion method is meshless in the sense that it gives out a closed form function that can be evaluated at any point of interest. 
\end{abstract}

\keywords{X-ray tomography, Fourier series, regularization}

\maketitle

\section{Introduction}

We present a new computed tomography (CT) method for X-ray tomography in 2D, {based on a relatively recent theoretical approach to X-ray tomography.}
The method reconstructs the Fourier series of a phantom via the projection of X-ray data into X-ray data on the flat torus which has a remarkably simple inverse X-ray transform.
Therefore we call the new method \textit{Torus CT}.

{
We emphasize that Torus CT is new as a numerical approach, so it is beyond the scope of a single article to give a full and detailed answer to all the natural questions that arise.
This article should be seen as a proof of concept, exploring a new approach and showing that it is indeed feasible.
}

We have developed new mathematical theory and computational implementations.
{
We found a new inversion method and proved that it provides a regularization strategy with respect to suitable Sobolev norms; see sections~\ref{sec:results}.}
The numerical implementation was used to demonstrate the potential of Torus CT method in various simulations and tests, including data simulation in torus geometry and traditional experimental projections.
Torus CT provided an efficient basis for inverse solution and its efficacy is shown in this work.

The article is organized as follows. In section~\ref{sec:ct} we give an overview of computed tomography and regularization, in section~\ref{sec:radon} we discuss works related to X-ray tomography on torus, and in section~\ref{sec:results} we state the main theoretical results in this paper. Section~\ref{sec:torusCTall} includes mathematical preliminaries, proofs of theorems and numerical analysis for Torus CT method. Section~\ref{sec:implementation} contains mathematical formulation of computational forward and inverse models. Section~\ref{sec:numerics} presents numerical experiments and their analysis. Conclusions are given in section~\ref{sec:conclusions}.
We have included a short note on supplementary material in the end of the article.

\subsection{Overview of computed tomography and regularization methods}
\label{sec:ct}

We give here an overview of X-ray tomography. Practical CT imaging was first introduced by Cormack and Hounsfield in 1970s based on the theoretical work of Cormark \cite{C63,C64} in 1960s. The mathematical theory itself was in fact earlier studied by Radon~\cite{R17} in 1917. We give here only a narrow sample of topics and references in X-ray computed tomography. More references can be found in the cited works.

CT has many applications in medical imaging and engineering utilizing computerized axial tomography (CAT), positron-emission tomography (PET) and single-photon emission computed tomography (SPECT)~\cite{K14}. Possible applications include imaging of patients in medicine and nondestructive testing in engineering. The most common inversion method for CT imaging is based on the filtered back-projection (FBP) algorithms \cite{N01, KS01}. 

{The FBP algorithms work well if there is sufficiently dense set of measurements. The filter in the standard FBP algorithm stands for applying the fractional Laplacian $(-\Delta)^{1/2}$ to the back-projected Radon transform data, and this does not include any regularization \cite{H99,N01}. In addition, regularization is required to control the errors in reconstructions caused by a measurement noise. See for example \cite{MS12,K14}.}

Usually a regularization method is applied for a discretized X-ray tomography model as in the examples listed next. The most common regularization methods include Tikhonov regularization and truncated singular value decomposition (TSVD) which promote smoothness of reconstructions~\cite{MS12}. {Other common regularization approaches include total variation (TV) regularization which promotes $\ell^1$ sparsity and, therefore, piecewise constant reconstructions \cite{Setal03,Hetal13,Netal16,Hetal17}. Another approach is to encode a priori information as a probability distribution and think the reconstruction problem as an Bayesian inverse problem for finding a posterior distribution \cite{Setal03,Ketal03,KS05,Hetal13,Hetal17}.}

The main difference of our proposed Tikhonov regularization approach, stated in theorem~\ref{thm:regularization}, compared to the usual regularization methods is that we do not discretize a phantom and regularization takes a form of a simple low-pass filter on the Fourier side. This also reflects the fact that Torus CT method is meshless (or meshfree) method. Theorem~\ref{thm:strategy} states that the proposed regularization method is an admissible regularization strategy. Details are given in the subsequent sections.

\subsection{The X-ray transform on torus, the Radon transform and the geodesic X-ray transform}
\label{sec:radon}

In this paper we consider application of the X-ray transform on the flat torus $\T^n = \R^n /\Z^n$ to the usual CT in the case when $n = 2$. In this section we give an account of theoretical works on the X-ray transforms on tori. As expected, the \textit{$d$-plane Radon transform} of a function~$f$ on~$\T^n$ encodes the integrals of~$f$ over all periodic $d$-planes. The X-ray transform corresponds to the case when $d = 1$ and is in fact the~\textit{geodesic X-ray transform} on~$\T^n$ over closed geodesics. It is described in section~\ref{sec:red} how the usual CT reconstruction on~$\R^2$ can be reduced to a reconstruction on~$\T^2$.

Injectivity, reconstruction and certain stability estimates of the $d$-plane Radon transform on~$\T^n$ were proved for distributions by Ilmavirta in~\cite{I15}. The first injectivity result for the geodesic X-ray transform on~$\T^2$ was obtained by Strichartz in~\cite{S82}, and generalized to~$\T^n$ by Abouelaz and Rouvière in~\cite{AR11} if the Fourier transform is~$\ell^1(\Z^n)$. Abouelaz proved uniqueness under the same assumption for the $d$-plane Radon transform in~\cite{A11}. A more general view and references on the Radon transform and the geodesic X-ray transform are given in \cite{S94,H99,PSU14,IM18}.

\subsection{Inversion formulas and Tikhonov regularization}
\label{sec:results}

{
We state here our main theorems regarding the X-ray transform on~$\T^2$. We write the X-ray transform on~$\T^2$ as~$\xrt $ and the formula
\begin{equation}
\xrt f(x,v)=\xrt_vf(x) = \int_0^1 f(x+tv)\dt,\quad x\in\T^2, v \in \Z^2,
\end{equation}
holds true for sufficiently regular functions $f\colon \T^2 \to \C$, including $L^1(\T^2)$. In our proofs, we subsequently apply the Fourier slice theorem of the X-ray transform on~$\T^2$, stated in formula~\eqref{eq:recform}. This fundamental, but simple, property is proved in~\cite{I15}. The exact definition of the X-ray transform of the periodic distributions on $\T^2$ is given in section~\ref{sec:torusCTall}.}

Our first theorem gives new inversion formulas for the X-ray transform. We give two proofs of theorem~\ref{thm:invfor} in section~\ref{sec:rec}. The first one does not rely on the inversion formula of~\cite{I15} whereas the second simpler proof does.

\begin{theorem}
\label{thm:invfor}
Suppose that $f \in L^1(\T^2)$. Let $k \in \Z^2$. If $k,v \neq 0$ and $v \bot k$, then
\begin{equation}
\label{eq:rec1}
\hat{f}(k) = \begin{cases} \int_0^1 \xrt_vf(0,y)\exp(-2 \pi ik_2y)dy,\quad k_2 \neq 0 \\
\int_0^1 \xrt_vf(x,0)\exp(- 2 \pi i k_1x)dx,\quad k_1 \neq 0.\end{cases}\end{equation} If $k = 0$, then \begin{equation}\label{eq:rec0}\hat{f}(k) = \int_0^1 \xrt_{(1,0)}f(0,y)dy = \int_0^1 \xrt_{(0,1)}f(x,0)dx.
\end{equation}
The function~$f$ can be reconstructed by Fourier series~\eqref{eq:Fseries} and formulas~\eqref{eq:rec1} and~\eqref{eq:rec0}. 
\end{theorem}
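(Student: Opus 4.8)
The plan is to compute the two coordinate-line integrals in \eqref{eq:rec1} by expanding $\xrt_v f$ in its Fourier series on $\T^2$ and invoking the Fourier slice theorem \eqref{eq:recform}, which identifies the Fourier coefficient $\widehat{\xrt_v f}(m)$ with $\hat{f}(m)$ whenever $m \perp v$ and declares it to vanish when $m\cdot v \neq 0$. The key observation is that each right-hand side of \eqref{eq:rec1} is precisely a one-dimensional Fourier coefficient of the restriction of $\xrt_v f$ to a coordinate axis, so that integrating against the character $\exp(-2\pi i k_2 y)$ (respectively $\exp(-2\pi i k_1 x)$) should collapse the full two-dimensional series to a single surviving term. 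Once this is set up, the heart of the argument is a short lattice computation combining the hypothesis $v\perp k$ with the support condition coming from the slice theorem.

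Concretely, for the first case I would write $\xrt_v f(0,y)=\sum_{m\in\Z^2}\widehat{\xrt_v f}(m)\exp(2\pi i m_2 y)$, multiply by $\exp(-2\pi i k_2 y)$, and integrate over $y\in[0,1]$. Orthogonality of the characters $\exp(2\pi i(m_2-k_2)y)$ annihilates every term except those with $m_2=k_2$, leaving $\sum_{m_1\in\Z}\widehat{\xrt_v f}(m_1,k_2)$. The slice theorem then restricts this sum to indices obeying $(m_1,k_2)\cdot v=0$, that is $m_1 v_1 + k_2 v_2 = 0$; subtracting the hypothesis $k_1 v_1 + k_2 v_2 = 0$ reduces this to $(m_1-k_1)v_1=0$. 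Since $k_2\neq 0$ forces $v_1\neq 0$ (otherwise $v=(0,v_2)$ could be orthogonal to $k$ only if $k_2=0$), the unique surviving index is $m_1=k_1$, and the sum equals $\widehat{\xrt_v f}(k)=\hat{f}(k)$, as claimed. The second case of \eqref{eq:rec1} is identical after exchanging the two coordinates, and the $k=0$ formula \eqref{eq:rec0} follows from the same computation with the trivial character, where the support condition isolates $m=0$. The reconstruction of $f$ is then immediate by substituting these coefficients into the Fourier series \eqref{eq:Fseries}.

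The main obstacle is the legitimacy of the termwise integration, since the Fourier series of an $L^1$ function need not converge pointwise and the restriction of $\xrt_v f$ to a line is a priori delicate. I would resolve this by exploiting that $\xrt_v f$ is invariant under continuous translation along $v$, because $\xrt_v f(x+sv)=\int_0^1 f(x+(s+t)v)\dt=\xrt_v f(x)$ by periodicity of $t\mapsto f(x+tv)$; hence $\xrt_v f$ factors through the quotient circle $\T^2/\R v$, its Fourier support lies on the rank-one sublattice $\{m\in\Z^2 : m\cdot v=0\}=\Z w$, and the expansion above is genuinely a one-dimensional Fourier series, for which the coefficient extraction is justified. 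Alternatively, one avoids series altogether by substituting the definition of $\xrt_v$ and applying Fubini directly to the double integral $\int_0^1\int_0^1 f(tv_1,\,y+tv_2)\exp(-2\pi i k_2 y)\,dy\,\dt$; this is the route for the proof that does not rely on the inversion formula of \cite{I15}, whereas the second, shorter proof simply inserts that known inversion formula in place of the present bookkeeping.
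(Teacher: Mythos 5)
Your proof is correct, but it is organized differently from the paper's. Both of the paper's proofs are change-of-variables arguments: they introduce the parallelogram coordinates $\phi_{v,m}$ adapted to the direction $v$, compute $\hat f(k)$ (first proof) or $\widehat{\xrt_v f}(k)$ (second proof) by Fubini in those coordinates, observe that the phase $k\cdot\phi_{v,m}(a,b)$ collapses to $k_2 b$ because $v\perp k$, and then reassemble the pieces (summing over $m=0,\dots,\abs{v_1}-1$) into the one-dimensional axis integral. Your main argument instead works entirely on the data side: you use the slice theorem to localize the Fourier support of $\xrt_vf$ to the rank-one lattice $\{m: m\cdot v=0\}$, identify the right-hand side of \eqref{eq:rec1} as a one-dimensional Fourier coefficient of the restriction of $\xrt_vf$ to a coordinate axis, and isolate the surviving frequency by the lattice computation $(m_1-k_1)v_1=0$ (which correctly covers non-primitive $v$). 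This replaces the geometric bookkeeping with an orthogonality argument, at the cost of having to justify restricting an $L^1$ function to a null set — which you do, correctly, via the translation invariance $\xrt_vf(p)=\xrt_vf(p+tv)$, the same fact the paper isolates as item \eqref{itm:prop3} of lemma~\ref{lem:L1range}; to be fully rigorous one should spell out that $\xrt_vf$ is the pullback of an $L^1$ function on the quotient circle and match the frequencies $pv_1\leftrightarrow k_2$, but this is routine. Note that your route relies on theorem~\ref{thm:ilmavirta} throughout, so it is a cousin of the paper's second proof; your one-sentence fallback (substitute the definition of $\xrt_v$ and apply Fubini to $\int_0^1\int_0^1 f(tv_1,y+tv_2)e^{-2\pi ik_2y}\,dy\,\dt$) is essentially the paper's first, slice-theorem-free proof with the folding of $\T^2$ into parallelograms replaced by unfolding the axis integral, and would need to be fleshed out (the substitution $x=tv_1$ wraps $\abs{v_1}$ times around $\T$) to serve as the independent injectivity proof the paper advertises.
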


Let~$Q$ denote the set of all integer directions; {a more detailed description will be given in section \ref{sec:adj}}. We consider a Tikhonov minimization problem: given some data $g\in H^r(\T^2\times Q)$, find
\begin{equation}
\label{eq:reg-min}
\argmin_{f \in H^r(\T^2)}
\left(\aabs{\xrt f-g}_{H^r(\T^2\times Q)}^2 + \alpha\aabs{f}^2_{H^s(\T^2)}\right)
.\end{equation}
Let us define the post-processing operator~$P^s_\alpha$ to be the Fourier multiplier $(1+\alpha\vev{k}^{2s})^{-1}$ and denote by~$\xrt^*$ the adjoint of~$\xrt$. We have the following theorems on regularization. The proofs are given in sections~\ref{sec:adj} and~\ref{sec:regstrat} respectively.

\begin{theorem}
\label{thm:regularization}
Let $r\in\R$, $s\geq r$, and $\alpha>0$.
Suppose $g\in H^r(\T^2\times Q)$.
The unique minimizer~$f$ of the minimization problem~\eqref{eq:reg-min} corresponding to Tikhonov regularization is $f=P^{s-r}_\alpha\xrt^*g\in H^{2s-r}(\T^2) \subset H^r(\T^2)$.
\end{theorem}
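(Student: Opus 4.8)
The plan is to turn the variational problem into a family of independent scalar minimizations indexed by the frequency $k\in\Z^2$, using the Fourier/Hilbert structure of the Sobolev norms together with the unitarity of $\xrt$. First I would record the Plancherel identity $\aabs{u}_{H^t(\T^2)}^2=\sum_{k\in\Z^2}\vev{k}^{2t}\abs{\hat u(k)}^2$ and the isometry property $\xrt^*\xrt=\id$, which follows from the unitarity of $\xrt$ established in section~\ref{sec:adj}; here $\xrt^*$ is the adjoint taken so that $\ip{\xrt f}{g}_{H^r(\T^2\times Q)}=\ip{f}{\xrt^*g}_{H^r(\T^2)}$. Expanding the squared residual as $\aabs{\xrt f-g}_{H^r}^2=\aabs{\xrt f}_{H^r}^2-2\,\mathrm{Re}\,\ip{\xrt f}{g}_{H^r}+\aabs{g}_{H^r}^2$ and applying the isometry gives $\aabs{\xrt f}_{H^r(\T^2\times Q)}^2=\aabs{f}_{H^r(\T^2)}^2$ together with $\ip{\xrt f}{g}_{H^r}=\ip{f}{\xrt^*g}_{H^r}$, so the objective equals $\aabs{f}_{H^r}^2-2\,\mathrm{Re}\,\ip{f}{\xrt^*g}_{H^r}+\alpha\aabs{f}_{H^s}^2$ up to the constant $\aabs{g}_{H^r}^2$.

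Passing to Fourier coefficients, the functional becomes, up to that constant,
\[
\sum_{k\in\Z^2}\left[\left(\vev{k}^{2r}+\alpha\vev{k}^{2s}\right)\abs{\hat f(k)}^2-2\,\vev{k}^{2r}\,\mathrm{Re}\!\left(\hat f(k)\,\overline{\widehat{\xrt^*g}(k)}\right)\right].
\]
The sum decouples completely over $k$, and each summand is a strictly convex quadratic in the single complex variable $\hat f(k)$ because its leading coefficient $\vev{k}^{2r}+\alpha\vev{k}^{2s}$ is strictly positive. Completing the square mode by mode yields the unique minimizing coefficients
\[
\hat f(k)=\frac{\vev{k}^{2r}}{\vev{k}^{2r}+\alpha\vev{k}^{2s}}\,\widehat{\xrt^*g}(k)=\frac{1}{1+\alpha\vev{k}^{2(s-r)}}\,\widehat{\xrt^*g}(k),
\]
which are exactly the Fourier coefficients of $P^{s-r}_\alpha\xrt^*g$. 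Since each scalar problem has a unique solution and the objective is the sum of these independent contributions, this simultaneously establishes existence and uniqueness of the minimizer, provided the resulting coefficient sequence defines an admissible function.

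It remains to check the claimed regularity. As $\xrt^*$ is bounded from $H^r(\T^2\times Q)$ to $H^r(\T^2)$, we have $\xrt^*g\in H^r(\T^2)$, and the multiplier $(1+\alpha\vev{k}^{2(s-r)})^{-1}$ decays like $\vev{k}^{-2(s-r)}$, hence gains $2(s-r)$ derivatives; this places $f$ in $H^{r+2(s-r)}=H^{2s-r}(\T^2)$, and the hypothesis $s\ge r$ gives the inclusion $H^{2s-r}\subset H^r$, so $f$ is indeed feasible. The main subtlety to handle carefully is the mismatch between the data index $r$ and the penalty index $s$: because the residual is measured in $H^r$ while the penalty is in $H^s$, the two Plancherel weights $\vev{k}^{2r}$ and $\vev{k}^{2s}$ must be tracked separately, and it is precisely the isometry $\xrt^*\xrt=\id$ that lets one rewrite the residual in terms of $f$ alone and thereby decouple the problem across frequencies. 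The condition $s\ge r$ is then exactly what guarantees that the formal term-by-term minimizer lies in the feasible class $H^r$ (in fact in $H^s$), so that the mode-wise minimization genuinely produces the global minimizer rather than a formal series outside the admissible space.
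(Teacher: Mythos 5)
Your proof is correct and follows essentially the same route as the paper: both reduce the functional to a sum of independent scalar quadratics in the Fourier coefficients $\hat f(k)$ and complete the square mode by mode, arriving at the multiplier $(1+\alpha\vev{k}^{2(s-r)})^{-1}$ applied to $\widehat{\xrt^*g}(k)=\hat g(k,\hat k^\perp)$. The only cosmetic difference is that you invoke the already-established isometry and adjoint identities to decouple the modes, whereas the paper redoes the Fourier bookkeeping over the lines $v^\perp\Z$ explicitly before discarding the $f$-independent terms.
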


\begin{theorem}
\label{thm:strategy}
Suppose $r,t,s,\delta\in\R$ are such that $2s+t\geq r$, $\delta\geq0$, and $s>0$.
We assume that $f\in H^{r+\delta}(\T^2)$ and $g\in H^t(\T^2\times Q)$.

Then our regularized reconstruction operator~$P^s_\alpha\xrt^*$ gives a regularization strategy in the sense that
\begin{equation}
\lim_{\eps\to0}
\sup_{\aabs{g}_{H^t(\T^2\times Q)}\leq\eps}
\aabs{P^s_{\alpha(\eps)}\xrt^*(\xrt f+g)-f}_{H^r(\T^2)}
=
0,
\end{equation}
where $\alpha(\eps)=\sqrt{\eps}$.

Moreover, if $\aabs{g}_{H^t(\T^2\times Q)}\leq\eps$, $0<\delta<2s$ and $0<\alpha\leq 2s/\delta-1$, we have
\begin{equation}
\label{eq:strategy-estimate}
\aabs{P^s_\alpha\xrt^*(\xrt f+g)-f}_{H^r(\T^2)}
\leq
\alpha^{\delta/2s}
C(\delta/2s)
\aabs{f}_{H^{r+\delta}(\T^2)}
+
\frac{\eps}{\alpha}
,
\end{equation}
where $C(x)=x(x^{-1}-1)^{1-x}$.
\end{theorem}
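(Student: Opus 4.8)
The plan is to split the reconstruction error into a deterministic \emph{bias} part, coming from the regularizing filter acting on~$f$, and a \emph{noise} part, coming from~$g$, and to bound each separately. The starting point is the unitarity of~$\xrt$ and the structure of the normal operator established in section~\ref{sec:adj}, which give $\xrt^*\xrt=\id$ on~$H^r(\T^2)$ together with $\aabs{\xrt^*h}_{H^t(\T^2)}\leq\aabs{h}_{H^t(\T^2\times Q)}$. Since $P^s_\alpha$ is a Fourier multiplier it commutes with $\xrt^*\xrt$, so I would write
\begin{equation*}
P^s_\alpha\xrt^*(\xrt f+g)-f=(P^s_\alpha f-f)+P^s_\alpha\xrt^*g,
\end{equation*}
where the first summand depends only on~$f$ and the filter and the second only on the noise. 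The triangle inequality then reduces the whole theorem to one estimate for each summand.

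For the noise term I would estimate $P^s_\alpha$ as an operator $H^t(\T^2)\to H^r(\T^2)$. Using $1+\alpha\vev{k}^{2s}\geq\alpha\vev{k}^{2s}$, the hypothesis $2s+t\geq r$, and $\vev{k}\geq1$, its symbol obeys $\vev{k}^{r-t}(1+\alpha\vev{k}^{2s})^{-1}\leq\alpha^{-1}\vev{k}^{r-t-2s}\leq\alpha^{-1}$. Hence $\aabs{P^s_\alpha\xrt^*g}_{H^r}\leq\alpha^{-1}\aabs{\xrt^*g}_{H^t}\leq\alpha^{-1}\aabs{g}_{H^t}\leq\eps/\alpha$, which is exactly the second term of~\eqref{eq:strategy-estimate} and, with $\alpha(\eps)=\sqrt\eps$, tends to~$0$ since $\eps/\sqrt\eps=\sqrt\eps$.

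For the bias term I would write $\aabs{P^s_\alpha f-f}_{H^r}^2$ as a Fourier series and factor out $\vev{k}^{2(r+\delta)}\abs{\hat f(k)}^2$, reducing matters to controlling the multiplier $m(k)=\vev{k}^{-2\delta}\bigl(\alpha\vev{k}^{2s}(1+\alpha\vev{k}^{2s})^{-1}\bigr)^2$. The substitution $w=\alpha\vev{k}^{2s}$ turns this into $\alpha^{\delta/s}h(w)$ with $h(w)=w^{2-\delta/s}(1+w)^{-2}$, and a one-variable computation shows $h$ increases then decreases with a unique interior maximizer $w^\ast=2s/\delta-1$. This is where the remaining hypotheses enter: $0<\delta<2s$ makes $w^\ast$ positive and finite, while $0<\alpha\leq2s/\delta-1=w^\ast$ guarantees that $w^\ast$ lies in the admissible range $w\geq\alpha$ (recall $\vev{k}\geq1$), so that $\sup_{w\geq\alpha}h(w)=h(w^\ast)$ bounds the discrete supremum over~$k$. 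Evaluating $h(w^\ast)$ with $1+w^\ast=2s/\delta$ and taking square roots should collapse exactly to $C(\delta/2s)=x(x^{-1}-1)^{1-x}$ at $x=\delta/2s$, yielding $\aabs{P^s_\alpha f-f}_{H^r}\leq\alpha^{\delta/2s}C(\delta/2s)\aabs{f}_{H^{r+\delta}}$; adding the noise bound gives~\eqref{eq:strategy-estimate}.

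For the regularization-strategy limit I would not need the sharp constant, which is fortunate since the weaker hypotheses there allow $\delta=0$ and $\delta\geq2s$. Instead, since $f\in H^{r+\delta}(\T^2)\subset H^r(\T^2)$, each term of the series for $\aabs{P^s_\alpha f-f}_{H^r}^2$ is dominated by the summable $\vev{k}^{2r}\abs{\hat f(k)}^2$ and tends to~$0$ as $\alpha\to0$, so dominated convergence for series gives $\aabs{P^s_\alpha f-f}_{H^r}\to0$. Because the bias is independent of~$g$, the supremum over $\aabs{g}_{H^t}\leq\eps$ is bounded by $\aabs{P^s_\alpha f-f}_{H^r}+\eps/\alpha(\eps)$, and with $\alpha(\eps)=\sqrt\eps$ both parts vanish as $\eps\to0$. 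The hard part is the bias optimization of the third paragraph: correctly locating $w^\ast$, justifying the passage from the lattice supremum to the continuous one under $\alpha\leq2s/\delta-1$, and verifying that the optimal value reproduces precisely the stated constant $C(\delta/2s)$.
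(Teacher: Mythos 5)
Your proposal is correct and follows essentially the same route as the paper: the same splitting into bias $(P^s_\alpha-\id)f$ and noise $P^s_\alpha\xrt^*g$ via $\xrt^*\xrt=\id$, the same $\eps/\alpha$ bound for the noise term, dominated convergence for the qualitative limit, and the same one-variable optimization for the bias constant (your substitution $w=\alpha\vev{k}^{2s}$ with critical point $w^\ast=2s/\delta-1$ is just a reparametrization of the paper's $F(x)=\alpha x^{2s-\delta}/(1+\alpha x^{2s})$ maximized at $x^{2s}=\alpha^{-1}(2s/\delta-1)$, and the restriction $\alpha\leq2s/\delta-1$ plays the identical role of keeping the maximizer in the admissible range $\vev{k}\geq1$). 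All the computations you flag as needing verification do go through and reproduce the stated constant $C(\delta/2s)$.
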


\begin{remark}
If we choose the regularization parameter as $\alpha=\eps^\gamma$, the optimal asymptotic rate of convergence is obtained when $\gamma=(1+\delta/2s)^{-1}$.
Then the terms~$\alpha^{\delta/2s}$ and~$\eps/\alpha$ are of equal order.
\end{remark}

We have also studied mapping properties, the adjoint and the normal operator of~$\xrt$ in propositions~\ref{prop:xrt-continuous} and~\ref{prop:adjoint}; these results are stated in section~\ref{sec:torusCTall}.
For example, it turns out that~$\xrt|_{H^s(\T^2)}$ is unitary to its range for any $s \in \R$ (see proposition~\ref{prop:adjoint}).

\subsection*{Acknowledgements}
J.I. was supported by the Academy of Finland
(decision 295853).
O.K. was supported by Jane and Aatos Erkko Foundation, and Jenny and Antti Wihuri Foundation.
J.R. was supported by the Academy of Finland (CoE in Inverse Problems Research in 2017 \& CoE in Inverse Modelling and Imaging in 2018--2019). The authors are grateful to Sampsa Pursiainen for providing access to a computer hardware used in simulations, Mikko Salo for helpful discussions, and Martin Bright for a helpful MathOverflow discussion related to proposition~\ref{prop:Schanuel}.
O.K. wishes to thank Jari Hyttinen for providing a good working environment for completing this research.
J.R. wishes to thank Matti Lassas and Samuli Siltanen for providing a good working environment at the University of Helsinki during his one-year visit in 2018. We thank the anonymous referees for their helpful comments.

\section{Torus CT method}
\label{sec:torusCTall}

In this section we will lay out the theory of the Torus CT method.
The reconstruction method is based on the Fourier series and properties of the geodesic X-ray transform on~$\T^2$.
There is a natural projection operator from the X-ray transform data of a compactly supported function on the plane to the X-ray transform data on~$\T^2$.
This so called torus-projection operator plays the role of the back-projection operator.
For more details on the geodesic X-ray transform on tori see~\cite{I15} and \cite[Chapter 3]{I17}.


\subsection{The geodesic X-ray transform on $\T^2$}
\label{sec:geodXray}

We define the \textit{flat torus} as the quotient $\T^2 := \R^2 / \Z^2$ and denote the \textit{quotient mapping} $[\cdot]\colon \R^2 \to \T^2$.  A function $f\colon \T^2 \to \C$ can be equivalently thought as a periodic function on~$\R^2$ via the quotient mapping~$[\cdot]$.
We may thus consider a function $f\colon \T^2 \to \C$ as a periodic function  on the whole~$\R^2$.

On closed Riemannian manifolds one defines the geodesic X-ray transform as a collection of line integrals of a function over periodic geodesics.
All geodesics of~$\T^2$ are given by the parametrizations $\gamma_{x,v}(t) := [x+tv], (x,v) \in [0,1]^2 \times \R^2\setminus0$.
The geodesic~$\gamma_{x,v}$ is periodic with period~$1$ (with respect to the parameter~$t$) if and only if  $(x,v) \in [0,1]^2 \times (\Z^2 \setminus 0)$ (see e.g. \cite[Exercise 23]{I17}).
In general, a geodesic is periodic on~$\T^2$ if and only if its direction vector is a multiple of a rational vector.

We denote the space of test functions by $\mathcal{T} := C^\infty(\T^2)$ and the set of all mappings $X \to Y$ by~$Y^X$. We define the \textit{(geodesic) X-ray transform on~$\T^2$} as an operator $\xrt\colon \mathcal{T} \to \R^{\T^2 \times (\Z^2 \setminus 0)}$ by
\begin{equation}
\xrt f(x,v)
:=
\int_0^1 f(\gamma_{x,v}(t))\dt,
\quad
f \in \mathcal{T},
\quad
x \in [0,1]^2 
\quad
v \in \Z^2 \setminus 0.
\end{equation}

A simple calculation shows that that $f \mapsto \xrt f(\cdot,v)$ is a formally self-adjoint operator on~$\mathcal{T}$ for any fixed $v \in \Z^2 \setminus 0$.
We denote the dual space of~$\mathcal{T}$ by~$\mathcal{T}'$, i.e. the space of \textit{distributions}. 
By formal self-adjointness of~$\xrt$, we may define the \textit{X-ray transform on distributions $f \in \mathcal{T}'$} by
\begin{equation}
[\xrt f(\cdot,v)](\eta) := (f,\xrt \eta(\cdot,v)), \quad \eta \in \mathcal{T}
\end{equation}
where $(\cdot,\cdot)$ is the duality pairing.

If $f \in \mathcal{T}'$, then we denote the Fourier coefficients of~$f$ as $\hat{f}(k) := f(e^{-2\pi i k \cdot x}), k \in \Z^2$, and the Fourier series
\begin{equation}
\label{eq:Fseries}
f(x) = \sum_{k\in\Z^2}\hat{f}(k)e^{2\pi ik\cdot x}
\end{equation}
converges in the sense of distributions. We are now ready to recall the inversion formula in~\cite{I15}:

\begin{theorem}[Fourier slice theorem on $\T^2$, Eq. (9) in~\cite{I15}]
\label{thm:ilmavirta}
If $f \in \mathcal{T}'$, then
\begin{equation}
\label{eq:recform} \widehat{\xrt f}(k,v) = \begin{cases}
\hat{f}(k) & k \cdot v = 0 \\
0 & k \cdot v \neq 0.
\end{cases}
\end{equation}
\end{theorem}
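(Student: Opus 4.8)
The plan is to compute the spatial Fourier coefficient $\widehat{\xrt f}(k,v)$ directly and reduce everything to a one-dimensional character integral along the geodesic direction. First I would treat a regular function, say $f\in L^1(\T^2)$. Writing out
\begin{equation}
\widehat{\xrt f}(k,v)
=
\int_{\T^2}\left(\int_0^1 f(x+tv)\,\dt\right)e^{-2\pi ik\cdot x}\,\dx,
\end{equation}
I would apply Fubini (justified since $(t,x)\mapsto\abs{f(x+tv)}$ is integrable on $[0,1]\times\T^2$) to exchange the integrals, and then make the translation substitution $y=x+tv$ in the inner integral. Because $\T^2$ is translation invariant and $v\in\Z^2$, this substitution maps $\T^2$ onto itself and peels off a phase factor, yielding
\begin{equation}
\widehat{\xrt f}(k,v)
=
\hat{f}(k)\int_0^1 e^{2\pi it(k\cdot v)}\,\dt.
\end{equation}

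The decisive arithmetic observation is that $k\cdot v\in\Z$, since both $k$ and $v$ are integer vectors. Hence the remaining scalar integral is that of a character of the circle over one full period: it equals $1$ when $k\cdot v=0$ and vanishes when $k\cdot v\neq0$. This already gives the claimed dichotomy for $f\in L^1(\T^2)$. To reach a general distribution $f\in\tf'$ I would pass through the duality definition. By definition $\widehat{\xrt f}(k,v)=(f,\xrt\eta(\cdot,v))$ with the test function $\eta(x)=e^{-2\pi ik\cdot x}$, so the entire computation transfers onto this single smooth $\eta$: exactly as above, $\xrt\eta(\cdot,v)$ equals $\eta$ when $k\cdot v=0$ and is identically zero when $k\cdot v\neq0$. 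Substituting into the pairing gives $(f,\eta)=\hat{f}(k)$ or $(f,0)=0$ respectively, which is precisely~\eqref{eq:recform}.

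I do not expect a genuine obstacle here; the statement is essentially one geometric computation, reflecting that averaging the Fourier mode $e^{2\pi ik\cdot x}$ along the closed geodesic direction $v$ annihilates it unless the mode is constant in that direction, i.e. unless $k\perp v$. The only points demanding care are keeping the duality pairing and complex-conjugation conventions consistent between the definition of $\hat{f}$ and that of the distributional transform, so that no spurious sign or conjugate creeps in, and recording that the character integral collapses exactly because $k\cdot v$ is an integer. Both are routine once isolated.
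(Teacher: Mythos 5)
Your argument is correct. Note, however, that the paper does not prove this statement at all: it is imported verbatim as Eq.~(9) of the cited reference~\cite{I15}, so there is no in-paper proof to compare against. Your computation is the standard one and essentially reproduces the argument of that reference: the whole content is that $\xrt_v$ applied to the character $e^{-2\pi ik\cdot x}$ produces the factor $\int_0^1 e^{-2\pi it\,k\cdot v}\,\dt$, which equals $1$ or $0$ according as the integer $k\cdot v$ vanishes or not, and the distributional case then follows immediately from the duality definition $[\xrt f(\cdot,v)](\eta)=(f,\xrt\eta(\cdot,v))$ with $\eta=e^{-2\pi ik\cdot x}$. One small remark: your preliminary $L^1$ computation is logically redundant, since the test-function calculation in your second paragraph already covers all $f\in\tf'$ (and in particular all $f\in L^1(\T^2)$); you also correctly keep the pairing bilinear rather than sesquilinear, consistent with the paper's convention $\hat f(k)=f(e^{-2\pi ik\cdot x})$, so no conjugation issues arise.
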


Theorem~\ref{thm:ilmavirta} gives a constructive formula~\eqref{eq:recform} for the inverse of the X-ray transform on~$\T^2$.

\subsection{Inversion formula for integrable functions}
\label{sec:rec}

In this section we simplify formula~\eqref{eq:recform} for functions in~$L^1(\T^2)$. It turns out that the dimension of the integral defining $\widehat{\xrt f}(k,v)$ can be decreased by one using a change of coordinates, which enables a computationally faster implementation.

Recall that $f \in \mathcal{T}'$ is in~$L^1(\T^2)$ if there exists a function $\tilde{f} \in L^1(\T^2)$ such that \begin{equation}(f,\phi) = \int_{\T^2} \tilde{f}\phi dm,\quad \forall \phi \in C^\infty(\T^2).\end{equation} It holds that $L^1(\T^2) \subset \mathcal{T}'$. If $\xrt f(\cdot,v) \in L^1(\T^2)$ for some $f \in \mathcal{T}'$, then we simply denote that $\xrt_vf =\xrt f(\cdot,v)$.

We define a family of coordinates which will be used repeatedly in this subsection. Suppose that $v \in \Z^2 \setminus 0$ and $v_1, v_2 \neq 0$. Let $w_m := \frac{m}{\abs{v_1}}v, m\in \Z$, and define the coordinates $\phi_{v,m}$ on~$\T^2$ as
\begin{equation}
\label{eq:coords} 
\phi_{v,m}(a,b) = a\frac{v}{\abs{v_2}} + (0,b) + w_m, \quad a \in [0,\abs{\frac{v_2}{v_1}}), b \in [0,1).
\end{equation}
Notice that $a\frac{v}{\abs{v_2}} + (0,b) = (a\frac{v_1}{\abs{v_2}},\frac{v_2}{\abs{v_2}}(a+b))$ and $w_m = m(\frac{v_1}{\abs{v_1}},\frac{v_2}{\abs{v_1}})$ in the Cartesian coordinates. It easily follows that the Lebesgue measure on~$\T^2$ transforms as $dm = \abs{\frac{v_1}{v_2}} d(a,b)$ where $d(a,b)$ denotes the Lebesgue measure on $X := [0,\abs{\frac{v_2}{v_1}}) \times [0,1)$. 

\begin{remark}
The coordinates~$\phi_{v,m}$ parametrize~$\T^2$ as parallelograms which are located on~$\R^2$.
Moreover, the parallelograms associated with $\phi_{v,m},{m\in\Z}$ are disjoint for a fixed $v \in \Z^2 \setminus 0$ when looked on~$\R^2$.
An example is given on Figure~\ref{fig:coords}.
\end{remark}

\begin{figure}
  \includegraphics[width=0.3\textwidth]{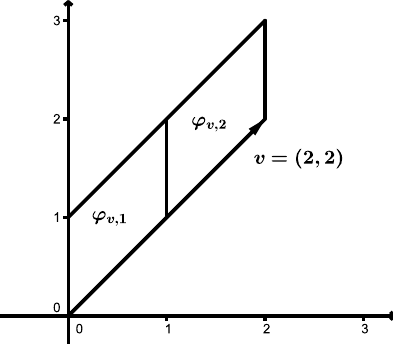}
  \caption{Parallelograms associated to the coordinates $\phi_{v,m}$ when $v = (2,2)$ and $m = 0,1$.}
  \label{fig:coords}
\end{figure}

The next lemma states that the geodesic X-ray transform of an ~$L^1(\T^2)$ function can be defined geodesic-wise for almost every closed geodesic. Furthermore, the X-ray data for any fixed direction is also~$L^1(\T^2)$, and this definition agrees with the distributional definition.

\begin{lemma}
\label{lem:L1range}
Suppose that $v \in \Z^2 \setminus 0$. Then the X-ray transform $\xrt_v\colon L^1(\T^2) \to L^1(\T^2)$ can be defined by the formula
\begin{equation}
\label{eq:L1formula}
\xrt_vf(p):= \int_0^1 f(p+tv) dt\quad \text{for a.e. $p \in\T^2$}.
\end{equation}
Moreover, we have:
\begin{enumerate}
\item This definition coincides with the distributional definition; for every $f \in L^1(\T^2)$ and $g \in L^\infty(\T^2)$ it holds that $(\xrt_v f,g) = (f,\xrt_v g)$.
\item $\xrt_v\colon L^1(\T^2) \to L^1(\T^2)$ is Lipschitz continuous with Lipschitz constant~$1$.
\item \label{itm:prop3} For almost every $p \in \T^2$ and every $v \in \Z^2 \setminus 0$ and  $t \in \R$ it holds that $\xrt_vf(p) = \xrt_vf(p+tv)$.
\end{enumerate}
\end{lemma}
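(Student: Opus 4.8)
The plan is to reduce the entire statement to Fubini's theorem combined with two elementary facts about the flat torus: the Lebesgue measure~$m$ is invariant under the translations $p \mapsto p+tv$, and, because $v \in \Z^2$, the map $t \mapsto f(p+tv)$ is $1$-periodic for every torus function~$f$. Once well-definedness is settled, each of the three items falls out of one of these facts.

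First I would fix a Borel representative~$\tilde f$ of $f \in L^1(\T^2)$ and consider the continuous map $\Phi(p,t)=p+tv$ on $\T^2\times[0,1]$. The composition $\tilde f\circ\Phi$ is Borel, so Tonelli gives
\[
\int_{\T^2}\int_0^1\abs{\tilde f(p+tv)}\dt\,dm(p)
=\int_0^1\Bigl(\int_{\T^2}\abs{\tilde f(p+tv)}dm(p)\Bigr)\dt
=\norm{f}_{L^1(\T^2)},
\]
using translation invariance in the inner integral. Hence $\int_0^1\abs{\tilde f(p+tv)}\dt<\infty$ for a.e.~$p$, so \eqref{eq:L1formula} defines $\xrt_v f(p)$ as an absolutely convergent integral almost everywhere, and the same computation without absolute values shows $\xrt_v f\in L^1(\T^2)$ with $\norm{\xrt_v f}_{L^1}\le\norm{f}_{L^1}$. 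The one point needing care is independence of the representative: if $N$ is null, then $\Phi^{-1}(N)$ is null because each slice $\{p:p+tv\in N\}$ equals $N-tv$ and has measure $m(N)=0$, so changing $\tilde f$ on~$N$ alters the integrand only on a product set whose fibers are a.e.\ null, leaving $\xrt_v f$ unchanged a.e. This measurability/representative bookkeeping is the main (though routine) obstacle; alternatively one can make Fubini manifest by passing to the coordinates $\phi_{v,m}$, in which integration along~$v$ becomes integration in a single coordinate.

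The norm bound just proved, combined with linearity of~$\xrt_v$, immediately yields item~(2): $\norm{\xrt_v f-\xrt_v g}_{L^1}=\norm{\xrt_v(f-g)}_{L^1}\le\norm{f-g}_{L^1}$. For item~(1) I would compute $(\xrt_v f,g)$ for $g\in L^\infty(\T^2)$ by inserting \eqref{eq:L1formula}, applying Fubini (justified since $\abs{f}\,\abs{g}\in L^1$ with $g$ bounded), and substituting $q=p+tv$ in the inner integral. Translation invariance turns this into $\int_{\T^2}f(q)\bigl(\int_0^1 g(q-tv)\dt\bigr)dm(q)$, and since $g$ is $\Z^2$-periodic the bracket equals $\xrt_v g(q)$ after the change $t\mapsto 1-t$; this gives $(\xrt_v f,g)=(f,\xrt_v g)$, which is exactly the distributional definition by the formal self-adjointness of~$\xrt$.

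Finally, for item~(3) I would use periodicity directly: for a.e.~$p$,
\[
\xrt_v f(p+tv)=\int_0^1 f(p+(s+t)v)\,ds=\int_t^{t+1}f(p+uv)\,du=\int_0^1 f(p+uv)\,du=\xrt_v f(p),
\]
where the last equality holds because $u\mapsto f(p+uv)$ is $1$-periodic (as $v\in\Z^2$), so its integral over any unit interval is the same. I expect no genuine difficulty beyond the measurability point flagged above; in particular, the degenerate directions $v_1=0$ or $v_2=0$ need no separate treatment in this approach, since the translation-invariance argument never uses $v_1,v_2\neq0$.
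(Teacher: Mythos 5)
Your proof is correct and complete. The paper itself gives no argument here: its entire ``proof'' of the lemma is the sentence that the claims follow from Fubini's theorem and straightforward calculations in the coordinates~$\phi_{v,m}$, with the details omitted. Your route is the same in spirit (everything rests on Fubini/Tonelli), but you replace the explicit coordinate computation with translation invariance of the Lebesgue measure on~$\T^2$ together with the $1$-periodicity of $t\mapsto f(p+tv)$ for $v\in\Z^2$. This buys you two things the paper's sketched route does not give for free: the coordinates~$\phi_{v,m}$ are only defined when $v_1,v_2\neq0$, so the axis directions would need a separate (if trivial) case, whereas your argument is uniform in~$v$; and you explicitly handle the representative-independence point (null sets pull back to null sets under $(p,t)\mapsto p+tv$), which is exactly the ``routine bookkeeping'' the paper sweeps under the rug. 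Your verifications of items (1)--(3) via the substitution $q=p+tv$, linearity plus the $L^1$ contraction bound, and periodicity of the integral over unit intervals are all sound, including the observation that the good set of points~$p$ is invariant under $p\mapsto p+tv$, which is needed for item~(3) to make sense for \emph{every}~$t$.
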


\begin{proof}
This follows from Fubini's theorem and straightforward calculations using the coordinates~$\phi_{v,m}$. We omit the details.
\end{proof}

We will give two proofs for theorem~\ref{thm:invfor}.
The first proof is based on the assumption that $f \in L^1(\T^2)$ and straightforward computation of the Fourier coefficients.
The first proof proves the injectivity of the X-ray transform on~$\T^2$ for~$L^1(\T^2)$ functions independently of~\cite{I15}.
The second proof is based on formula~\eqref{eq:recform} and the assumption that $\xrt f(\cdot,v) \in L^1(\T^2)$.
Both of the proofs involve the coordinates~$\phi_{v,k}$.

\begin{proof}[First proof of theorem~\ref{thm:invfor}]
Recall that
\begin{equation}
\label{eq:fourcoef}
\hat{f}(k) = \int_0^1 \int_0^1 f(x,y)\exp(-2\pi i k \cdot (x,y))dxdy.
\end{equation}
If $k_1 = 0$ or $k_2 = 0$, then formulas~\eqref{eq:rec1} and~\eqref{eq:rec0} follow trivially from~\eqref{eq:fourcoef}. 

\textit{The case $k_1,k_2 \neq 0$.} We can use the coordinates $\phi_{v,m}, m \in \Z$, defined by formula~\eqref{eq:coords}. Using these coordinates we can calculate
\begin{equation}\hat{f}(k) = \int_0^1\int_0^{\abs{v_2/v_1}} f(\phi_{v,m}(a,b))\exp\left(-2 \pi i k \cdot \left(\frac{a}{\abs{v_2}}v+(0,b)+w_m\right)\right)\abs{\frac{v_1}{v_2}}dadb.\end{equation} Notice that $k \cdot \left(\frac{a}{\abs{v_2}}v+(0,b)+w_m\right) = k_2b$ since $v \cdot k = w_m \cdot k = 0$.

Hence, we have
\begin{equation}
\label{eq:fcoefs}
\hat{f}(k) = \abs{\frac{v_1}{v_2}}\int_0^1 \int_0^{\abs{v_2/v_1}} f\left(a\frac{v}{\abs{v_2}} + (0,b) +w_m\right)da \exp(- 2 \pi ik_2b)db.
\end{equation}
We sum formula~\eqref{eq:fcoefs} for the values $m = 0,\dots,\abs{v_1}-1$, which gives
\begin{equation}\abs{v_1} \hat{f}(k) = \abs{v_1}\int_0^1 \xrt_vf(0,y) \exp(-2\pi ik_2y)dy.
\end{equation}
This completes the proof.
\end{proof}

We will next prove a more general version of theorem~\ref{thm:invfor}.

\begin{theorem}
\label{thm:invforL1data}
Suppose that $f \in \mathcal{T}'$ and $\xrt f(\cdot,v) \in L^1(\T^2)$ for any $v\in \Z^2 \setminus 0$. Then formulas~\eqref{eq:rec1} and~\eqref{eq:rec0} are true.
\end{theorem}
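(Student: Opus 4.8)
The plan is to use the Fourier slice theorem (theorem~\ref{thm:ilmavirta}) rather than computing the Fourier coefficients of~$f$ from scratch; this is exactly what shortens the argument and lets us drop the integrability assumption on~$f$ itself. Since $v\bot k$, formula~\eqref{eq:recform} gives $\hat f(k)=\widehat{\xrt f}(k,v)$, and because $\xrt_vf\in L^1(\T^2)$ by hypothesis this Fourier coefficient is an honest integral, $\widehat{\xrt f}(k,v)=\int_{\T^2}\xrt_vf(x,y)\exp(-2\pi ik\cdot(x,y))\,dm$. The whole proof then amounts to collapsing this two-dimensional integral down to the one-dimensional integrals in~\eqref{eq:rec1} and~\eqref{eq:rec0}.

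For the main case $k_1,k_2\neq0$ --- note that $v\bot k$ with $v\neq0$ forces $v_1,v_2\neq0$ here, so the coordinates~\eqref{eq:coords} are available --- I would change variables by a single $\phi_{v,m}$, which already parametrizes all of~$\T^2$. Two facts make the $a$-integration trivial. First, $k\cdot\phi_{v,m}(a,b)=k_2b$, because $k\bot v$ and $k\bot w_m$. Second, $\xrt_vf(\phi_{v,m}(a,b))=\xrt_vf(0,b)$ for almost every $(a,b)$, since $\phi_{v,m}(a,b)$ and $(0,b)$ differ by a multiple of~$v$ and $\xrt_vf$ is invariant along~$v$ by property~\ref{itm:prop3} of lemma~\ref{lem:L1range}. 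The integrand therefore does not depend on~$a$, so integrating over $a\in[0,\abs{v_2/v_1})$ contributes a factor $\abs{v_2/v_1}$ that cancels the Jacobian $\abs{v_1/v_2}$, leaving precisely $\int_0^1\xrt_vf(0,b)\exp(-2\pi ik_2b)\,db$, which is~\eqref{eq:rec1}.

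The degenerate directions are handled directly and need no coordinate change. If $k_1=0$ (so $k_2\neq0$), orthogonality forces $v_2=0$, hence $\xrt_vf$ is independent of the first variable; the factor $\int_0^1\exp(0)\,dx=1$ collapses $\widehat{\xrt f}(k,v)$ to $\int_0^1\xrt_vf(0,y)\exp(-2\pi ik_2y)\,dy$, and the case $k_2=0$ is symmetric. For $k=0$ we have $0\cdot v=0$ for every~$v$, so the slice theorem applies with $v=(1,0)$ and with $v=(0,1)$; using once more that $\xrt_{(1,0)}f$ and $\xrt_{(0,1)}f$ are constant along their own directions recovers the two equalities in~\eqref{eq:rec0}.

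The only genuinely technical points --- the legitimacy of the change of variables and the almost-everywhere identity $\xrt_vf(\phi_{v,m}(a,b))=\xrt_vf(0,b)$ --- are already supplied by lemma~\ref{lem:L1range} under the hypothesis $\xrt_vf\in L^1(\T^2)$, so no integrability of~$f$ is needed. Thus the main obstacle is the bookkeeping of the coordinate factors rather than any real analytic difficulty, and the argument is indeed simpler than the first proof of theorem~\ref{thm:invfor}.
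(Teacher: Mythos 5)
Your argument is correct and follows essentially the same route as the paper's proof: apply the Fourier slice theorem to reduce to computing $\widehat{\xrt_vf}(k)$, change variables via the coordinates~\eqref{eq:coords}, and use the invariance $\xrt_vf(p+tv)=\xrt_vf(p)$ from lemma~\ref{lem:L1range} to collapse the $a$-integral against the Jacobian. The only difference is that you spell out the degenerate cases and the cancellation of the factor $\abs{v_2/v_1}$ explicitly, which the paper leaves as "trivial" and "calculations similar to the first proof."
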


\begin{proof}
We only show how to argue if $k_1, k_2 \neq 0$ since the other special cases are trivial. Recall that the inversion formula~\eqref{eq:recform} states that $\widehat{\xrt_vf}(k) = \hat{f}(k)$ for any $v \in \Z^2 \setminus 0$ such that $k \bot v$. We apply the coordinates~$\phi_{v,0}$. 

Using Fubini's theorem and calculations similar to the first proof of theorem~\ref{thm:invfor}, we get
\begin{equation}\widehat{\xrt_vf}(k) = \abs{\frac{v_1}{v_2}}\int_0^1\int_0^{\abs{v_2/v_1}} \xrt_v f(\phi_{v,0}(a,b))da\exp(-2 \pi i k_2 b)db.\end{equation} Now, formula~\eqref{eq:rec1} follows from property~\eqref{itm:prop3} of lemma~\ref{lem:L1range}. This completes the proof.
\end{proof}

\begin{proof}[Second proof of theorem~\ref{thm:invfor}] Lemma~\ref{lem:L1range} implies that $\xrt_vf \in L^1(\T^2)$ if $f \in L^1(\T^2)$. Hence, theorem~\ref{thm:invforL1data} implies the inversion formulas.
\end{proof}

\subsection{The torus-projection operator}
\label{sec:red}

We denote the X-ray transform of $f\colon \R^2 \to \C$ by
$\mathcal{R}_vf(p)$ for any $(p,v) \in \R^2 \times S^1$.
We parametrize the lines of the plane so that
\begin{equation}
\mathcal{R}_vf(p)
=
\int_\R f(p+tv)dt.
\end{equation}
Suppose that~$f$ is a compactly supported function on~$\R^2$.
We may then consider~$f$ as a function defined on~$\T^2$ after rescaling and periodizing.
Let us denote the periodic extension of~$f$ into~$\T^2$ by the same symbol~$f$.

Suppose further that $f \in C(\T^2)$.
As described in \cite[Lemma 3.1]{I17}, for any $p \in \T^2$ and $v \in \Z^2 \setminus 0$ one can write $\xrt_vf(p)$ as a finite sum of terms $\mathcal{R}_{v/\abs{v}}f(p_i)$, $i =1,\dots,m$, {and rescaling data by $\abs{v}^{-1}$.}
One simply has to write any periodic geodesic~$\gamma$ of~$\T^2$ as a finite disjoint union of line segments that are supported in $[0,1) \times [0,1)$ and travel from the boundary to the boundary in the fundamental domain of~$\T^2$.
However, such unions are tedious to write down rigorously.
This procedure defines the torus-projection operator $\mathcal{R}f \mapsto \xrt f$ for compactly supported continuous functions $f\colon\R^2\to\C$. {We give example on Figure~\ref{fig:torusproj} which illustrates this procedure.}
For further details, see \cite[Chapter 3]{I17}.
See also the description of our numerical implementation in section~\ref{sec:t2forward}. {We have not studied mapping properties of the torus-projection operator, but we would like to see it studied thoroughly in the future. This would give new and important information about the connection between Euclidean and periodic X-ray transforms.}

\begin{figure}
\includegraphics[width=0.4\textwidth]{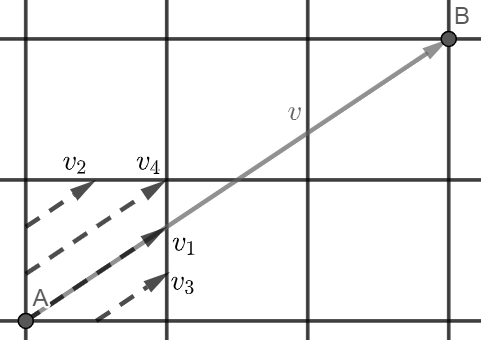}
\caption{{The closed geodesic on $\T^2$ from $A$ to $B$ with $[A] = [B]$, determined by the vector $v = (3,2)$ and starting from $A$, can be presented as the disjoint union of the geodesic segments $v_1$, $v_2$, $v_3$ and $v_4$. If one knows the X-ray transform over these four line segments on $\R^2$, then one can calculate the X-ray transform over the corresponding geodesic on $\T^2$.}}
\label{fig:torusproj}
\end{figure}

\subsection{Sobolev spaces, adjoint, normal operator and regularization}
\label{sec:adj}

Let $Q\subset\Z^2$ be such that every nonzero $v\in\Z^2$ is an integer multiple of a unique element in~$Q$.
We can simply take~$Q$ to be the set of those vectors $(a,b)$ for which~$a$ and~$b$ are coprime with $a>0$ and $b\neq0$ and the vectors $(0,1)$ and $(1,0)$.
The set~$Q$ is the set of all periodic directions on the torus, with all multiple counts removed.
This set can be naturally identified with the projective space~$\mathbb P^1$ defined later.

The X-ray transform we study takes a function on~$\T^2$ to a function on $\T^2\times Q$.
To set things up properly, we need to define function spaces and norms on both sides.
On~$\T^2$, we use the standard Sobolev scale of spaces~$H^s(\T^2)$ with the norms
\begin{equation}
\aabs{f}_{H^s(\T^2)}^2
=
\sum_{k\in\Z^2}
\vev{k}^{2s}\abs{\hat f(k)}^2,
\end{equation}
where $\vev{k}=(1+\abs{k}^2)^{1/2}$ as usual.
On $\T^2\times Q$, we define the spaces $H^{s}(\T^2\times Q)$ to be the set of functions $g\colon\T^2\times Q\to\C$ for which
\begin{enumerate}[(i)]
\item  $g(\cdot,v)\in H^s(\T^2)$ for every $v\in Q$;
\item {the average of every $g(\cdot,v)$ over $\T^2$ is the same; and}
\item the norm
\begin{equation}
\aabs{g}_{H^{s}(\T^2\times Q)}^2
=
\abs{\hat g(0,0)}^2+
\sum_{k\in\Z^2\setminus0}
\sum_{v\in Q}
\vev{k}^{2s}\abs{\hat g(k,v)}^2
\end{equation}
is finite.
We set $v=0$ for the Fourier term $k=0$ to emphasize that it is the same for every $v\in Q$.
We remind the reader that $0\notin Q$.
\end{enumerate}

We emphasize that the regularity parameter~$s$ can be any real number in the theory presented in this section.
By setting $s=0$ one obtains a theory in $H^0=L^2$.
We point out that the spaces considered here are different from~\cite{I15}.

\begin{remark}
The norm of $g\in H^s(\T^2\times Q)$ is essentially an~$\ell^2(Q)$ norm of the~$H^s(\T^2)$ norms of the functions $g(\cdot,v)$.
This~$\ell^2$ can be replaced with any~$\ell^p$ without much effect to the theory, as the different functions in the family indexed by~$Q$ have disjointly supported Fourier series apart from the origin.
The case $p=\infty$ is particularly convenient because then special considerations are not needed at $k=0$.
We choose $p=2$ to stay in a Hilbert space setting.
\end{remark}

We denote $v^\perp=(-v_2,v_1)$ for any $v=(v_1,v_2)\in\Z^2$.
For $v\in\Z^2\setminus0$, we denote by~$\hat v$ the unique point in~$Q$ that is parallel to~$v$.
We can define~$\hat 0$ to be any point in~$Q$; this choice will not matter.
To keep notation neater, we will write~$\hat v^\perp$ instead of~$\widehat{v^\perp}$.

\begin{proposition}
\label{prop:xrt-continuous}
The X-ray transform is continuous $H^s(\T^2)\to H^s(\T^2\times Q)$ for any $s\in\R$.
\end{proposition}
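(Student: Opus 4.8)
The plan is to prove continuity by computing the norm of $\xrt f$ directly from the Fourier slice theorem and comparing it term-by-term to the norm of $f$. By theorem~\ref{thm:ilmavirta}, the Fourier coefficients of $\xrt f$ are completely explicit: $\widehat{\xrt f}(k,v)=\hat f(k)$ when $k\cdot v=0$ and $\widehat{\xrt f}(k,v)=0$ otherwise. For a fixed nonzero $k\in\Z^2$, the condition $k\cdot v=0$ with $v\in Q$ is satisfied by exactly one direction, namely $v=\hat k^\perp$, since $Q$ contains precisely one representative of each periodic direction and $k^\perp$ is the unique (up to scaling) direction orthogonal to $k$. Thus for each $k\neq0$ the double sum over $Q$ in the definition of $\aabs{\cdot}_{H^s(\T^2\times Q)}^2$ collapses to a single surviving term.

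Concretely, I would write
\begin{equation}
\aabs{\xrt f}_{H^s(\T^2\times Q)}^2
=
\abs{\widehat{\xrt f}(0,0)}^2
+
\sum_{k\in\Z^2\setminus0}
\sum_{v\in Q}
\vev{k}^{2s}\abs{\widehat{\xrt f}(k,v)}^2
=
\abs{\hat f(0)}^2
+
\sum_{k\in\Z^2\setminus0}
\vev{k}^{2s}\abs{\hat f(k)}^2,
\end{equation}
where the last equality uses that for each $k\neq0$ the only nonzero summand is the one at $v=\hat k^\perp$, contributing $\vev{k}^{2s}\abs{\hat f(k)}^2$, and that the $k=0$ term is $\abs{\hat f(0)}^2$ (matching condition (ii), since $\widehat{\xrt f}(0,v)=\hat f(0)$ is independent of $v$). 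The right-hand side is exactly $\aabs{f}_{H^s(\T^2)}^2$, so in fact
\begin{equation}
\aabs{\xrt f}_{H^s(\T^2\times Q)}
=
\aabs{f}_{H^s(\T^2)},
\end{equation}
which gives continuity (indeed with operator norm $1$) for every $s\in\R$.

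The only genuine points requiring care are verifying that $\xrt f$ actually lands in $H^s(\T^2\times Q)$, i.e. that conditions (i)--(iii) of the definition hold, not merely that the norm formula is valid. Condition (iii) is the finiteness just established. Condition (ii) holds because the average of $\xrt f(\cdot,v)$ over $\T^2$ is its zeroth Fourier coefficient $\widehat{\xrt f}(0,v)=\hat f(0)$, which is the same for all $v$. Condition (i) follows since for fixed $v$ the nonzero coefficients $\widehat{\xrt f}(k,v)=\hat f(k)$ (those with $k\perp v$) form a subset of the coefficients of $f$, so $\aabs{\xrt f(\cdot,v)}_{H^s(\T^2)}\leq\aabs{f}_{H^s(\T^2)}<\infty$.

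The main obstacle — really the only subtlety — is the indexing/bijection argument: I must confirm that as $k$ ranges over $\Z^2\setminus0$, the surviving direction $v=\hat k^\perp$ together with the value $\hat f(k)$ accounts for every admissible $(k,v)$ pair exactly once, so that no cross terms or double-counting corrupt the sum. This amounts to the observation that the map $k\mapsto\hat k^\perp$ records, for each frequency, the single direction in $Q$ perpendicular to it, and conversely each $v\in Q$ receives exactly the frequencies lying on the line $\Z^2\cap v^\perp$; since these lines (through the origin, one per direction in $Q$) partition $\Z^2\setminus0$, the reorganization of the sum is a genuine bijection and the identity is exact. Once this bookkeeping is pinned down, the proposition is immediate from the Fourier slice theorem.
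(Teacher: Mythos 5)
Your proof is correct and follows essentially the same route as the paper's: both invoke the Fourier slice theorem to see that for each nonzero frequency $k$ exactly one direction $v=\hat k^\perp\in Q$ contributes, so that the double sum collapses along the partition of $\Z^2\setminus0$ into punctured lines indexed by $Q$, yielding the isometry $\aabs{\xrt f}_{H^s(\T^2\times Q)}=\aabs{f}_{H^s(\T^2)}$. Your explicit verification of conditions (i)--(iii) and of the bijection in the reindexing just spells out details the paper leaves as "one can easily verify."
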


\begin{proof}
For any $v\in Q$, the Fourier transform of function $\xrt f(\cdot,v)$ is supported on the line~$v^\perp\Z$ by theorem~\ref{thm:ilmavirta}.
In fact, it is the restriction of~$\hat f$ to this line.
It then follows easily from the definition of the Sobolev norm on the Fourier side that $\xrt f(\cdot,v)\in H^s(\T^2)$ whenever $f\in H^s(\T^2)$.

It follows from the same theorem that $\widehat{\xrt f}(0,v)=\hat f(0)$ for all $v\in Q$, and so all the averages agree as required.

Since~$\Z^2$ is a disjoint union of the origin and the punctured lines $v\Z\setminus0$ with $v\in Q$, one can easily verify that $\aabs{\xrt f}_{H^s(\T^2\times Q)}=\aabs{f}_{H^s(\T^2)}$.
\end{proof}

\begin{proposition}
\label{prop:adjoint}
Fix any $s\in\R$.
The adjoint of $\xrt\colon H^s(\T^2)\to H^s(\T^2\times Q)$ is $\xrt^*\colon H^s(\T^2\times Q)\to H^s(\T^2)$ given by
\begin{equation}
\widehat{\xrt^*g}(k)
=
\hat g(k,\hat k^\perp).
\end{equation}
The normal operator $\xrt^*\xrt\colon H^s(\T^2)\to H^s(\T^2)$ is the identity, so that~$\xrt$ is unitary to its range.
\end{proposition}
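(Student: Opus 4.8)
The plan is to verify the proposed formula directly on the Fourier side. By proposition~\ref{prop:xrt-continuous} the operator $\xrt\colon H^s(\T^2)\to H^s(\T^2\times Q)$ is bounded between Hilbert spaces, so its adjoint exists and is unique; it therefore suffices to check that the candidate operator defined by $\widehat{\xrt^*g}(k)=\hat g(k,\hat k^\perp)$ maps into $H^s(\T^2)$ and satisfies the defining relation $\ip{\xrt f}{g}_{H^s(\T^2\times Q)}=\ip{f}{\xrt^*g}_{H^s(\T^2)}$ for all $f\in H^s(\T^2)$ and $g\in H^s(\T^2\times Q)$. That the candidate lands in $H^s(\T^2)$ is immediate, since each $g(\cdot,v)\in H^s(\T^2)$ and a term-by-term comparison of the two norms (using condition~(ii) at the origin) bounds $\aabs{\xrt^*g}_{H^s(\T^2)}$ by $\aabs{g}_{H^s(\T^2\times Q)}$.

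First I would expand the left-hand inner product using the Fourier slice theorem~\eqref{eq:recform}. The crucial combinatorial observation is that for each fixed $k\in\Z^2\setminus0$ there is exactly one direction $v\in Q$ with $k\cdot v=0$, namely the primitive vector $\hat k^\perp$ parallel to $k^\perp$; this is precisely what the defining property of~$Q$ guarantees. Consequently $\widehat{\xrt f}(k,v)$ vanishes for every $v\in Q$ except $v=\hat k^\perp$, where it equals $\hat f(k)$, so the inner sum over~$Q$ collapses to a single term. The $k=0$ contribution is handled separately by condition~(ii) together with the convention $v=0$ there. Carrying this out yields
\[
\ip{\xrt f}{g}_{H^s(\T^2\times Q)}
=
\hat f(0)\overline{\hat g(0,0)}
+
\sum_{k\in\Z^2\setminus0}
\vev{k}^{2s}\hat f(k)\overline{\hat g(k,\hat k^\perp)},
\]
which is exactly $\ip{f}{\xrt^*g}_{H^s(\T^2)}$ for the claimed $\xrt^*$. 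I would emphasize that the weights $\vev{k}^{2s}$ appear identically on both sides and simply cancel, which is the structural reason the adjoint formula is independent of~$s$.

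For the normal operator I would compose the two formulas. Since $\hat k^\perp$ is parallel to $k^\perp$ we have $k\cdot\hat k^\perp=0$ for every $k$, so the Fourier slice theorem gives $\widehat{\xrt^*\xrt f}(k)=\widehat{\xrt f}(k,\hat k^\perp)=\hat f(k)$; thus $\xrt^*\xrt=\id$. Finally, proposition~\ref{prop:xrt-continuous} shows $\xrt$ is an isometry onto its range, and combined with $\xrt^*\xrt=\id$ this identifies the restriction of~$\xrt^*$ to the range as the inverse isometry, so $\xrt$ is unitary onto its range.

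The computations here are routine; the only points demanding care are the bookkeeping at the origin (reconciling the $k=0$ term with the single-average condition~(ii) and the convention $v=0$) and the verification that~$Q$ supplies a \emph{unique} admissible direction $\hat k^\perp$ for each $k\neq0$. I expect the latter, together with tracking complex conjugates correctly in the Hermitian $H^s$ inner products, to be the main thing to get right, though neither is a serious obstacle.
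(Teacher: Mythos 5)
Your proposal is correct and follows essentially the same route as the paper: expand the $H^s(\T^2\times Q)$ inner product on the Fourier side, use the Fourier slice theorem to collapse the sum over $Q$ to the single direction $\hat k^\perp$ orthogonal to $k$, read off the adjoint, and observe that $\xrt^*\xrt=\id$ follows immediately from the same formula. The extra details you supply (uniqueness of the orthogonal direction in $Q$, the bookkeeping at $k=0$, and the mapping property of the candidate adjoint) are points the paper leaves implicit, but they do not constitute a different argument.
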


\begin{remark}
We emphasize that there is a striking difference with the usual Euclidean X-ray transform, where the normal operator is a convolution.
In our setup the X-ray transform is directly inverted by its adjoint operator without any filtering or post-processing.
\end{remark}

\begin{proof}[Proof of proposition~\ref{prop:adjoint}]
Let us take any two functions $f\in H^s(\T^2)$ and $g\in H^s(\T^2\times Q)$. We denote the complex conjugate of $z \in \C$ as~$z^*$.
Theorem~\ref{thm:ilmavirta} shows that $\widehat{\xrt f}(k,v)=\hat f(k)\delta_{0,k\cdot v}$, and so the~$H^s$ inner products satisfy
\begin{equation}
\begin{split}
\iip{\xrt f}{g}
&=
\widehat{\xrt f}(0,0)^*\hat g(0,0)
+
\sum_{k\in\Z^2\setminus0}
\sum_{v\in Q}
\vev{k}^{2s}
\widehat{\xrt f}(k,v)^*\hat g(k,v)
\\&=
\sum_{k\in\Z^2}
\vev{k}^{2s}
\hat f(k)^*\hat g(k,\hat k^\perp)
\\&=
\iip{f}{\xrt^*g}.
\end{split}
\end{equation}
Therefore, the operator~$\xrt^*$ defined above is the adjoint of~$\xrt$.

It follows directly from the formula of theorem~\ref{thm:ilmavirta} that~$\xrt^*$ is a left inverse of~$\xrt$.
\end{proof}

\begin{remark}
The X-ray transform or its normal operator have no effect on regularity.
In the usual formulation, the normal operator does increase the smoothness index~$s$, but when everything is set up on~$\T^2$ the operators leave the regularity level intact.
\end{remark}

We now turn to regularized inversion, and solve the Tikhonov minimization problem~\eqref{eq:reg-min}. We will make use of the post-processing operator~$P^s_\alpha$, which is the Fourier multiplier $(1+\alpha\vev{k}^{2s})^{-1}$.
It is evident that~$P^s_\alpha$ maps continuously $H^r(\T^2)\to H^{r+2s}(\T^2)$ for any $s,r\in\R$.

\begin{proof}[Proof of theorem~\ref{thm:regularization}]
We begin with expanding the norms along lines given by~$Q$ on the Fourier side.
We have
\begin{equation}
\aabs{f}^2_{H^s(\T^2)}
=
\abs{\hat f(0)}^2
+
\sum_{v\in Q}\sum_{p\in\Z\setminus0}
\vev{pv}^{2s}\abs{\hat f(p v^\perp)}^2
\end{equation}
and
\begin{equation}
\aabs{\xrt f-g}_{H^r(\T^2\times Q)}^2
=
\abs{\widehat{\xrt f}(0,0)-\hat g(0,0)}^2
+
\sum_{v\in Q}A_v,
\end{equation}
where
\begin{equation}
\begin{split}
A_v
&=
\sum_{p\in\Z\setminus0}
\vev{pv}^{2r}\abs{\widehat{\xrt f}(p v^\perp,v)-\hat g(pv^\perp,v)}^2
\\&\quad+
\sum_{w\in\Z^2\setminus v\Z}
\vev{w}^{2r}\abs{\widehat{\xrt f}(w,v)-\hat g(w,v)}^2.
\end{split}
\end{equation}
Each $\widehat{\xrt f}(w,v)$ vanishes in the last sum by theorem~\ref{thm:ilmavirta}.
Therefore, the second sum of~$A_v$ is independent of~$f$ and can be left out of the minimization problem.
Furthermore, $\widehat{\xrt f}(p v^\perp,v)=\hat f(pv^\perp)$.

Thus, we are left with minimizing
\begin{equation}
\begin{split}
&\abs{\widehat{\xrt f}(0,0)-\hat g(0,0)}^2
+
\alpha\abs{\hat f(0)}^2
\\&+
\sum_{v\in Q}\sum_{p\in\Z\setminus0}
\left(
\vev{pv}^{2r}\abs{\hat f(p v^\perp)-\hat g(pv^\perp,v)}^2
+
\alpha\vev{pv}^{2s}\abs{\hat f(p v^\perp)}^2
\right).
\end{split}
\end{equation}
The notation introduced above allows us to rewrite the minimized quantity as
\begin{equation}
\label{eq:reg-sum}
\sum_{k\in\Z^2}
\vev{k}^{2r}
\left(
\abs{\hat f(k)-\hat g(k,\hat k^\perp)}^2
+
\alpha\vev{k}^{2(s-r)}\abs{\hat f(k)}^2
\right),
\end{equation}
and this can be minimized explicitly.

It suffices to choose each~$\hat f(k)$ so that the term in the parentheses of~\eqref{eq:reg-sum} is minimized.
A straightforward computation shows that the minimal~$\hat f(k)$ is
\begin{equation}
\hat f(k)
=
(1+\alpha\vev{k}^{2(s-r)})^{-1}
\hat g(k,\hat k^\perp).
\end{equation}
That is, the minimizer we sought is $f=P_\alpha^{s-r}\xrt^*g$.
Finally, by the mapping properties of~$P_\alpha^{s-r}$ and~$\xrt^*$ we have $f\in H^{2s-r}(\T^2)$.
This implies that~$f$ is in the correct space~$H^r(\T^2)$ since we assumed $s\geq r$.
\end{proof}

\begin{remark}
Choosing $r=0$ and $s=1$, we reconstruct a function in~$L^2$ with an~$H^1$ penalty term.
If we want the penalty to be the~$L^2$ norm of the gradient without the~$L^2$ norm of the function, the Fourier multiplier in the penalty term is changed from~$\vev{k}^2$ to~$\abs{k}^2$.
This corresponds to changing the Sobolev norm to a homogeneous Sobolev norm.
Such changes lead to similar results but with slightly different postprocessing operator.
\end{remark}

\subsection{Regularization strategy}
\label{sec:regstrat}

We define the concept of a regularization strategy according to \cite{EHN96,K11}. Let~$X$ and~$Y$ be subsets of Banach spaces and $F\colon X \to Y$ a continuous mapping. A family of continuous maps $\mathcal{R}_\alpha\colon Y \to X$ with $\alpha \in (0,\alpha_0]$ is called a \textit{regularization strategy} if $\lim_{\alpha \to 0} \mathcal{R}_\alpha (F(x)) = x$ for every $x \in X$. A choice of regularization parameter $\alpha(\epsilon)$ with $\lim_{\epsilon\to0}\alpha(\epsilon) = 0$ is called \textit{admissible} if 
\begin{equation}
\lim_{\epsilon \to 0}
\sup_{y \in Y} \left\{ \norm{\mathcal{R}_{\alpha(\epsilon)}y -x}_X \,;\, \norm{y-F(x)}_Y \leq \epsilon\,\right\}
= 0
\end{equation}
holds for every $x \in X$. Regularization strategies have been found for other inverse problems including, for example, electrical impedance tomography (EIT)~\cite{KLMS09} and inverse problem for the $1+1$ dimensional wave equation \cite{KLO16, KLO18}.

We will next prove that the regularized inversion operator~$P_\alpha^s\xrt^*$ obtained in theorem~\ref{thm:regularization} actually provides an admissible regularization strategy with a quantitative stability estimate.

\begin{proof}[Proof of theorem~\ref{thm:strategy}]
Using proposition~\ref{prop:adjoint}, we write
\begin{equation}
P^s_\alpha\xrt^*(\xrt f+g)-f
=
(P^s_\alpha-\id)f+P^s_\alpha\xrt^* g
\end{equation}
and aim to estimate these two terms.
In this proof, we denote the norm of~$H^r(\T^2)$ simply by~$\aabs{\cdot}_r$.

Since $\aabs{g}_{H^t(\T^2\times Q)}\leq\eps$ and $\aabs{\xrt^*}=\aabs{\xrt}=1$, we have $\aabs{\xrt^*g}_t\leq\eps$.
Applying the definitions of the norms and the operator~$P^s_\alpha$, we find
\begin{equation}
\aabs{P^s_\alpha \xrt^* g}_r^2
\leq
\eps^2
\sup_{k\in\Z^2}
(1+\alpha\vev{k}^{2s})^{-2}\vev{k}^{2r-2t}.
\end{equation}
Estimating $1+\alpha\vev{k}^{2s}\geq\alpha\vev{k}^{2s}$ and using $-4s+2r-2t\leq0$ shows that the supremum is at most~$\alpha^{-2}$.
Therefore
\begin{equation}
\aabs{P^s_\alpha \xrt^* g}_r
\leq
\alpha^{-1}\eps,
\label{eq:regestimate}
\end{equation}
which converges to zero as $\eps\to0$ with $\alpha=\sqrt{\eps}$.

A calculation shows that $P_\alpha^s -\id = -\frac{\alpha\vev{k}^{2s}}{1+\alpha\vev{k}^{2s}}$ as a Fourier multiplier. Unfortunately, this implies that
\begin{equation}
\label{eq:rr-norm}
\aabs{P^s_\alpha-\id}_{H^r(\T^2)\to H^r(\T^2)}
= \sup_{k\in\Z^2}
\frac{\alpha\vev{k}^{2s}}{1+\alpha\vev{k}^{2s}}
=
1
\end{equation}
whenever $s>0$ and $\alpha>0$.
Therefore, a uniform estimate is impossible when $\delta=0$, but it follows from the dominated convergence theorem that $\aabs{(P^s_\alpha-\id)f}_r^2\to0$ as $\alpha\to0$ when $f\in H^r(\T^2)$.
The first claim of the theorem follows.

If $\delta>0$, the additional regularity of~$f$ can be used to our advantage.
It follows from the definitions of the norms that \begin{equation}
\norm{(P_\alpha^s - \id)f}_r^2 \leq \left(\sup_{k \in \Z^2} \left(\frac{\alpha\vev{k}^{2s}}{1+\alpha\vev{k}^{2s}}\right)^2\vev{k}^{-2\delta} \right)\norm{f}_{r+\delta}^2,
\end{equation}
and thus
\begin{equation}
\label{eq:ur-norm}
\aabs{P^s_\alpha-\id}_{H^{r+\delta}(\T^2)\to H^r(\T^2)}
=
\sup_{k\in\Z^2}
\frac{\alpha\vev{k}^{2s-\delta}}{1+\alpha\vev{k}^{2s}}
.
\end{equation}
Estimating this norm is crucial for the proof.

The supremum of~\eqref{eq:ur-norm} can be studied using the function $F\colon(0,\infty)\to(0,\infty)$ given by
\begin{equation}
F(x)
=
\frac{\alpha x^{2s-\delta}}{1+\alpha x^{2s}}.
\end{equation}
Simple calculus shows that if $2s>\delta$, then the maximum is attained at $x^{2s}=\alpha^{-1}(2s/\delta-1)$ and the maximal value on $(0,\infty)$ is
\begin{equation}
\alpha^{\delta/2s}
\frac{\delta}{2s}
\left(\frac{2s}{\delta}-1\right)^{1-\delta/2s}
.
\end{equation}
We are interested in the maximum of~$F$ on $[1,\infty)$.
If $2s/\delta-1<\alpha$, then the maximum is reached at $x\in(0,1)$, and so the maximum on the relevant interval is $F(1)=\alpha/(1+\alpha)$.
(One can also verify that the two maxima coincide when $2s/\delta-1=\alpha$, as they should.)
We assumed that $2s/\delta  > 1$, so $\alpha \in (0, 2s/\delta -1]$ for small enough~$\alpha$.

For $\alpha\leq 2s/\delta-1$, the maximum value of~$F$ is
\begin{equation}
\alpha^{\delta/2s}\frac{\delta}{2s}\left(\frac{2s}{\delta}-1\right)^{1-\delta/2s}
=
\alpha^{\delta/2s}C(\delta/2s).
\end{equation}
We conclude that
\begin{equation}
\aabs{P^s_\alpha-\id}_{H^{r+\delta}(\T^2)\to H^r(\T^2)}
\leq
\alpha^{\delta/2s}C(\delta/2s),
\end{equation}
and so
\begin{equation}
\aabs{(P^s_\alpha-\id)f}_r
\leq
\alpha^{\delta/2s}C(\delta/2s)\aabs{f}_{r+\delta}.
\label{eq:opnorm}
\end{equation}
Estimate~\eqref{eq:strategy-estimate} now follows easily from estimates~\eqref{eq:regestimate} and~\eqref{eq:opnorm}.
\end{proof}

If~$\alpha$ is bigger than assumed in the proof, then we may use the simpler estimate $F(x)\leq\alpha/(1+\alpha)\leq\alpha$ for all $x\geq1$, which would lead to replacing $\alpha^{\delta/2s}C(\delta/2s)$ in estimate~\eqref{eq:strategy-estimate} by simply~$\alpha$.
However, we are only interested in the limit of small~$\alpha$.

We point out that $C(\delta/2s)\to1$ and $\alpha^{\delta/2s}\to1$ when $\delta\to0$, matching the norm in the limiting case of~\eqref{eq:rr-norm}.

The noise~$g$ in theorem~\ref{thm:strategy} can be in any function space so that~$\xrt^*g$, the reconstruction from pure noise, is in a suitable Sobolev space.

\subsection{Numerical and asymptotic analysis for discretized problem}
\label{ssec:dft}

In this section, we consider questions arising from discrete practice. We analyze errors caused by a discretization of data in section~\ref{sec:discrete}. In section~\ref{sec:Schanuel}, we study how to choose a minimal set of X-ray directions in order to reconstruct all Fourier coefficients of a phantom in a given box.

Another source of errors in practice comes from the fact that we can only calculate finitely many coefficients of the Fourier series.
The error caused by the cutoff of the Fourier series can be estimated with knowledge of asymptotic behavior of the Fourier coefficients.
We do not consider this matter here further since it is a general question about convergence rates of Fourier series.

\subsubsection{On convergence rates for discretization}
\label{sec:discrete}

Let $\mathbf{f} \in \C^N$ be written as $\mathbf{f} = (\mathbf{f}_0,\dots,\mathbf{f}_{N-1})$. We define the discrete Fourier transform (DFT) of~$\mathbf{f}$ by \begin{equation}\mathrm{DFT}(\mathbf{f})_k := \frac{1}{N}\sum_{l=0}^{N-1} \mathbf{f}_l \exp(-2\pi i k l / N),\quad k = 0,\dots,N-1.\end{equation} The following corollary of theorem~\ref{thm:invforL1data} discretizes the inverse problem and reduces it to calculations of 1-dimensional DFTs. It is elementary and included here for completeness.

\begin{corollary}
\label{cor:dft}
Let $f \in \mathcal{T}'$, $\xrt_vf \in L^1(\T^2)$, $k \in \Z^2 \setminus 0$. Denote $g_v(y) := \xrt_vf(0,y)$ and $h_v(x) := \xrt_vf(x,0)$. \begin{enumerate}
\item \label{item:dft1} If $v \bot k$, then $\hat{f}(0,0) =  \hat{g}_{(1,0)}(0) = \hat{h}_{(0,1)}(0)$ and \[\hat{f}(k_1,k_2) = \begin{cases} \hat{g}_v(k_2),\quad k_2 \neq 0 \\
 \hat{h}_v(k_1),\quad k_1 \neq 0.\end{cases}\]
\item \label{item:dft2} (Left-point rule and DFT) Let $N \in \Z_+$. We denote $\textbf{g}_l = g_v(l/N)$ and $\textbf{h}_l = h_v(l/N)$ for $l = 0,\dots,N-1$. If~$\xrt_vf$ is Riemann integrable along vertical and horizontal lines, then
\[
\mathrm{DFT}(\textbf{g})_{k_2} \to \hat{g}_v(k_2) \text{ as } N \to \infty.
\]
Moreover, if $\xrt_vf \in C^1(\T^2)$, then $\abs{\hat{g}_v(k_2) - \mathrm{DFT}(\textbf{g})_{k_2}} \leq C_{f,k_2}/N$ where $C_{f,k_2} > 0$ does not depend on~$N$. Similar statements hold for~$h_v$ as well.

\item (Mid-point rule and DFT) Let $N \in \Z_+$. We denote $\textbf{g}_l = g_v(l/N+1/2N)$ and $\textbf{h}_l = h_v(l/N + 1/2N)$ for $l = 0,\dots,N-1$. If $\xrt_vf$ is Riemann integrable along vertical and horizontal lines, then
\[
\exp(-\pi i k_2/N)\mathrm{DFT}(\textbf{g})_{k_2} \to \hat{g}_v(k_2) \text{ as } N \to \infty.
\]
Moreover, if $\xrt_vf \in C^2(\T^2)$, then $\abs{\hat{g}_v(k_2) - \exp(-\pi i k_2/N)\mathrm{DFT}(\textbf{g})_{k_2}} \leq C_{f,k_2}/N^2$ where $C_{f,k_2} > 0$ does not depend on~$N$. Similar statements hold for~$h_v$ as well.
\end{enumerate}
\end{corollary}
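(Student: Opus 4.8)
The plan is to derive all three parts from the inversion formulas of theorem~\ref{thm:invforL1data} together with elementary Riemann-sum estimates, so that the proof is essentially bookkeeping. First I would dispose of part~\eqref{item:dft1}: by definition $g_v(y)=\xrt_vf(0,y)$ has one-dimensional Fourier coefficient $\hat g_v(k_2)=\int_0^1 g_v(y)\exp(-2\pi i k_2 y)\,dy$, which is exactly the right-hand side of~\eqref{eq:rec1} in the case $k_2\neq0$; the horizontal identity is the same statement with $h_v$ and $k_1$, and the value at $k=0$ is read off from~\eqref{eq:rec0}. Hence part~\eqref{item:dft1} is merely theorem~\ref{thm:invforL1data} rewritten in the notation $g_v,h_v$, and requires no further work.

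For part~\eqref{item:dft2} the key observation is that $\mathrm{DFT}(\mathbf g)_{k_2}=\frac1N\sum_{l=0}^{N-1}g_v(l/N)\exp(-2\pi i k_2 l/N)$ is exactly the left-endpoint Riemann sum, with mesh $1/N$, of the integrand $\phi(y):=g_v(y)\exp(-2\pi i k_2 y)$, whose integral over $[0,1]$ equals $\hat g_v(k_2)$. Since $g_v$ is Riemann integrable by hypothesis and the exponential factor is continuous, $\phi$ is Riemann integrable, so these Riemann sums converge to the integral; this gives the first assertion. For the rate, the hypothesis $\xrt_vf\in C^1(\T^2)$ makes $\phi$ continuously differentiable, and I would bound the error on each subinterval $[l/N,(l+1)/N]$ by $\int_{l/N}^{(l+1)/N}\abs{\phi(y)-\phi(l/N)}\,dy\leq \norm{\phi'}_\infty/N^2$ via the mean value theorem, then sum over the $N$ subintervals to obtain total error at most $\norm{\phi'}_\infty/N$. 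As $\norm{\phi'}_\infty\leq\norm{g_v'}_\infty+2\pi\abs{k_2}\norm{g_v}_\infty$ is independent of $N$, this furnishes the constant $C_{f,k_2}$.

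For the third part I would first account for the phase factor: writing the midpoint node as $(2l+1)/(2N)$ and factoring $\exp(-2\pi i k_2(2l+1)/(2N))=\exp(-\pi i k_2/N)\exp(-2\pi i k_2 l/N)$ shows that $\exp(-\pi i k_2/N)\mathrm{DFT}(\mathbf g)_{k_2}$ equals the midpoint Riemann sum of the same $\phi$. Convergence again follows from Riemann integrability of $\phi$. For the $C^2$ rate I would Taylor-expand $\phi$ about each midpoint; the first-order term integrates to zero over the symmetric subinterval, leaving a remainder controlled by $\norm{\phi''}_\infty$, which gives per-subinterval error $O(N^{-3})$ and hence total error $O(N^{-2})$, with a constant depending only on $f$ and $k_2$. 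The statements for $h_v$ follow verbatim after interchanging the two coordinates and the roles of $k_1$ and $k_2$.

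The computation is routine; the only steps needing care are the identification of the DFT with the correct quadrature rule --- immediate for the left rule, but requiring the explicit phase correction $\exp(-\pi i k_2/N)$ for the midpoint rule --- and the verification that the quadrature constants genuinely do not depend on $N$. I do not expect any substantial obstacle beyond this bookkeeping.
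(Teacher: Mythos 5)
Your proposal is correct and follows essentially the same route as the paper: part (1) is read off as a restatement of theorem~\ref{thm:invforL1data}, and parts (2)--(3) are obtained by identifying the (phase-corrected) DFT with the left-point and mid-point Riemann sums of $g_v(y)e^{-2\pi i k_2 y}$ and invoking standard quadrature convergence rates. The only difference is that you write out the elementary error estimates explicitly where the paper simply cites a standard reference for the mid-point rule.
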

\begin{proof}
The statement (1) is a rephrased version of theorem~\ref{thm:invforL1data}. We only prove the statement (3). The proof of the statement (2) is similar and thus omited. Let $N \in \Z_+$ be fixed. By the definition of the DFT
\begin{equation}
\begin{split}
&\exp(-\pi i k_2/N)\mathrm{DFT}(\mathbf{g})_{k_2} \\ &\,\,= \frac{1}{N} \sum_{l=0}^{N-1} \mathbf{g}_l \exp(-\pi i k_2/N)\exp(-2\pi i k_2 l/N) \\
&\,\,= \frac{1}{N} \sum_{l=0}^{N-1} g_v(l/N+1/2N) \exp(-2\pi i k_2 (l/N + 1/2N)).
\end{split}
\end{equation}
The statement follows since this the mid-point approximation of $\hat{g}_v(k_2)$. The convergence rate is just a standard result on the mid-point rule (see e.g. \cite{DR07}).
\end{proof}

\subsubsection{Choosing directions for X-ray data.}
\label{sec:Schanuel}

Let us define the set
\begin{equation}
A_N
:=
\{ v \in \Z^2 \setminus 0 \,;\,  v \in k^\bot\text{ for some }k \in \mathcal{Z}_N\}
\end{equation}
where $\mathcal{Z}_N = [-N,N]^2 \cap \Z^2$. {Based on theorem~\ref{thm:ilmavirta}, the data $(\xrt f(\cdot,v))_{v \in A_N}$ determines $(\hat{f}(k))_{k \in \mathcal{Z}_N}$. Thus, we define}
\begin{equation}\phi(N)
:=
\min \{\abs{B} \,;\, B\subset A_N, (\xrt f(\cdot,v))_{v\in B} \text{ determines $(\hat{f}(k))_{k \in \mathcal{Z}_N}$}\,\}.
\end{equation}
Define the set $V_N := X_+ \cup X_- \cup \{(1,0),(0,1)\}$ where \begin{equation}\begin{split}X_+ &= \{\, (v_1,v_2) \in \mathcal{Z}_N\setminus 0 \,;\, \gcd(v_1,v_2) = 1, v_1,v_2 \geq 1\,\}, \\ 
X_- &= \{\,(-v_1,v_2) \,;\, v \in X_+\,\}.\end{split}\end{equation} Now, it is an elementary observation that the data $(\xrt f(\cdot,v))_{v\in V_N}$ determines $(\hat{f}(k))_{k \in \mathcal{Z}_N}$ and $\abs{V_N} = \phi(N)$.

We then turn to studying the asymptotic behavior of~$\phi(N)$. We denote by $\mathbb{P}^1 := \mathbb{P}^1(\mathbb{Q})$ the collection of equivalence classes $(a:b)$, $(a, b) \in \Z^2 \setminus 0$, such that $(x,y) \in (a:b)$ if and only if $c(x,y) = (a,b)$ for some $c \neq 0$ and $(x,y) \in \Z^2 \setminus 0$. The \textit{height} is defined as $H(a:b) := \max\{\abs{a},\abs{b}\}$ using the unique representative (up to a sign) of $(a:b)$ with $\gcd(a,b) = 1$. One of the simplest special cases of Schanuel's theorem \cite[Theorem 1]{S64} states that \begin{equation}\abs{\{\,(a:b) \in \mathbb{P}^1\,;\, H(a:b) \leq N\,\}} = \frac{2}{\zeta(2)}N^2 + O(N\log N)\end{equation} as $N \to \infty$. More detailed exposition is given in the book of Serre \cite[Chapter 2.5]{S97}. 

We conclude with the following proposition.

\begin{proposition}
\label{prop:Schanuel}
It holds that $\phi(N) = \frac{2}{\zeta(2)}N^2+ O(N\log N)$.
\end{proposition}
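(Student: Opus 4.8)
The plan is to reduce the evaluation of $\phi(N)$ to the projective point count already quoted from Schanuel's theorem. The key input is the identity $\phi(N)=\abs{V_N}$ stated just above the proposition, so it suffices to count the elements of $V_N=X_+\cup X_-\cup\{(1,0),(0,1)\}$ and to match this against $\abs{\{(a:b)\in\mathbb{P}^1 : H(a:b)\leq N\}}$.

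First I would exhibit an explicit bijection between $V_N$ and the set of projective points of height at most~$N$. The two axis vectors $(1,0)$ and $(0,1)$ are sent to the classes $(1:0)$ and $(0:1)$, the only classes with a vanishing coordinate. Each $(v_1,v_2)\in X_+$ is sent to $(v_1:v_2)$ and each $(-v_1,v_2)\in X_-$ to $(-v_1:v_2)$. Every vector appearing here is primitive since its coordinates are coprime, and the box constraint $v\in\mathcal{Z}_N$ is exactly the sup-norm bound $\max\{\abs{v_1},\abs{v_2}\}\leq N$, which coincides with the height bound $H\leq N$ on the associated class. This agreement between the shape of~$\mathcal{Z}_N$ and the definition of~$H$ is what makes the two counts equal exactly, not merely asymptotically.

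The core of the argument is to check that this map is a bijection. For injectivity I would note that $X_+$ lands in classes with two positive coordinates, $X_-$ in classes of mixed sign, and the two axis vectors in the two coordinate axes, and that distinct primitive vectors of a fixed sign pattern determine distinct classes, so no collisions occur within or between the three families. For surjectivity I would observe that every class with a vanishing coordinate is one of the two axis classes, while every class with both coordinates nonzero has a unique primitive representative once normalized to have positive second coordinate, and such a representative lies in $X_+$ or $X_-$ according to the sign of its first coordinate. Hence $\abs{V_N}=\abs{\{(a:b)\in\mathbb{P}^1 : H(a:b)\leq N\}}$, and the quoted special case of Schanuel's theorem yields $\phi(N)=\frac{2}{\zeta(2)}N^2+O(N\log N)$.

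The computation is elementary, and the only delicate point is the sign and quadrant bookkeeping in the bijection: confirming that $X_+$, $X_-$ and the two axis vectors together represent every primitive direction up to sign exactly once, and that the box~$\mathcal{Z}_N$ corresponds to the height rather than to some other norm. Once this correspondence is in place, no further estimation is required beyond invoking the stated asymptotics.
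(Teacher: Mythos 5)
Your proof is correct and follows essentially the same route as the paper: both reduce $\phi(N)$ to the count of projective points of height at most~$N$ and then invoke the quoted case of Schanuel's theorem. The only difference in emphasis is that you take the identity $\phi(N)=\abs{V_N}$ as given and spell out the bijection between $V_N$ and $\{(a:b)\in\mathbb{P}^1 : H(a:b)\leq N\}$, whereas the paper's proof supplies the minimality argument directly --- each projective class of frequencies~$k$ requires exactly one perpendicular direction, and one suffices, so $\phi(N)$ equals the number of classes of height at most~$N$ --- which is also precisely what justifies the asserted identity $\phi(N)=\abs{V_N}$ that you rely on.
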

\begin{proof}
If we want to reconstruct~$\hat{f}(k)$, then we need at least one $v \in k^\bot$ by theorem~\ref{thm:ilmavirta} and, on the other hand, just one $v \in k^\bot$ is enough. It follows from the definition of height that \begin{equation}\phi(N) = \abs{\{\,(a:b) \in \mathbb{P}^1\,;\, H(a:b) \leq N\,\}}.\end{equation} The estimate follows now from Schanuel's theorem.
\end{proof}

\begin{remark}
\label{remark30percent}
The trivial estimate for directions needed in reconstruction of the Fourier coefficients $(\hat{f}(k))_{k\in\mathcal{Z}_N}$ would be $\phi(N) \leq (2N+1)^2$. In comparison, proposition~\ref{prop:Schanuel} implies that one needs to use asymptotically about $3/\pi^2 \approx 30$ \% of the data $(\xrt f(\cdot,v))_{v\in\mathcal{Z}_N}$.
\end{remark}

\section{Computational forward and inverse models}
\label{sec:implementation}

We have implemented two forward models for the X-ray transform on~$\T^2$. The first forward model is based on direct integration over periodic geodesics on~$\T^2$ (two different numerical integration schemes are implemented), and the second forward model on the usual Radon transform and the torus-projection operator. The regularized inverse model is based on theorems~\ref{thm:invfor} and~\ref{thm:regularization}.

\subsection{Computational forward models}
\subsubsection{Forward models on the torus}
\label{sssec:forwardontorus}

We have two different numerical integration schemes for the forward integration. The first one is analytical integration of a phantom which is discretized into square pixels of equal size. In this case, the forward operator, denoted by~$\mathcal{A}_1$, is
\begin{equation}
\label{eq:forwardmodel1int}
     \mathcal{A}_1f(x,v) 
    := \frac{1}{\abs{v}}\sum_{i=1}^{N} d_i f_i \approx 
     \int_0^1 f(x+tv) dt =  \xrt f(x,v)
\end{equation}
where~$d_i$ is the length of the geodesic~$\gamma_{x,v}$ and~$f_i$ is the value of the discretized phantom in the $i$'th pixel, and~$N$ is the size of the grid. The lengths~$d_i$ are calculated by solving the intersection points of the line $\{\,x+tv\,;\, t \in [0,1]\,\}$ and the edges of the pixels when the pixels are periodically extended to~$\R^2$.

In the second one, the integral is based on the use of global adaptive quadrature~\cite{S08} which is implemented into Matlab's \texttt{integral} function. In this case, a phantom is given in an analytical form. We denote this forward model by~$\mathcal{A}_2$.

\subsubsection{Forward model using the torus-projection and Radon data}
\label{sec:t2forward}

This forward model corresponds to converting conventional X-ray data sets on~$\R^2$ into X-ray data sets on~$\T^2$. The forward model has two steps. The first step is to calculate Radon transform data using Matlab's \texttt{radon} function. The second step is to calculate the torus-projection (see Section~\ref{sec:red}) of the Radon data. The directions for the Radon transform are chosen so that they contain all directions generated by integer vectors (see Section~\ref{sec:Schanuel}). 

The X-ray beams on the \texttt{radon} function are parametrized by the distance between the line of a X-ray beam and the center of a domain~$O$, and the angle of a X-ray beam measured from the $y$-axis into the counterclockwise direction. We denote simply that $Rf(v) = \text{\texttt{radon}}(f,\alpha_{v},M)$ where~$\alpha_{v}$ is the angle defined above and~$M$ is the number of X-rays taken into direction~$v$. We index the rays as $k = 1,\dots, M$. Further, denote the distances of rays to~$O$ by~$c_{k,v}$ and the projection values with the respective rays by~$Rf(v)_k$. 

We split each geodesic~$\gamma_{x,v}$ into segments in which it travels from the boundary to the boundary when looked at the fundamental domain $[0,1] \times [0,1]$ of~$\T^2$. Let~$d_{x,v,i}$ be the distance of the $i$'th segment of the geodesic~$\gamma_{x,v}$ and~$O$, and~$N$ the number of distinct segments. Finally, we can define the forward model~$\mathcal{A}_{\T^2}$ as
\begin{equation}
\label{eq:T2forward}
\mathcal{A}_{\T^2}f(x,v) = \frac{1}{\abs{v}}\sum_{i=1}^N \left(w_{1, i}Rf(v)_{k_{1,i}} + w_{2,i}Rf(v)_{k_{2,i}}\right)
\end{equation}
where
\[
\begin{split}
k_{1,i} = \argmin_{k\in \{1,\dots,M \}} |c_{k,v}-d_{x,v,i}|&,\quad
k_{2,i} = \argmin_{ k\in \{1,\dots,M\}\setminus \{k_{1,i}\}} |c_{k,v}-d_{x,v,i}|, \\
w_{1,i} = \abs{\frac{c_{k_{2,i},v} - d_{x,v,i}}{c_{k_{1,i},v}-c_{k_{2,i},v}}}&,\quad
w_{2,i} = \abs{\frac{c_{k_{1,i},v} - d_{x,v,i}}{c_{k_{1,i},v}-c_{k_{2,i},v}}},
\end{split}
\]
if $\abs{c_{k_{1,i},v} - d_{x,v,i}} + \abs{c_{k_{2,i},v} - d_{x,v,i}} < \abs{c_{k_{1,i},v}-c_{k_{2,i},v}}$, and $w_{1,i} = w_{2,i} = 0$ otherwise.

The last condition ensures that the rays, corresponding to the data in interpolation, are on the different sides of the geodesic segment. Vice versa, if the condition does not hold, the geodesic segment is outside the projection width. In other words, this condition is the zero extension of the data near boundaries of the domain. In short,~$\mathcal{A}_{\T^2}$ is the sum of weighted averages of two closest projection values with respect to their distances to the corresponding geodesic segments.


\subsection{Computational inverse model}
\label{ssec:Computationalinversemodel}

In the inverse model, we calculate the Fourier series coefficients of a phantom and reconstruct its Fourier series up to a finite radius $r > 0$. The Fourier coefficients are calculated using the inversion formulas~\eqref{eq:rec1} and~\eqref{eq:rec0} of theorem~\ref{thm:invfor}. Furthermore, we Tikhonov regularize reconstructions using the filter~$P_\alpha^s$ on the Fourier side according to theorems~\ref{thm:regularization} and~\ref{thm:strategy}.

Let us write $B_r = B(0,r) \cap \Z^2$. The inverse model is 
\begin{equation}\begin{split}
f_\mathrm{rec}^{\alpha,s}(x) &= \sum_{k\in B_r} P_\alpha^s \widehat{f}_\mathrm{rec}(k)\exp(2\pi i k \cdot x) \\ 
&= \sum_{k\in B_r} (1+\alpha\vev{k}^{2s})^{-1} \widehat{f}_\mathrm{rec}(k)\exp(2\pi i k \cdot x)
\end{split}
\label{eq:reconstruction}
\end{equation}
where $\hat{f}_\mathrm{rec}(k)$ is calculated from data using the left-point rule and the DFT according to~\eqref{item:dft1} and~\eqref{item:dft2} of corollary~\ref{cor:dft}. {We remark that the inverse model is meshless and
its output is a trigonometric polynomial.}

\section{Numerical experiments}
\label{sec:numerics}


\subsection{Phantoms, convergence rates of Fourier series and discretization}
\label{ssec:numexp.pre}

\subsubsection{Phantoms}
We have used two phantoms in the numerical experiments, the Shepp--Logan phantom based on Matlab's function \texttt{phantom} and the Flag phantom which is a piece-wise constant function representing a Nordic flag.
The Flag phantom $f_F\colon[0,1] \times [0,1] \to \R$ was defined as 
\begin{equation}
f_{F}(x,y)
=
\begin{cases}
g_F(x,y), & x \in (0.14,0.86) \text{ and } y \in (0.28,0.72) \\
0, & \text{otherwise}
\end{cases}
\label{eq:flag1}
\end{equation}
where
\begin{equation}
g_{F}(x,y)
=
\begin{cases}
0.3, & x \in (0.34,0.46) \text{ or } y \in (0.44,0.56) \\
0.9, & \text{otherwise.}
\end{cases}
\label{eq:flag2}
\end{equation}
That is,~$f_F$ describes the outer boundaries of the flag, and~$g_F$ returns the background unless~$x$ or~$y$ is on the horizontal or vertical stripe, respectively.

\subsubsection{Cutoff errors of Fourier series of phantoms}

We analyzed the cutoff errors of Fourier series of the phantoms in order to determine a good, practical value of $r>0$ for the reconstructions. The squared cutoff error of Fourier series can be calculated via the formula
\begin{equation}
\label{eq.effectofn}
\epsilon_r = \norm{f}_{L^2(\T^2)}^2 - \sum_{k \in B_r} \hat{f}(k)^2
\end{equation}
using Parseval's identity.

We computed~$\epsilon_r$ for the Shepp--Logan phantom, the Flag phantom and the Flag phantom with a $45^\circ$ rotation. All the three phantoms were studied without noise and with salt-and-pepper (S\&P) type noise applied to the phantoms using Matlab's \texttt{imnoise} function with $0.02$ noise density. The phantoms were discretized into $4000\times 4000$ pixel grid and the Fourier coefficients~$\hat{f}(k)$ were computed using Matlab's \texttt{fft2} and \texttt{fftshift} functions.

The squared cutoff errors~$\epsilon_r$ are shown in Figure~\ref{fig.effectofn}. The squared cutoff errors saturate at around $r=50$, though some improvement might be gained up to $r=200$. In our forward and inverse simulations, we have mainly used $r=50$ as it practically seems to be a sufficiently good choice.

\begin{figure}[ht]
\includegraphics[width=0.7\textwidth]{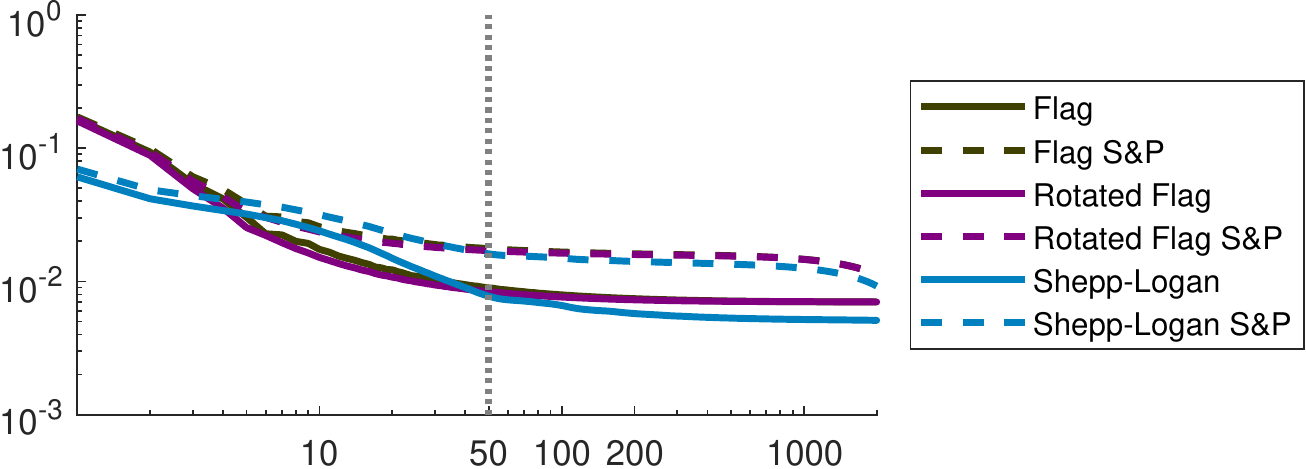}
\caption{The graphs $(r,\epsilon_r)$ in a logarithmic scale for the different phantoms. Vertical, dashed line marks $r=50$ where~$\epsilon_r$ saturates.}
\label{fig.effectofn}
\end{figure}

\subsubsection{Discretizations of phantoms and geodesics}
\label{ssec:discretization}

The starting points of the used geodesics were chosen to be the equispaced points 
$\{ (0,0), (1/n_d,0), (2/n_d,0), \dots, (1-1/n_d,0) \}$ on the $x$-axis, 
except for geodesics in direction $v=(1,0)$ where the sampling was 
$\{ (0,0), (0,1/n_d), (0,2/n_d), \dots, (0,1-1/n_d) \}$ on the $y$-axis.
In our experiments, we set $n_d=128$ when the cutoff radius of the Fourier series was $r\in\{50,100\}$, and $n_d=r$ when $r\in\{150,200\}$.

The phantoms were discretized with $512\times 512$ pixel grid when used for forward simulations with the forward models~$\mathcal{A}_1$ and~$\mathcal{A}_{\T^2}$. When we used the forward model~$\mathcal{A}_2$, the Flag phantom was not discretized. The values of reconstructions were evaluated at equispaced points in $256\times 256$ pixel grid, and when compared to the ground truth, the Shepp--Logan and the Flag phantoms were discretized for the same grid.

\subsection{Numerical analysis of forward models~$\mathcal{A}_1$, $\mathcal{A}_2$ and~$\mathcal{A}_{\T^2}$}
\label{ssect:numexp.forward}

\subsubsection{Forward models~$\mathcal{A}_1$ and~$\mathcal{A}_2$ on the torus}

We tested Torus CT using the Shepp--Logan phantom with simulated data 
\begin{equation}
\label{eq:noisemodel}
    y = \mathcal{A}_1f + \mathcal{E}, \quad \mathcal{E} \sim \mathcal{N}(0,\sigma^2), \quad \sigma = \frac{2}{100}. 
\end{equation}
We made reconstructions with cutoff radii $r\in\{50,100,150,200\}$ of the Fourier series.

In the case of $r=50$, we experimented with Tikhonov regularization. The reconstruction errors with different regularization parameters are shown in Figure~\ref{fig.shepploganerrsurf}. We have calculated the (relative) reconstruction errors using the formula
\begin{equation}
\epsilon_p^{\alpha,s} = \frac{ \norm{f-f_\mathrm{rec}^{\alpha,s}}_{L^p(\T^2)}}{\norm{f}_{L^p(\T^2)}}.
\label{eq.errorequation}
\end{equation}
The optimal regularization parameter values yielding the smallest error are given in Table~\ref{table.errsurftable}. The plotted errors Figure~\ref{fig.shepploganerrsurf} share some similarities in shape and the resulting regularization parameter values are close to each other.

\begin{table}[hb]
\caption{The regularization parameters $(\alpha,s)$ that give the best reconstructions with respect to the $L^p$-norms with $p = 1,2, \infty$; respective error~$\epsilon_p^{\alpha,s}$; and error~$\epsilon_p^{0,0}$ of non-regularized reconstruction.}
\label{table.errsurftable}
\begin{tabular}{l | 
S[table-format=1.3] 
S[table-format=1.2] 
S[table-format=2.0,table-space-text-post = \%]
S[table-format=3.0,table-space-text-post = \%] | 
S[table-format=1.3] 
S[table-format=1.2] 
S[table-format=2.0,table-space-text-post = \%]
S[table-format=3.0,table-space-text-post = \%] }
norm & \multicolumn{4}{c|}{Shepp--Logan} & \multicolumn{4}{c}{Flag} \\
$p$ & \multicolumn{1}{c}{$\alpha$} & \multicolumn{1}{c}{$s$} & \multicolumn{1}{c}{$\epsilon_p^{\alpha,s}$} & \multicolumn{1}{c|}{$\epsilon_p^{0,0}$} & \multicolumn{1}{c}{$\alpha$} & \multicolumn{1}{c}{$s$} & \multicolumn{1}{c}{$\epsilon_p^{\alpha,s}$} & \multicolumn{1}{c}{$\epsilon_p^{0,0}$} \\
1 & 0.050 & 0.69 & 62\% & 112\% & 0.025 & 0.71 & 41\% & 69\% \\
2 & 0.025 & 0.61 & 48\% & 70\% & 0.025 & 0.68 & 29\% & 45\% \\
$\infty$ & 0.025 & 0.56 & 75\% & 112\% & 0.025 & 0.78 & 73\% & 106\% \\
\end{tabular}
\end{table}

The Shepp--Logan phantom is shown in Figure~\ref{fig.shepploganphnatom} and its non-regularized solution in Figure~\ref{fig.shepplogannoreg}. The regularized solutions with $p=2$ and $p=\infty$ based regularization parameter values (Figures~\ref{fig.shepplogan2} and~\ref{fig.shepploganinf}) are similar, and $p=1$ based values yield slightly smoother reconstruction (Figure~\ref{fig.shepplogan1}).

We tested the effect of increasing the Fourier coefficient by computing the forward data required for reconstruction of the Fourier coefficients up to radii $r=100$, $r=150$ and $r=200$, and reconstructions are shown in Figures~\ref{fig.shepploganN100}, \ref{fig.shepploganN150}, and~\ref{fig.shepploganN200} respectively. The constant regions in the phantom become a bit more smoother, but overall dynamical range is increased and the impact of noise in reconstructions remains relatively high.

Similar analysis was also performed with the Flag phantom. We simulated noisy data using the model $y = \mathcal{A}_2f + \mathcal{E}$ with the noise model of~\eqref{eq:noisemodel}.
The case $r=50$ was used to test regularization. The reconstruction errors~$\epsilon_p^{\alpha,s}$ are shown in Figure~\ref{fig.flagerrsurf} and the regularization values yielding the minimum error are given in Table~\ref{table.errsurftable}. The regions close to the minimum of~$\epsilon_p^{\alpha,s}$ are more distinct than in the case of the Shepp--Logan phantom, but similar shape is seen.

The Flag phantom is shown in Figure~\ref{fig.flag} and the non-regularized reconstruction in Figure~\ref{fig.flagnonreg}. The regularized reconstructions with the optimal regularization parameters yielding the minimum errors with $p=1$, $p=2$ and $p=\infty$ are shown in Figures~\ref{fig.flag1}, \ref{fig.flag2} and~\ref{fig.flaginf}, respectively. The regularization parameter values yielding the minimum were close to each other, and with the Flag phantom, no significant difference is seen in the regularized reconstructions. 

Increasing the radius of the Fourier coefficients again increases the dynamical range, plotted in Figures~\ref{fig.flagN100}, \ref{fig.flagN150} and~\ref{fig.flagN200} for $r\in\{100,150,200\}$, respectively. However, unlike with the Shepp--Logan phantom, the details become more distinct, especially the details of the corners in the Flag phantom.

\begin{figure}
\begin{center}
\begin{subfigure}{0.6\textwidth}
\includegraphics[width=\textwidth]{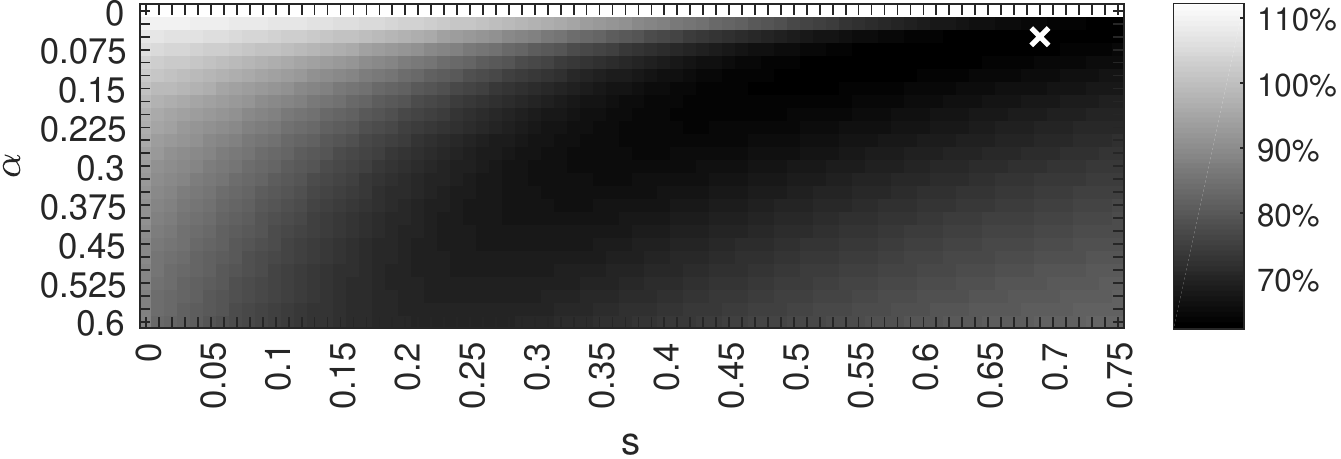}
\caption{} 
\end{subfigure}

\begin{subfigure}{0.6\textwidth}
\includegraphics[width=\textwidth]{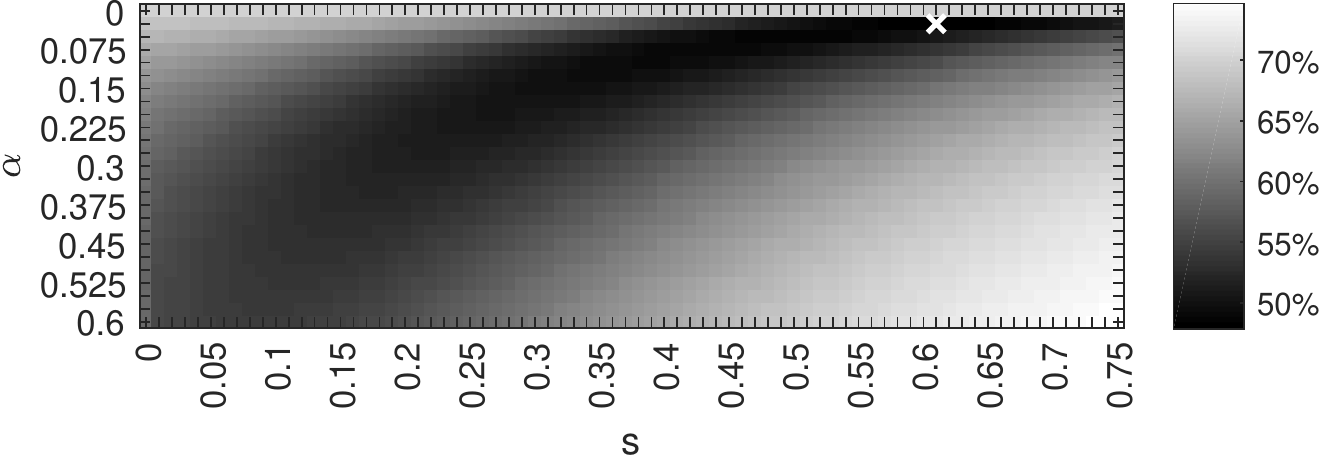}
\caption{}
\end{subfigure}

\begin{subfigure}{0.6\textwidth}
\includegraphics[width=\textwidth]{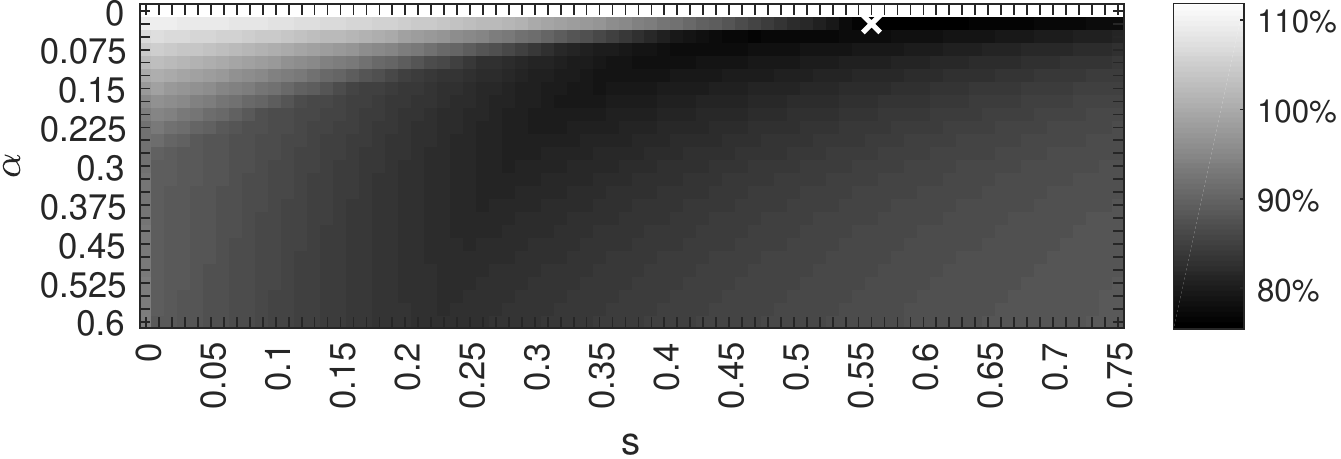}
\caption{}
\end{subfigure}
\end{center}
\caption{Error surfaces from Shepp--Logan phantom data using (A) $L^1$-norm, (B) $L^2$-norm and (C) $L^\infty$-norm. The values of the regularization parameters are $\alpha \in \{0, 0.025, 0.050, \dots, 0.600\}$ and $s \in \{0,0.01,0.02,\dots,0.75\}$.}
\label{fig.shepploganerrsurf}
\end{figure}

\begin{figure}
\begin{center}
\begin{subfigure}{0.25\textwidth}
\includegraphics[width=\textwidth]{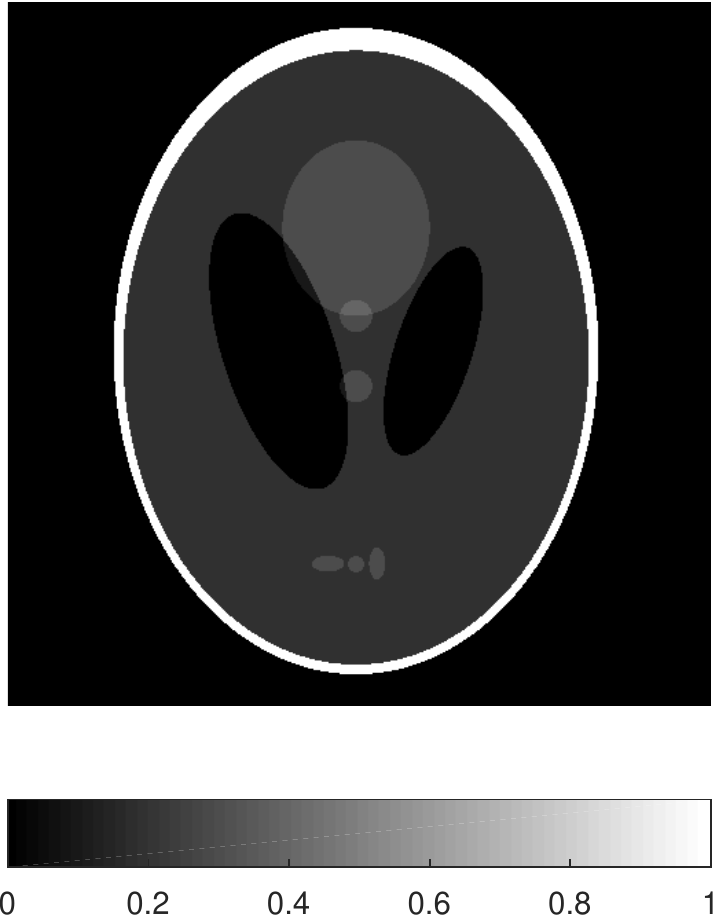}
\caption{} \label{fig.shepploganphnatom}
\end{subfigure}
~
\begin{subfigure}{0.25\textwidth}
\includegraphics[width=\textwidth]{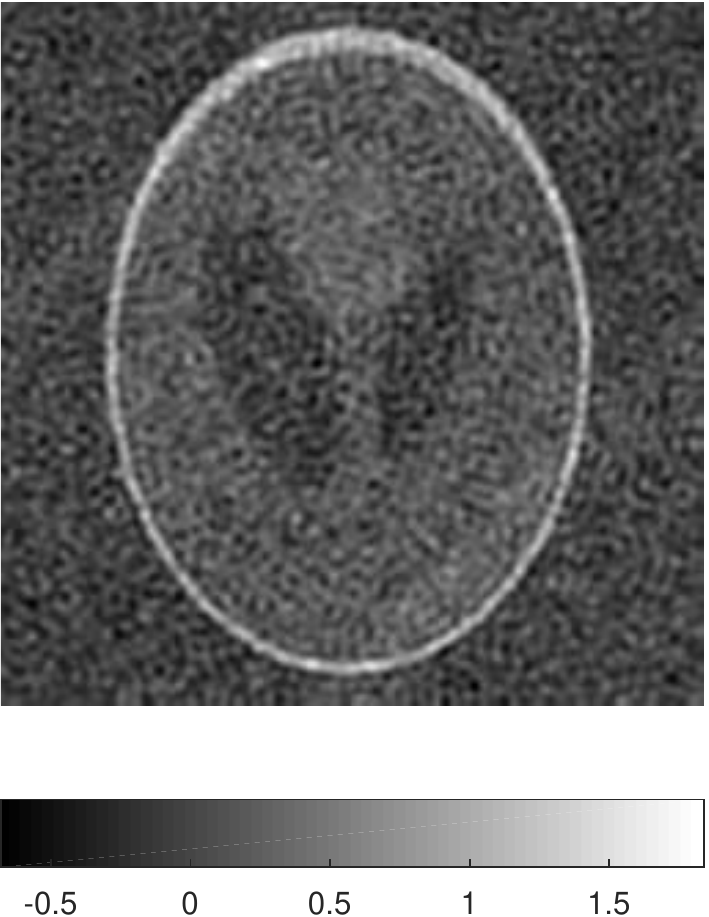}
\caption{} \label{fig.shepplogannoreg}
\end{subfigure}

\begin{subfigure}{0.25\textwidth}
\includegraphics[width=\textwidth]{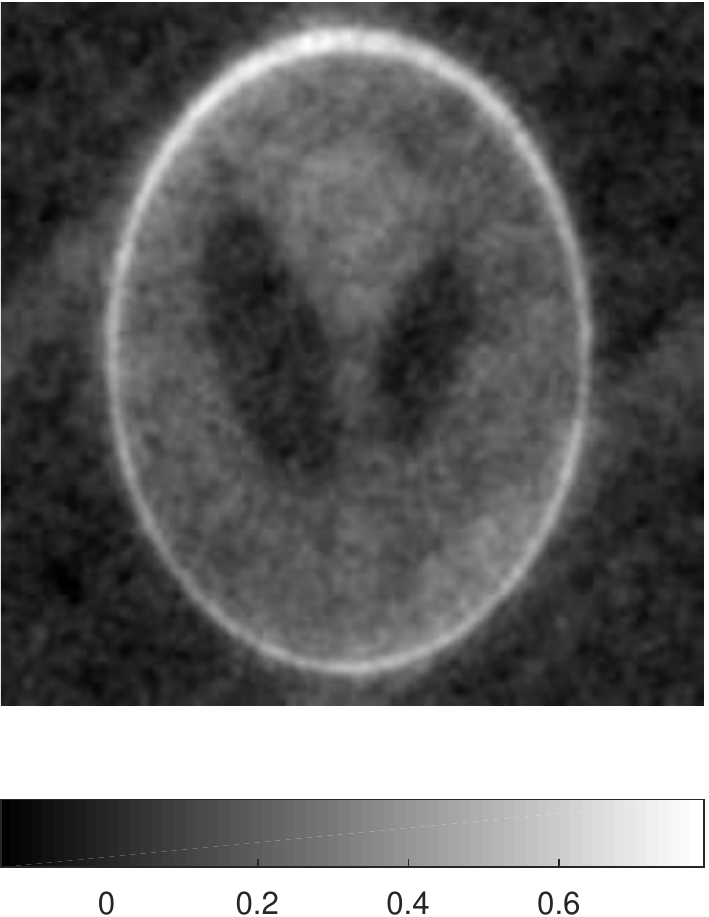}
\caption{} \label{fig.shepplogan1}
\end{subfigure}
\begin{subfigure}{0.25\textwidth}
\includegraphics[width=\textwidth]{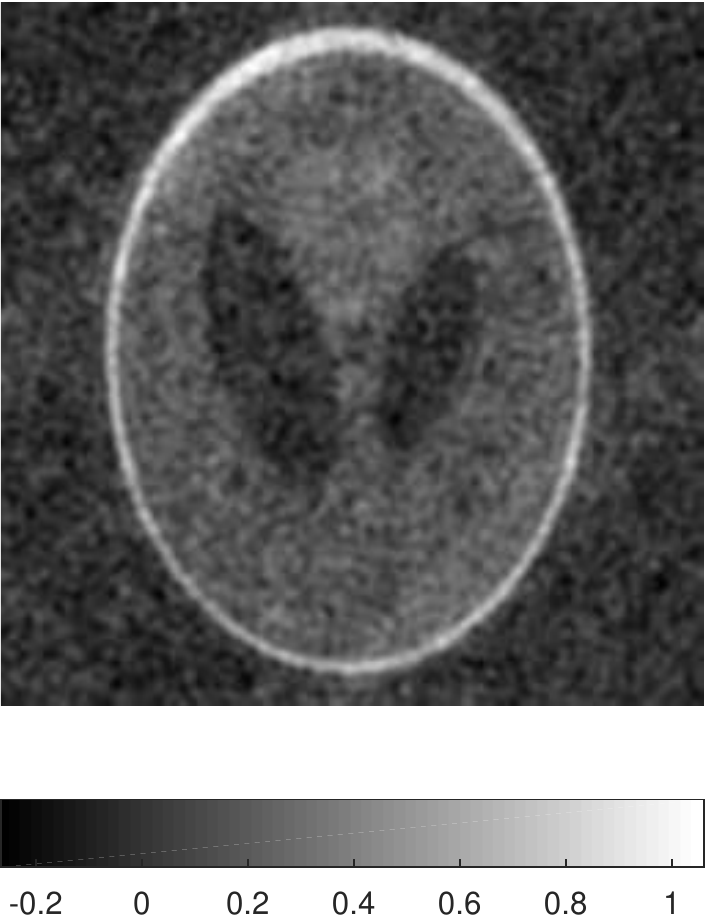}
\caption{} \label{fig.shepplogan2}
\end{subfigure}
\begin{subfigure}{0.25\textwidth}
\includegraphics[width=\textwidth]{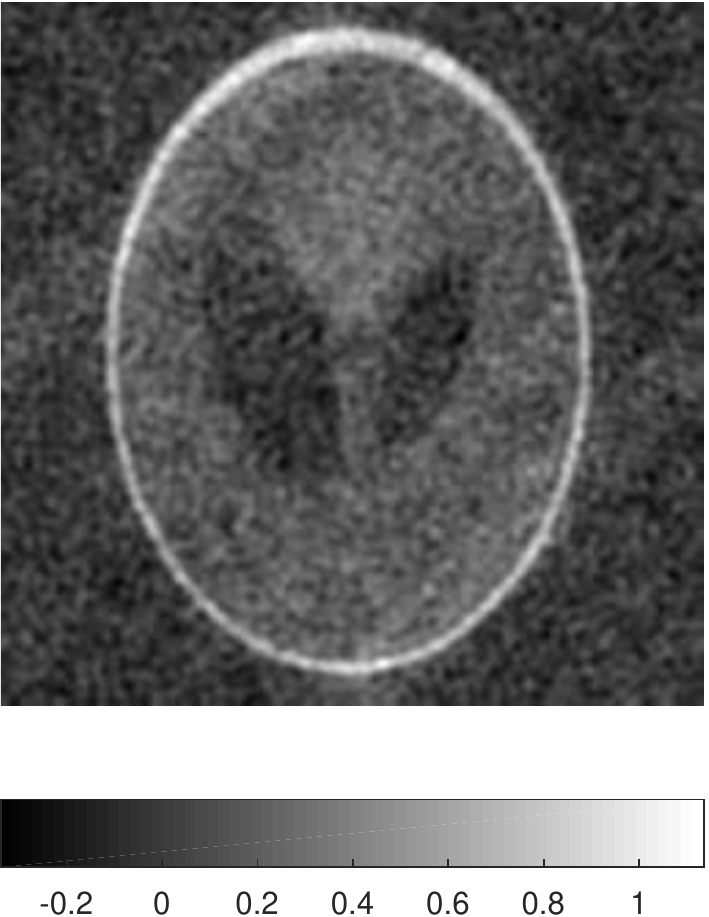}
\caption{} \label{fig.shepploganinf}
\end{subfigure}

\begin{subfigure}{0.25\textwidth}
\includegraphics[width=\textwidth]{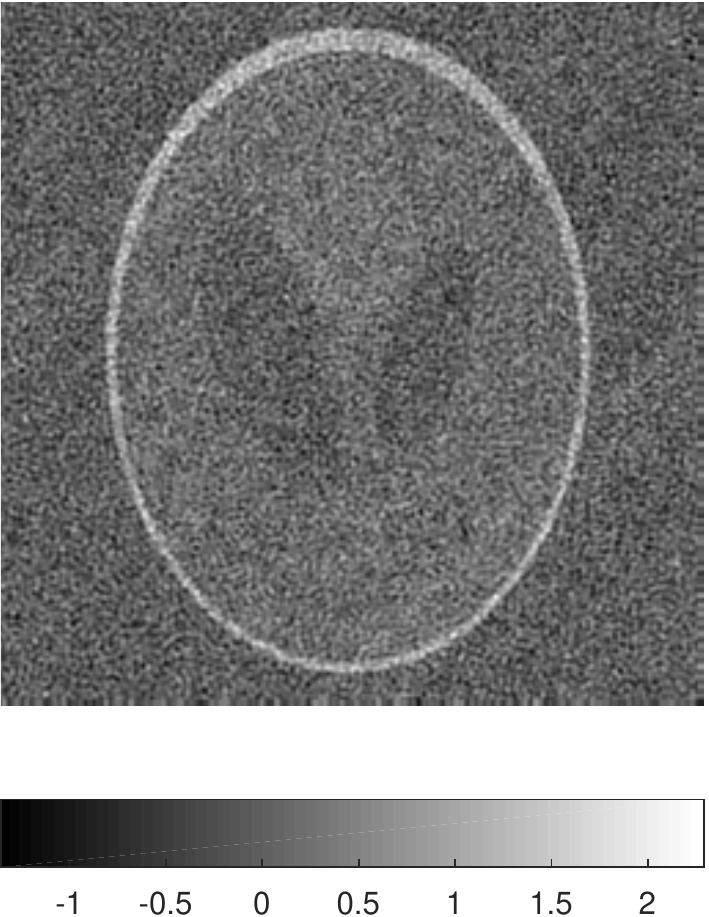}
\caption{} \label{fig.shepploganN100}
\end{subfigure}
\begin{subfigure}{0.25\textwidth}
\includegraphics[width=\textwidth]{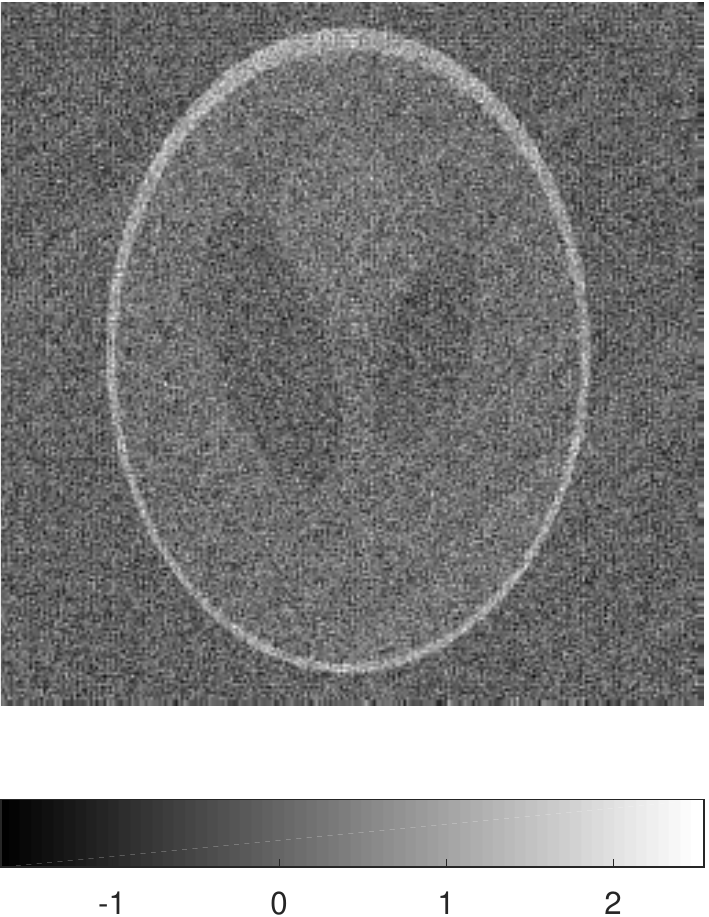}
\caption{} \label{fig.shepploganN150}
\end{subfigure}
\begin{subfigure}{0.25\textwidth}
\includegraphics[width=\textwidth]{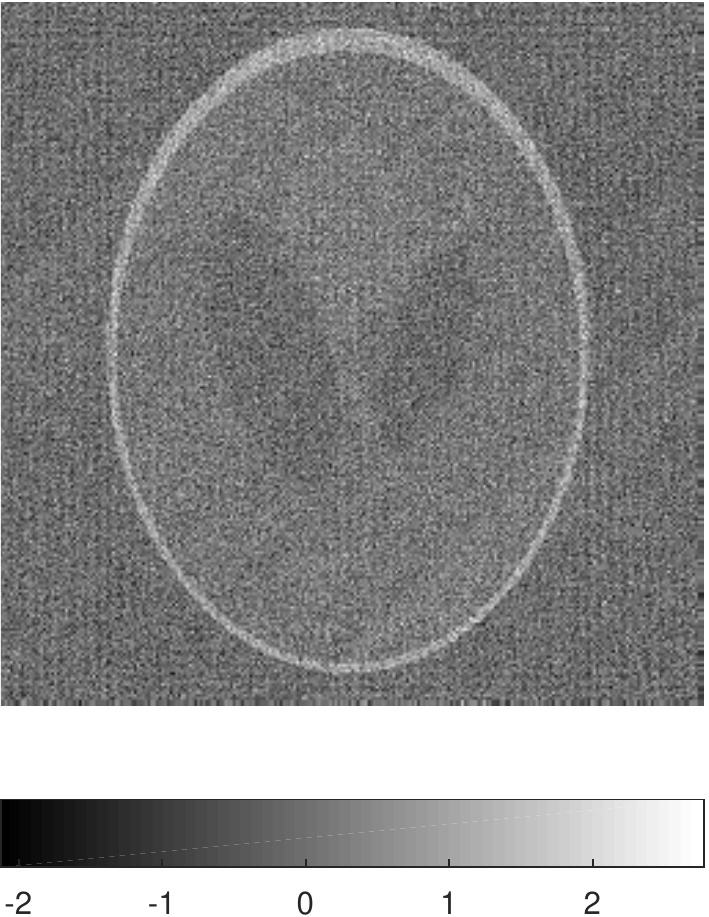}
\caption{} \label{fig.shepploganN200}
\end{subfigure}

\end{center}
\caption{(A) Shepp--Logan phantom, (B) non-regularized reconstruction and (C-E) regularized reconstructions respectively with $L^1$-, $L^2$- and $L^\infty$-norm based choice of reconstruction values. (F-H) Non-regularized reconstruction with increased cutoff radii of the Fourier series, $r=100, 150, 200$, respectively.}
\label{fig.shepploganreconstructions}
\end{figure}

\begin{figure}
\begin{center}
\begin{subfigure}{0.6\textwidth}
\includegraphics[width=\textwidth]{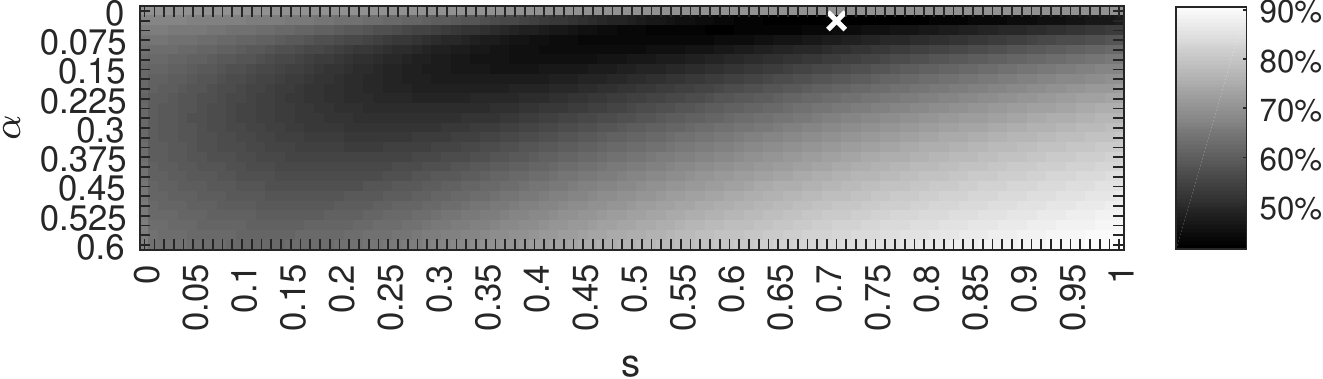}
\caption{}
\end{subfigure}

\begin{subfigure}{0.6\textwidth}
\includegraphics[width=\textwidth]{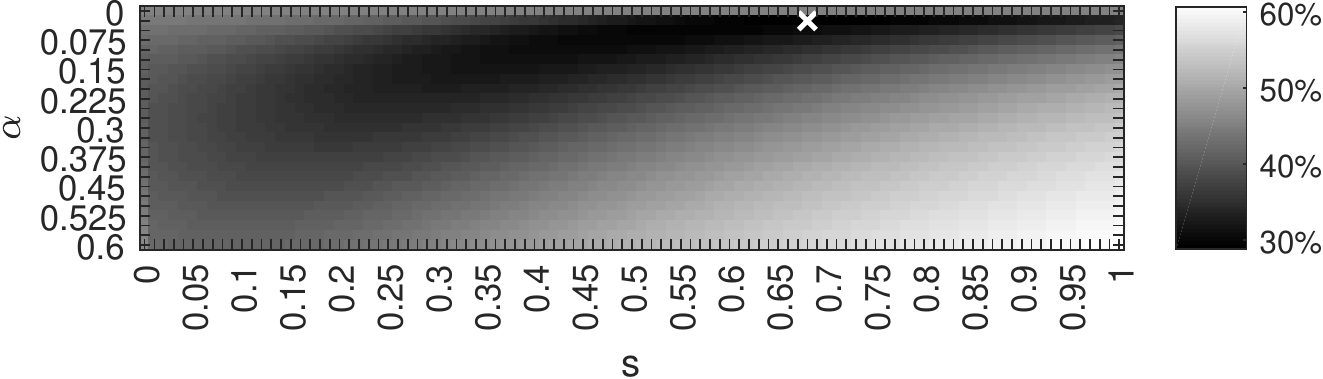}
\caption{}
\end{subfigure}

\begin{subfigure}{0.6\textwidth}
\includegraphics[width=\textwidth]{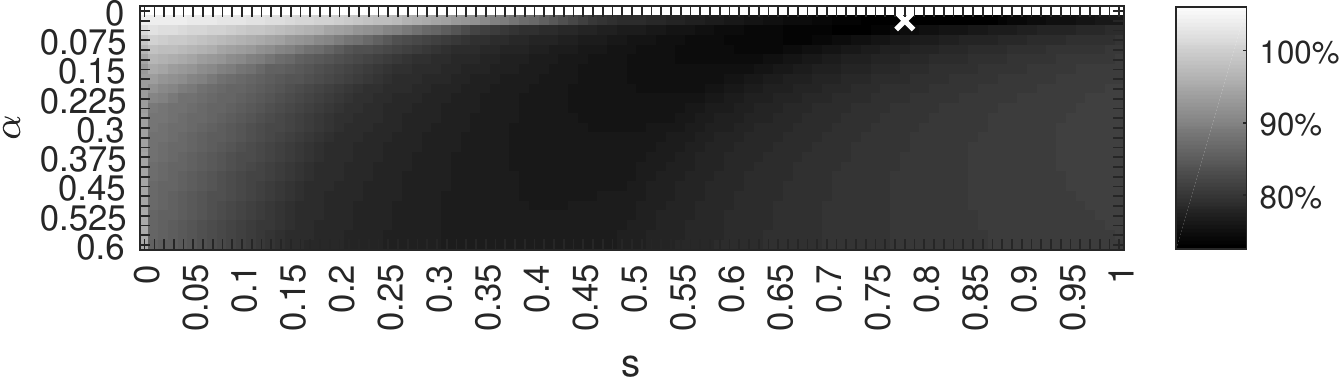}
\caption{}
\end{subfigure}
\end{center}
\caption{Error surfaces from Flag phantom data using (A) $L^1$-norm, (B) $L^2$-norm and (C) $L^\infty$-norm. Regularization parameters values are $\alpha \in \{0, 0.025, 0.050, \dots, 0.600\}$ and $s \in \{0,0.01,0.02,\dots,1.0\}$. }
\label{fig.flagerrsurf}
\end{figure}

\begin{figure}
\begin{center}
\begin{subfigure}{0.25\textwidth}
\includegraphics[width=\textwidth]{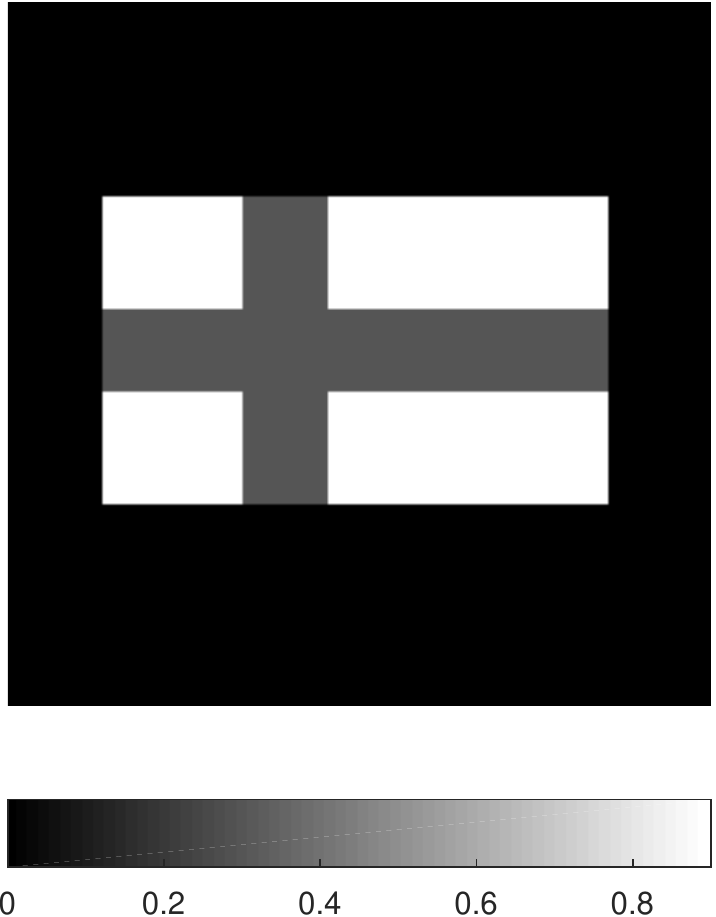}
\caption{} \label{fig.flag}
\end{subfigure} 
~
\begin{subfigure}{0.25\textwidth}
\includegraphics[width=\textwidth]{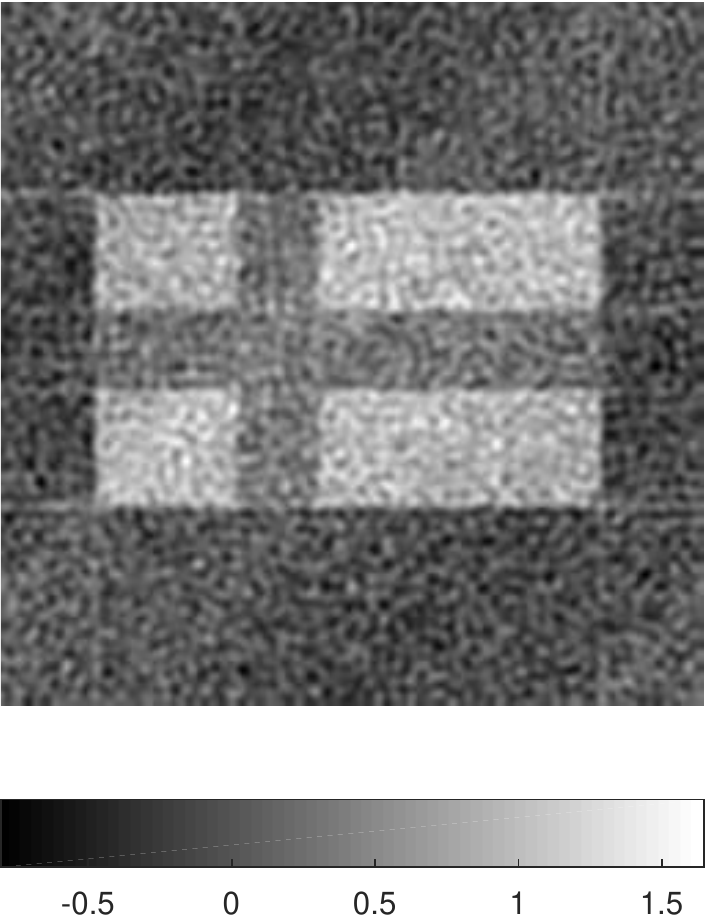}
\caption{} \label{fig.flagnonreg}
\end{subfigure}

\begin{subfigure}{0.25\textwidth}
\includegraphics[width=\textwidth]{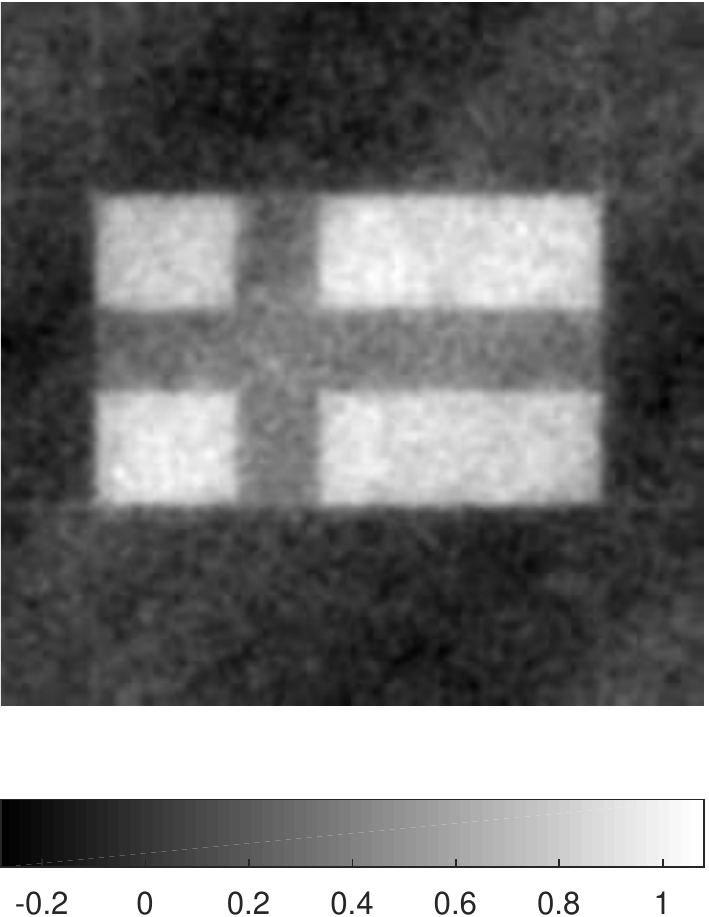}
\caption{} \label{fig.flag1}
\end{subfigure}
\begin{subfigure}{0.25\textwidth}
\includegraphics[width=\textwidth]{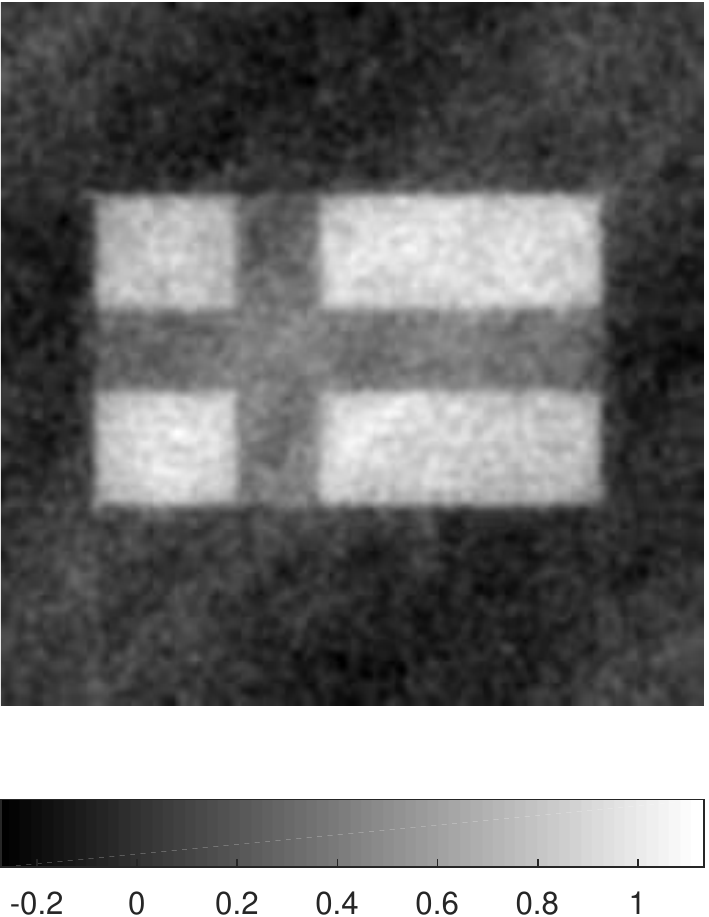}
\caption{} \label{fig.flag2}
\end{subfigure}
\begin{subfigure}{0.25\textwidth}
\includegraphics[width=\textwidth]{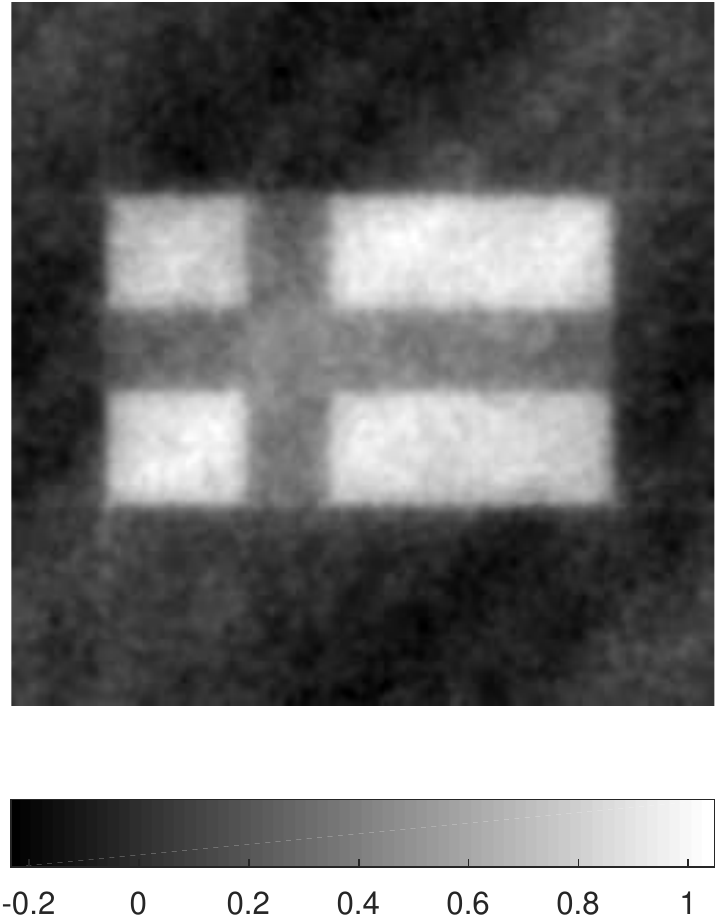}
\caption{} \label{fig.flaginf}
\end{subfigure}

\begin{subfigure}{0.25\textwidth}
\includegraphics[width=\textwidth]{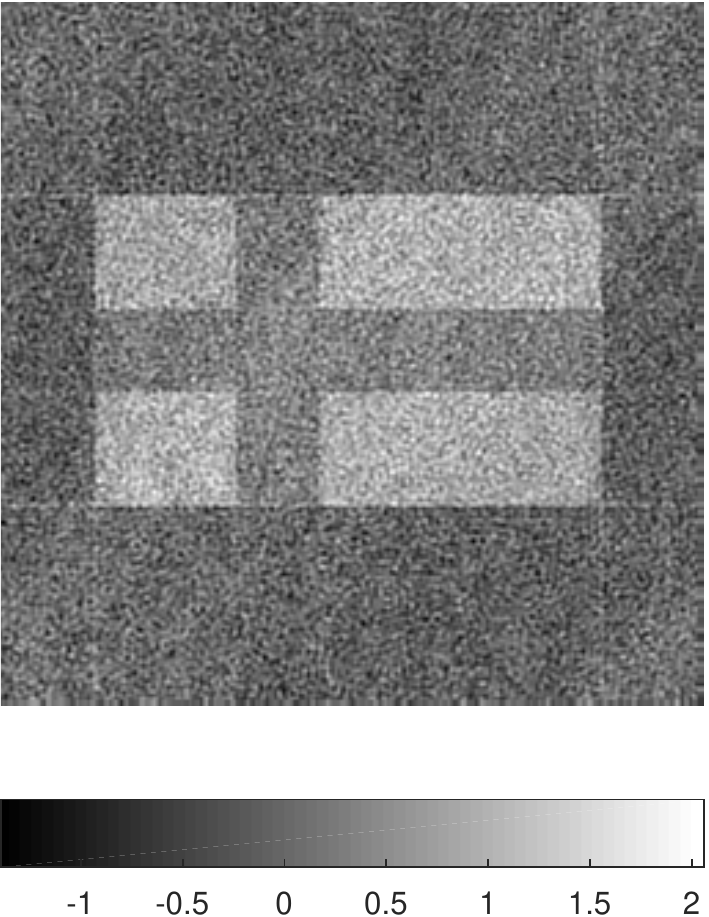}
\caption{} \label{fig.flagN100}
\end{subfigure}
\begin{subfigure}{0.25\textwidth}
\includegraphics[width=\textwidth]{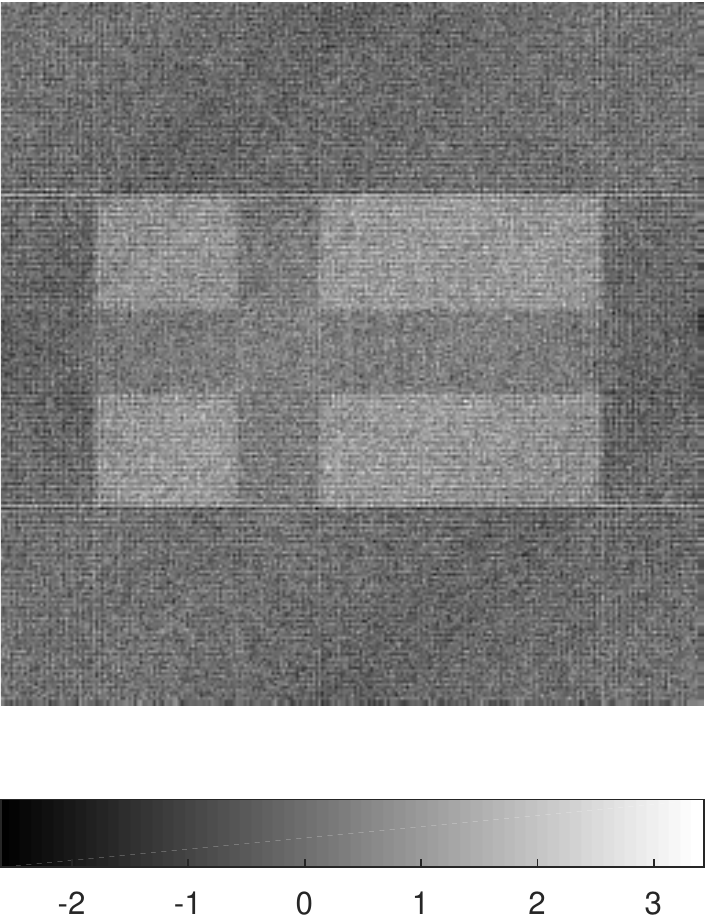}
\caption{} \label{fig.flagN150}
\end{subfigure}
\begin{subfigure}{0.25\textwidth}
\includegraphics[width=\textwidth]{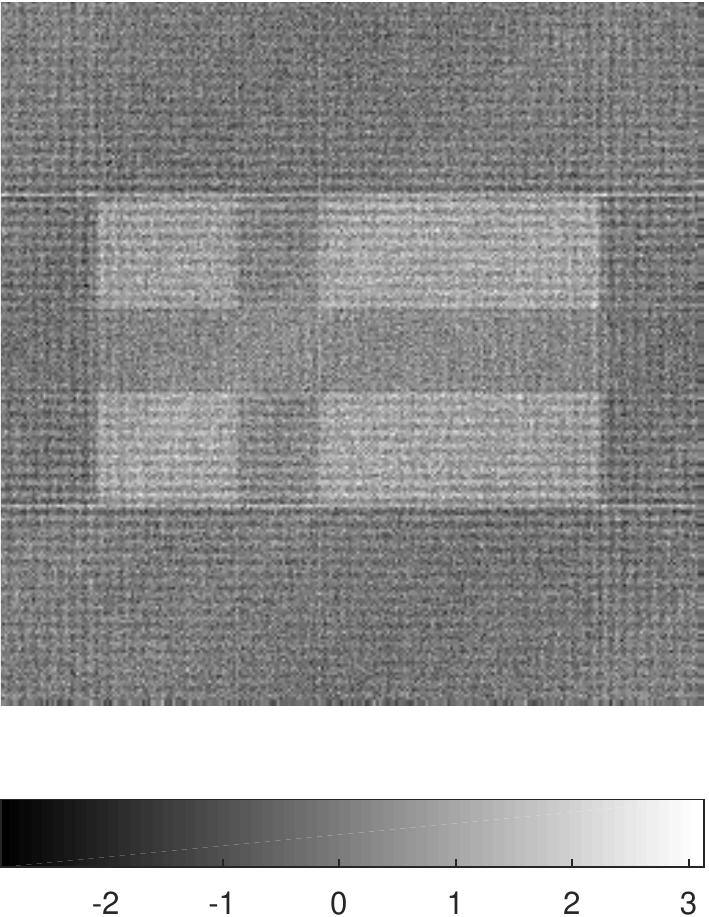}
\caption{} \label{fig.flagN200}
\end{subfigure}

\end{center}
\caption{(A) Flag phantom, (B) non-regularized reconstruction and (C-E) regularized reconstructions respectively with $L^1$-, $L^2$- and $L^\infty$-norm based choice of reconstruction values. (F-H) Non-regularized reconstruction with increased cutoff radii of the Fourier series, $r=100, 150, 200$, respectively. }
\label{fig.flagreconstructions}
\end{figure}

\subsubsection{Forward model~$\mathcal{A}_{\T^2}$ using the torus-projection and Radon data}

\begin{figure}
\begin{subfigure}{0.25\textwidth}
\includegraphics[width=\textwidth]{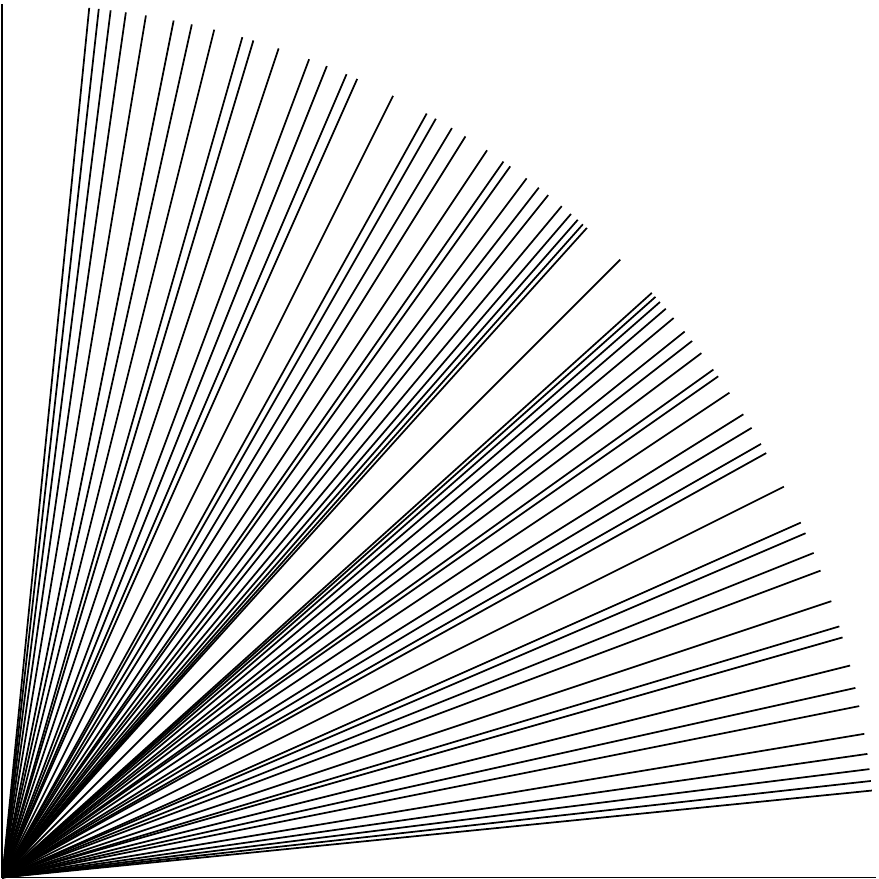}
\caption{}
\end{subfigure}
\begin{subfigure}{0.25\textwidth}
\includegraphics[width=\textwidth]{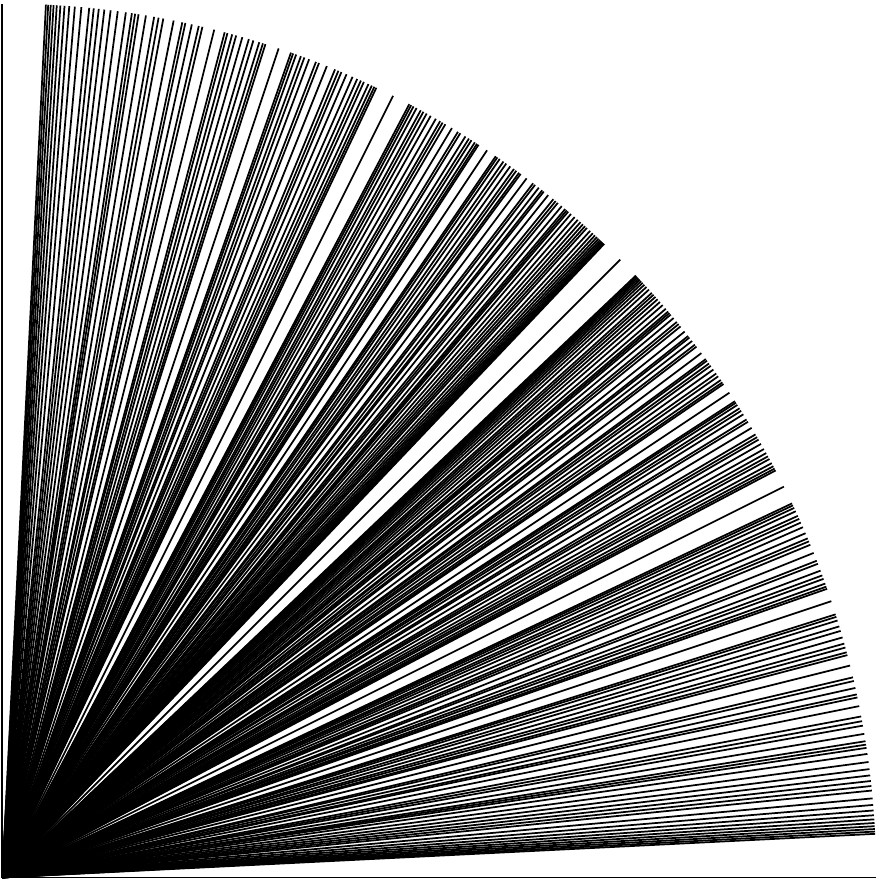}
\caption{}
\end{subfigure}
\begin{subfigure}{0.25\textwidth}
\includegraphics[width=\textwidth]{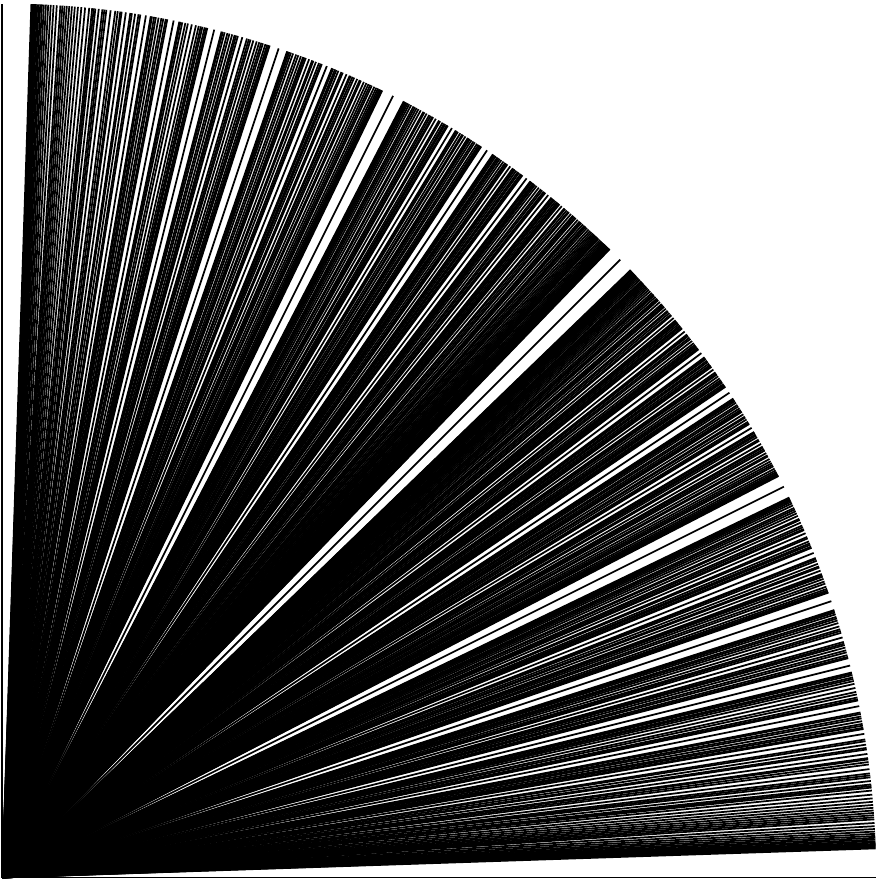}
\caption{}
\end{subfigure}
\caption{Visualization of the Radon projection angles from~$0^\circ$ to~$90^\circ$ that are required in the reconstruction of the Fourier series coefficients up to radii $r=10, 20, 30$, respectively. Each line represents a projection direction.}
\label{fig.torusradonangles}
\end{figure}

To test how a Torus CT algorithm would work with experimental data acquisition, we computed Radon transform of the phantoms and projected it to~$\T^2$ using the model~$\mathcal{A}_{\T^2}$ with noise on each data point on~$Rf(v)_k$. More precisely, we simulated data according to formula~\eqref{eq:T2forward} where each~$Rf(v)_k$ was replaced by noisy data $Rf(v)_k+\mathcal{E}$ where $\mathcal{E} \sim \mathcal{N}(0,\sigma^2)$ with $\sigma = 0.02$. 

The projection directions for Radon transform were computed such that they determined the Fourier coefficients up to radius $r=50$. An illustration of how the projection directions in $(0,90)^\circ$ are distributed is shown in Figure~\ref{fig.torusradonangles}, and the remaining projection directions are reflections of the projection directions in $(0,90)^\circ$ about the $y$-axis. In total, with $r=50$, there are 3097 unique projection directions. Two major concentrations of the directions are close to $45^\circ$, both above and below, but also smaller concentrations are found elsewhere, e.g., close to~$22.5^\circ$.

The reconstructions from data computed with~$\mathcal{A}_{\T^2}$ are shown in Figure~\ref{fig.radonrecs}. Shepp--Logan (Figures~\ref{fig:radonA} and~\ref{fig:radonD}) and $30^\circ$ rotated Flag (Figures~\ref{fig:radonC} and~\ref{fig:radonF}) are reconstructed well even with the noisy data. However, the non-rotated Flag phantom (Figures~\ref{fig:radonB} and~\ref{fig:radonE}) is rather poor.
{
As Figure~\ref{fig.torusradonangles} already showed, the torus-optimized data does not contain information uniformly of all directions.
Instability and thus blurriness is expected where the available directions shown in Figure~\ref{fig.torusradonangles} are sparse.
The most prominent example is the coordinate axis.
Hence, it was expected that recovery of singularities near coordinate axis directions is practically unstable \cite{Q93,Q17}. This unfavorable feature has motivated us to study a modification of the inversion method which is expected to succeed better in practice. This modified approach and the related simulations are presented in section \ref{ssec:numexp.additional}.
} With the Shepp--Logan phantom, the features are clearly detected, especially in the noise-free case (Figure~\ref{fig:radonA}) indicating potential in the technique. Regularized solutions are shown in Figures~\ref{fig:radonG}), \ref{fig:radonH} and~\ref{fig:radonI} from the Shepp--Logan, the non-rotated and the rotated Flag phantoms, repectively. The regularization smoothed the reconstructions, decreased their dynamic range and no additional features were revealed from the noise. The regularization parameter values were $\alpha=0.75$ and $s=0.5$, chosen with manual experimentation.

\begin{table}[hb]
\caption{Errors in reconstructions computed with $\mathcal{A}_{\T^2}$ and the FBP.}
\label{table.fbperrtable}
\begin{tabular}{l | 
S[table-format=3.0,table-space-text-post = \%]
S[table-format=3.0,table-space-text-post = \%]
S[table-format=3.0,table-space-text-post = \%] |
S[table-format=3.0,table-space-text-post = \%]
S[table-format=3.0,table-space-text-post = \%]
S[table-format=3.0,table-space-text-post = \%] }
& \multicolumn{1}{c}{Shepp--Logan} & \multicolumn{1}{c}{Flag} & \multicolumn{1}{c|}{Rotated Flag} & \multicolumn{1}{c}{Shepp--Logan} & \multicolumn{1}{c}{Flag} & \multicolumn{1}{c}{Rotated Flag} \\
\hline
& \multicolumn{3}{c|}{$\mathcal{A}_ {\T^2}$ with noiseless data ($\mathcal{E}=0$)} & \multicolumn{3}{c}{$\mathcal{A}_ {\T^2}$ with noisy data ($\mathcal{E} \sim \mathcal{N}(0,\sigma^2)$)} \\
$\epsilon_1^{0,0}$      & 313\% & 298\% & 302\% & 310\% & 300\% & 301\% \\
$\epsilon_2^{0,0}$      & 161\% & 171\% & 168\% & 162\% & 172\% & 167\% \\
$\epsilon_\infty^{0,0}$ & 75\% & 108\% & 121\% & 79\% & 125\% & 125\% \\
& \multicolumn{3}{c|}{} & \multicolumn{3}{c}{Regularized reconstruction from noisy data} \\
$\epsilon_1^{\alpha,s}$ & & & & 331\% & 305\% & 303\% \\
$\epsilon_2^{\alpha,s}$ & & & & 170\% & 174\% & 173\% \\
$\epsilon_\infty^{\alpha,s}$ & & & & 53\% & 99\% & 100\% \\
& \multicolumn{3}{c|}{FBP with torus optimized angles} & \multicolumn{3}{c}{FBP with evenly distributed angles} \\
$\epsilon_1$ & 73\% & 59\% & 67\% & 64\% & 55\% & 56\% \\
$\epsilon_2$ & 59\% & 41\% & 51\% & 54\% & 45\% & 45\% \\
$\epsilon_\infty$ & 155\% & 87\% & 127\% & 129\% & 93\% & 120\% \\
\end{tabular}
\end{table}

For comparison, we computed the respective FBP reconstructions (shown in Figure~\ref{fig.fbprecs}) with Matlab's \texttt{iradon} function using default settings. The projection data $R_\texttt{radon} f(v)_k + \mathcal{E}$ was down sampled by factor of $2$ with \texttt{imresize} to match reconstruction resolution $256\times 256$. It seems, that the uneven distribution of projection angles
creates errors in reconstruction, since similar artefacts in horizontal, vertical and diagonal directions are seen also in the FBP reconstruction as in the ones computed with the Torus CT method in Figure~\ref{fig.radonrecs}. From the FBP this was expected as it is prone to streaking. In general, the FBP reconstructions are of good quality, since there is a lot of data available. With the same number of projections, 3097, but evenly distributed as they normally are, the FBP reconstruction are better quality than any other presented in this paper. 

The error 
$\epsilon_p = \norm{f-f_\mathrm{rec}^\mathrm{FBP}}_{L^p(\R^2)} / \norm{f}_{L^p(\R^2)}$
between the FBP reconstruction~$f_\mathrm{rec}^\mathrm{FBP}$ and the phantom~$f$ is tabulated in Table~\ref{table.fbperrtable}. When compared with~$\mathcal{A}_1$ and~$\mathcal{A}_2$ and Shepp--Logan and Flag phantoms, with all values of~$p$, the errors~$\epsilon_p$ are higher than errors of regularized Torus CT reconstructions~$\epsilon_p^{\alpha,s}$ shown in Table~\ref{table.errsurftable}. When compared with the errors of non-regularized reconstructions~$\epsilon_1^{0,0}$ and~$\epsilon_2^{0,0}$, the FBP and Torus CT are relatively close, but the error~$\epsilon_\infty^{0,0}$ of Torus CT is lower with the Shepp--Logan phantom and higher with the Flag phantom. 

For use with practical data acquisition {with commonly used equispaced projection angles}, Torus CT requires more work to handle the increased additive noise in~$\mathcal{A}_{\T^2}$, since the reconstructions have more noise outside of the support of the phantom than in the FBP reconstructions as seen in Figure~\ref{fig.radonrecs}. The respective errors~$\epsilon_1^{\alpha,s}$ and~$\epsilon_2^{\alpha,s}$ are higher than those of the FBP, presented in Table~\ref{table.fbperrtable}. Nevertheless, in terms of reconstructing the correct dynamical range of the objects, measured with~$\epsilon_\infty^{\alpha,s}$ and~$\epsilon_\infty$, the Torus CT method is equivalent or better than with the FBP. 

\begin{figure}
\begin{subfigure}{0.25\textwidth}
\includegraphics[width=\textwidth]{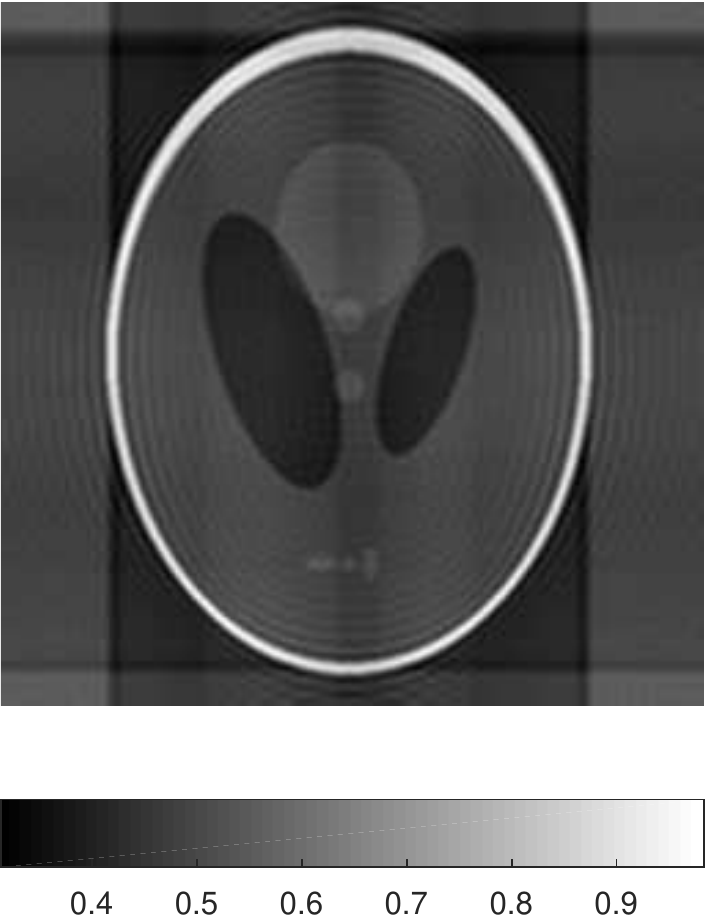}
\caption{} \label{fig:radonA}
\end{subfigure}
\begin{subfigure}{0.25\textwidth}
\includegraphics[width=\textwidth]{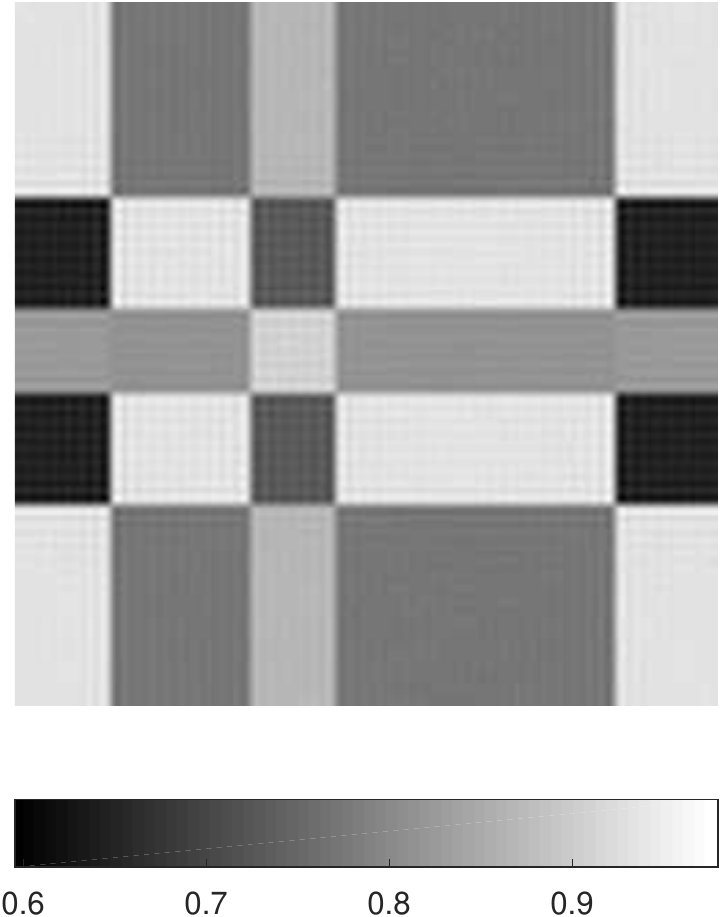}
\caption{} \label{fig:radonB}
\end{subfigure}
\begin{subfigure}{0.25\textwidth}
\includegraphics[width=\textwidth]{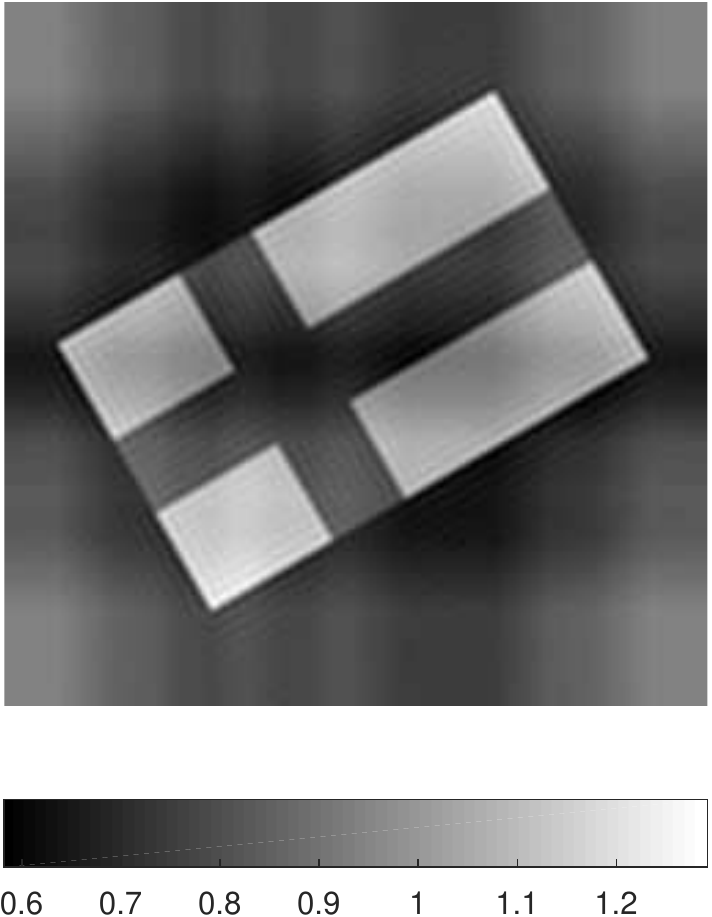}
\caption{} \label{fig:radonC}
\end{subfigure}

\begin{subfigure}{0.25\textwidth}
\includegraphics[width=\textwidth]{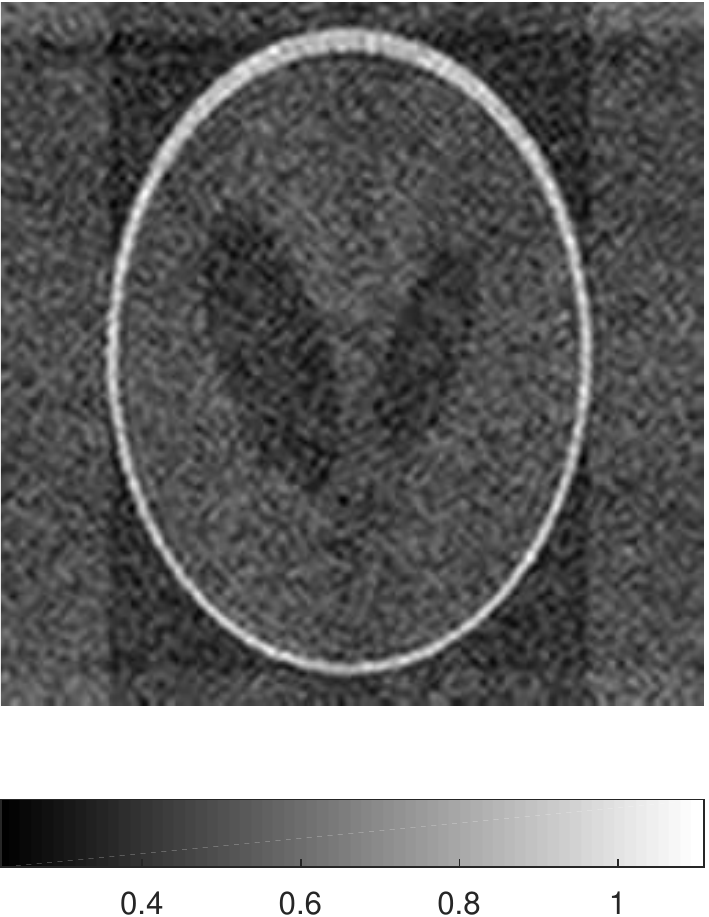}
\caption{} \label{fig:radonD}
\end{subfigure}
\begin{subfigure}{0.25\textwidth}
\includegraphics[width=\textwidth]{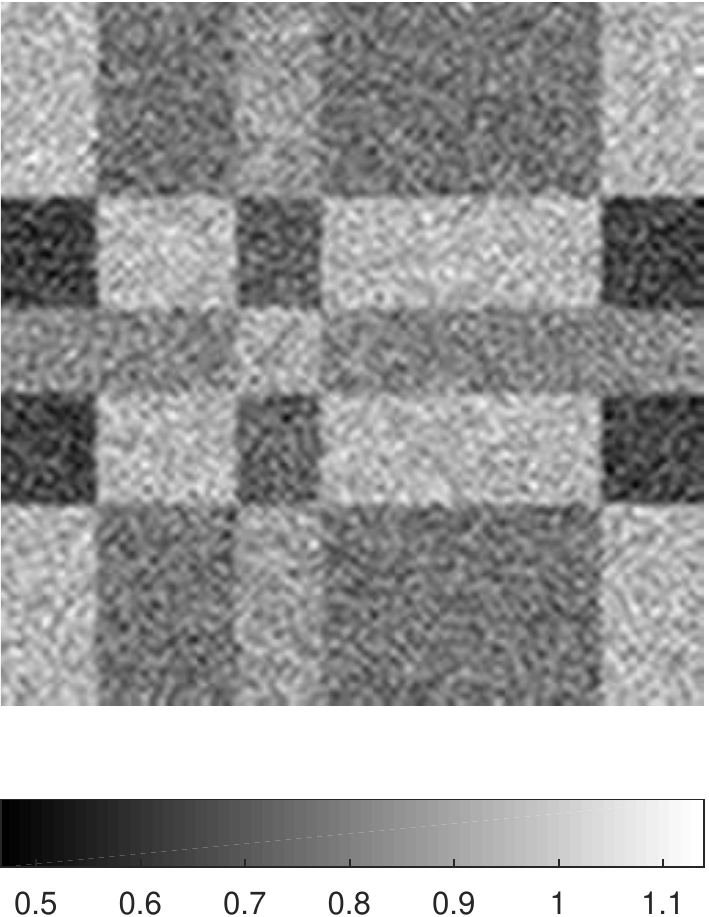}
\caption{} \label{fig:radonE}
\end{subfigure}
\begin{subfigure}{0.25\textwidth}
\includegraphics[width=\textwidth]{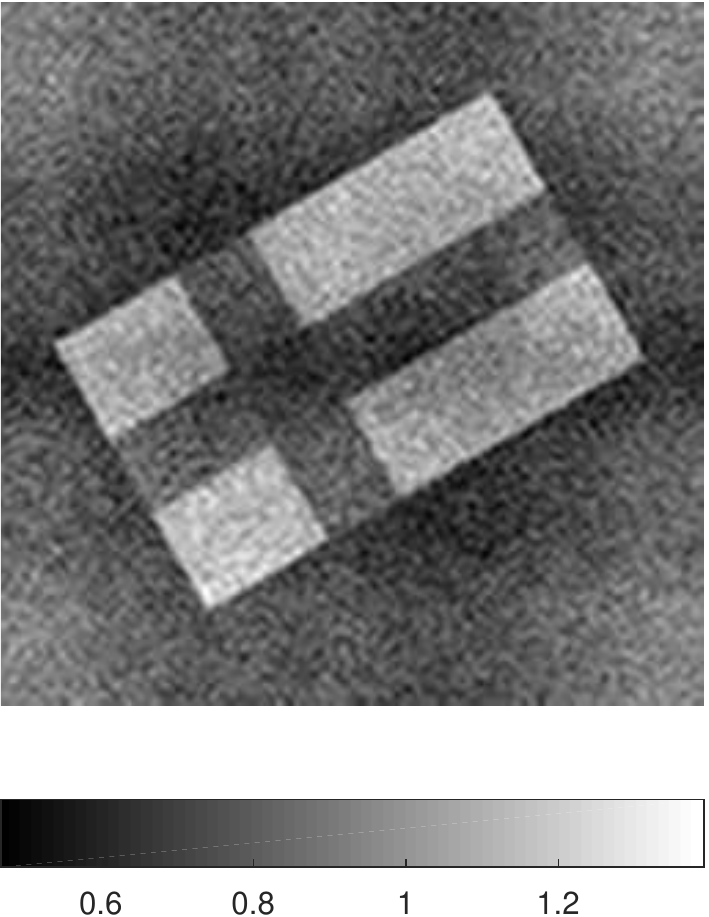}
\caption{} \label{fig:radonF}
\end{subfigure}

\begin{subfigure}{0.25\textwidth}
\includegraphics[width=\textwidth]{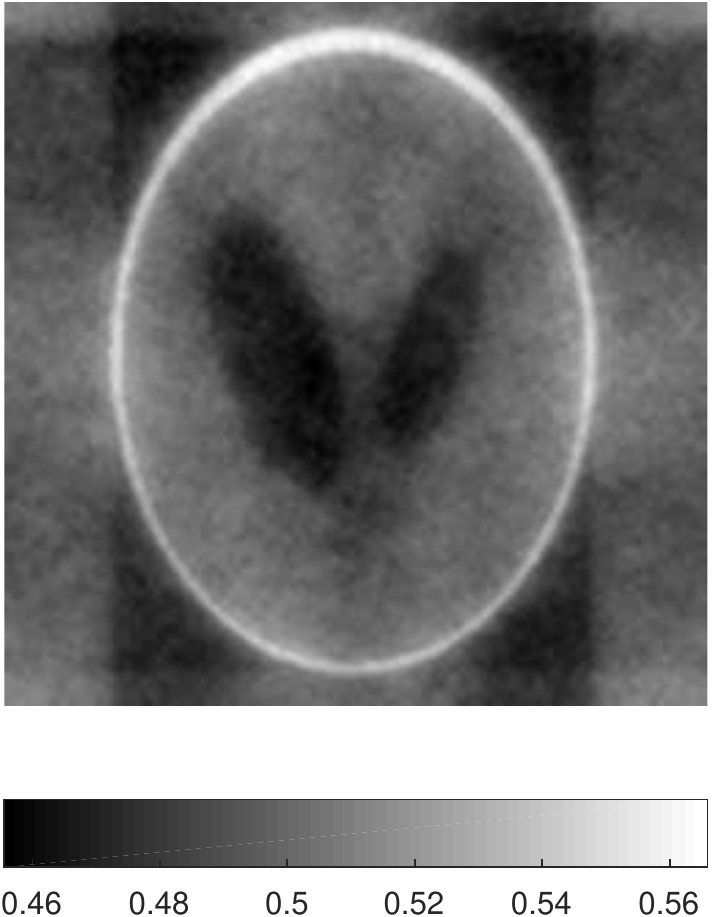}
\caption{} \label{fig:radonG}
\end{subfigure}
\begin{subfigure}{0.25\textwidth}
\includegraphics[width=\textwidth]{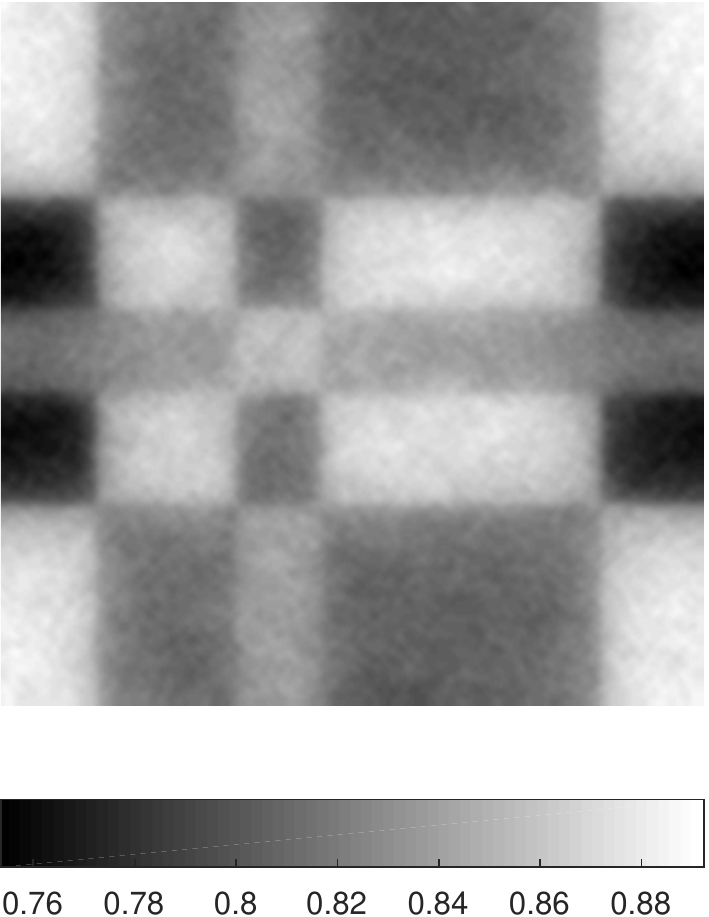}
\caption{} \label{fig:radonH}
\end{subfigure}
\begin{subfigure}{0.25\textwidth}
\includegraphics[width=\textwidth]{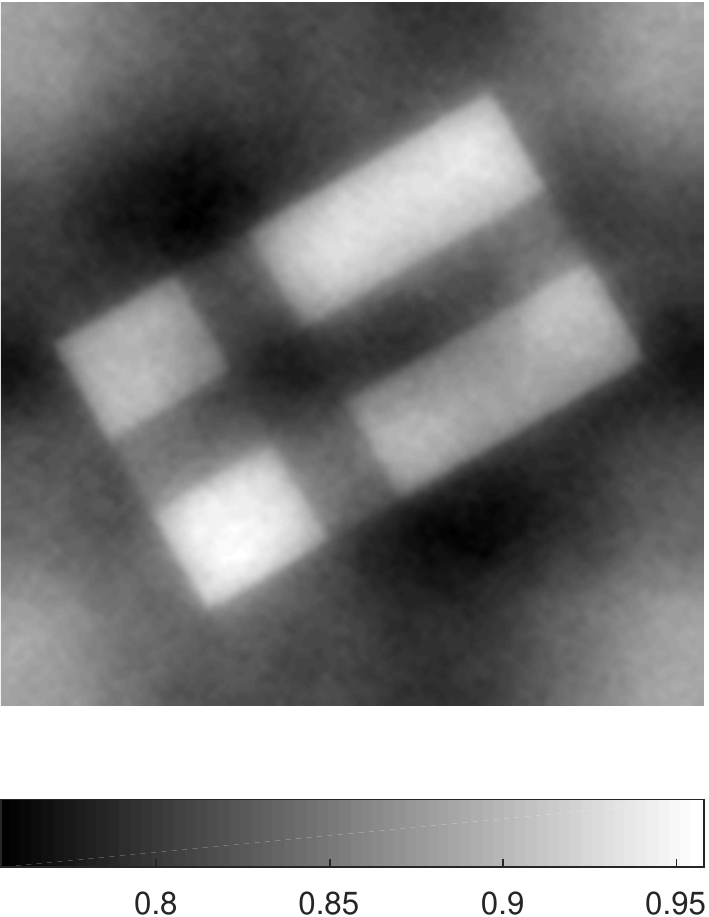}
\caption{} \label{fig:radonI}
\end{subfigure}
\caption{Reconstruction from (A)-(C) noise-less and (D)-(F) noisy Radon data mapped to torus of Shepp--Logan, Flag and Flag rotated $30^\circ$. (G)-(I) regularized reconstructions from respective noisy data.} \label{fig.radonrecs}
\end{figure}

\begin{figure}
\begin{subfigure}{0.25\textwidth}
\includegraphics[width=\textwidth]{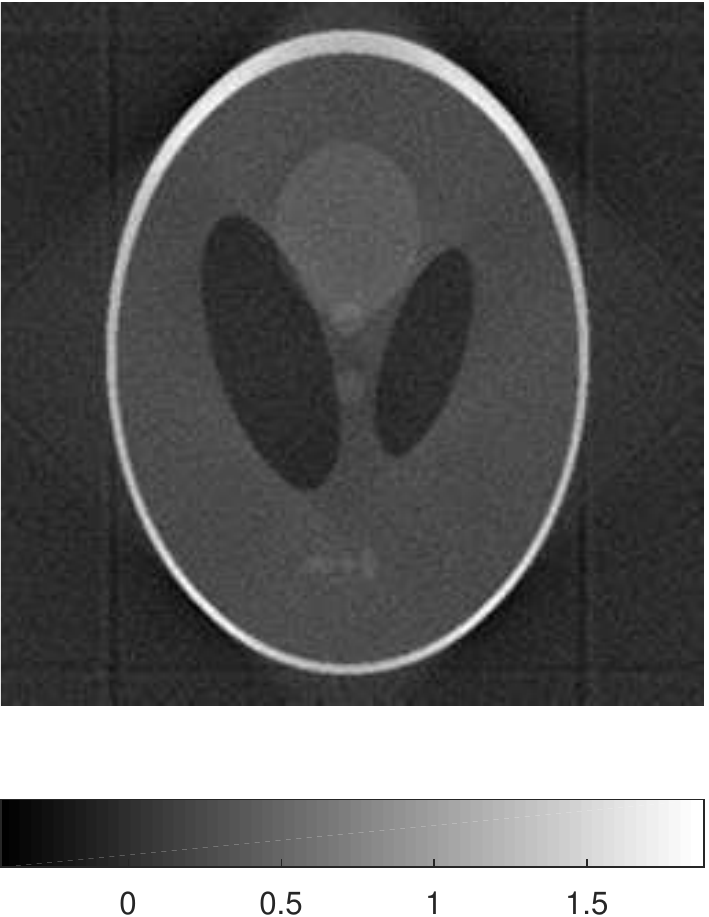}
\caption{}
\end{subfigure}
\begin{subfigure}{0.25\textwidth}
\includegraphics[width=\textwidth]{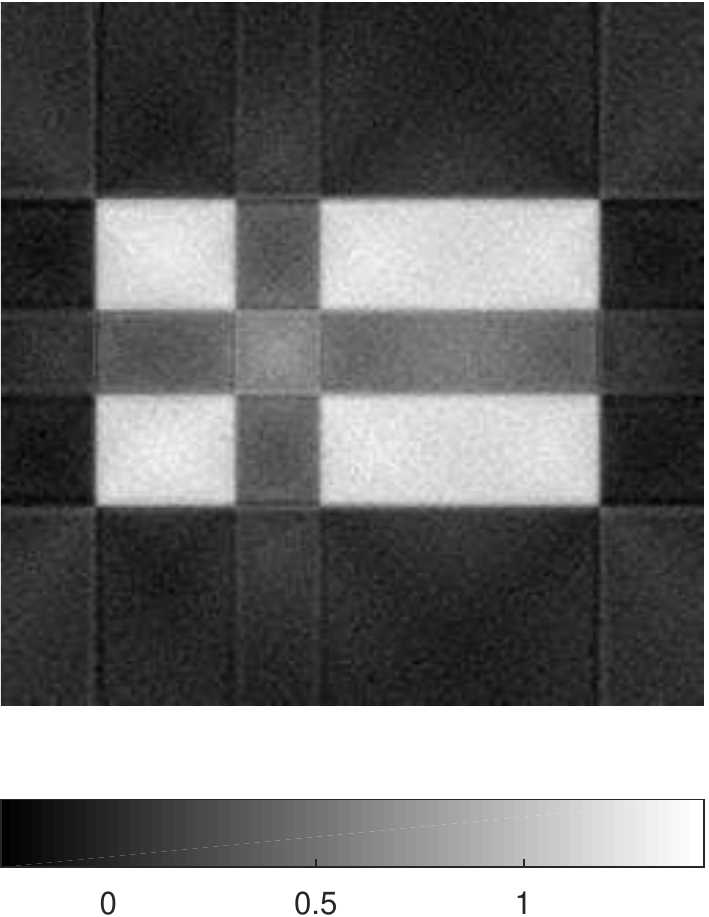}
\caption{}
\end{subfigure}
\begin{subfigure}{0.25\textwidth}
\includegraphics[width=\textwidth]{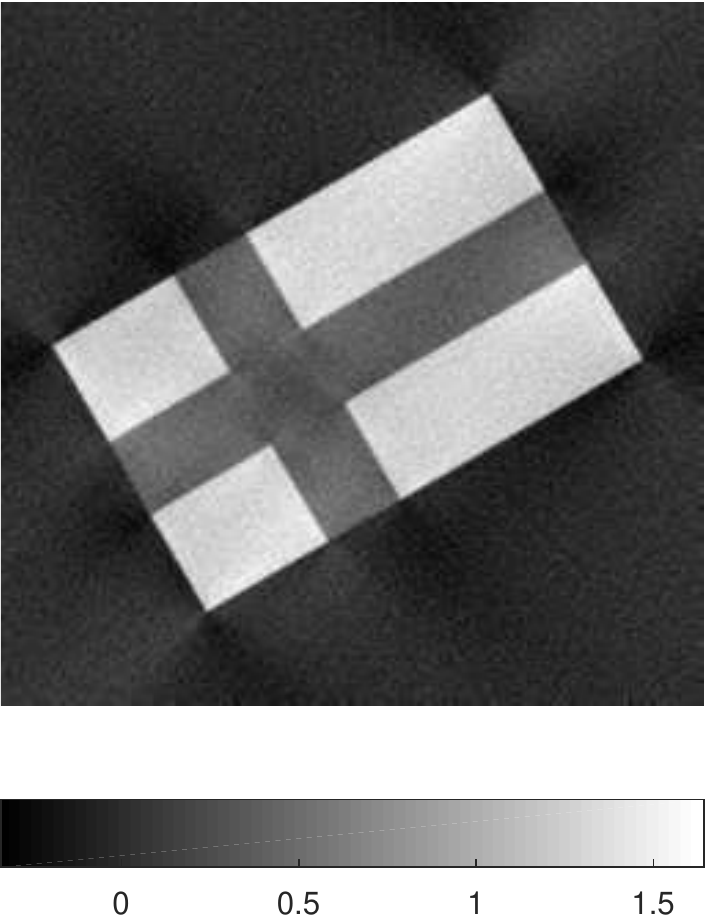}
\caption{}
\end{subfigure}

\begin{subfigure}{0.25\textwidth}
\includegraphics[width=\textwidth]{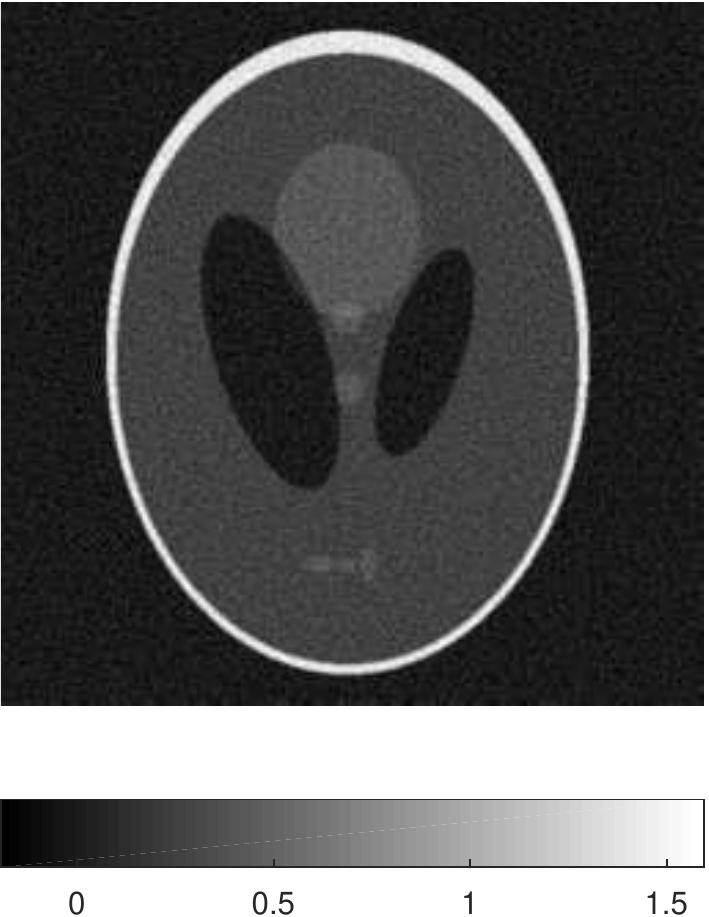}
\caption{}
\end{subfigure}
\begin{subfigure}{0.25\textwidth}
\includegraphics[width=\textwidth]{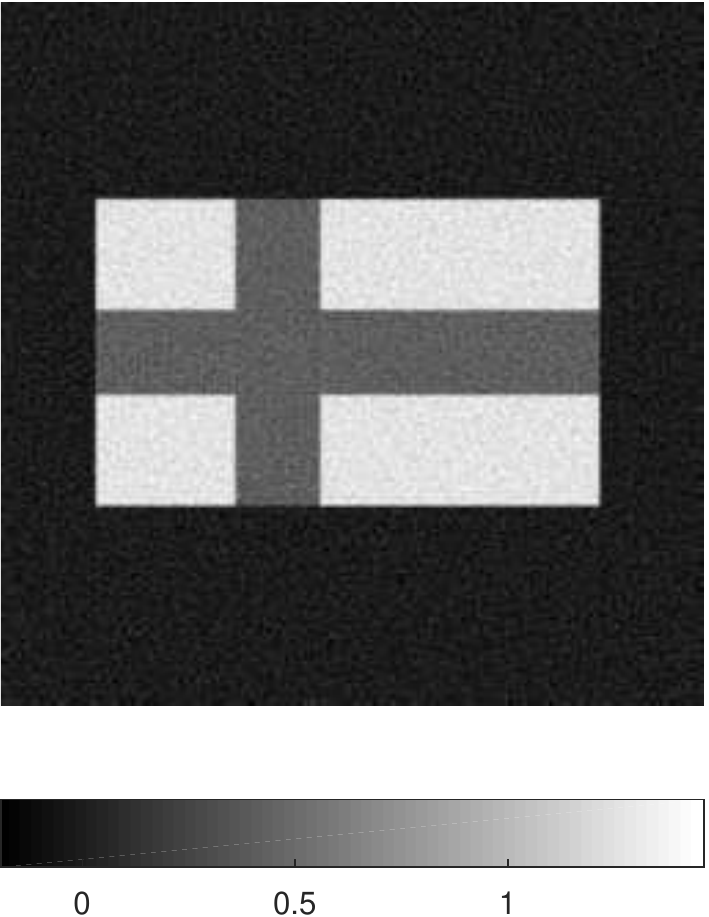}
\caption{}
\end{subfigure}
\begin{subfigure}{0.25\textwidth}
\includegraphics[width=\textwidth]{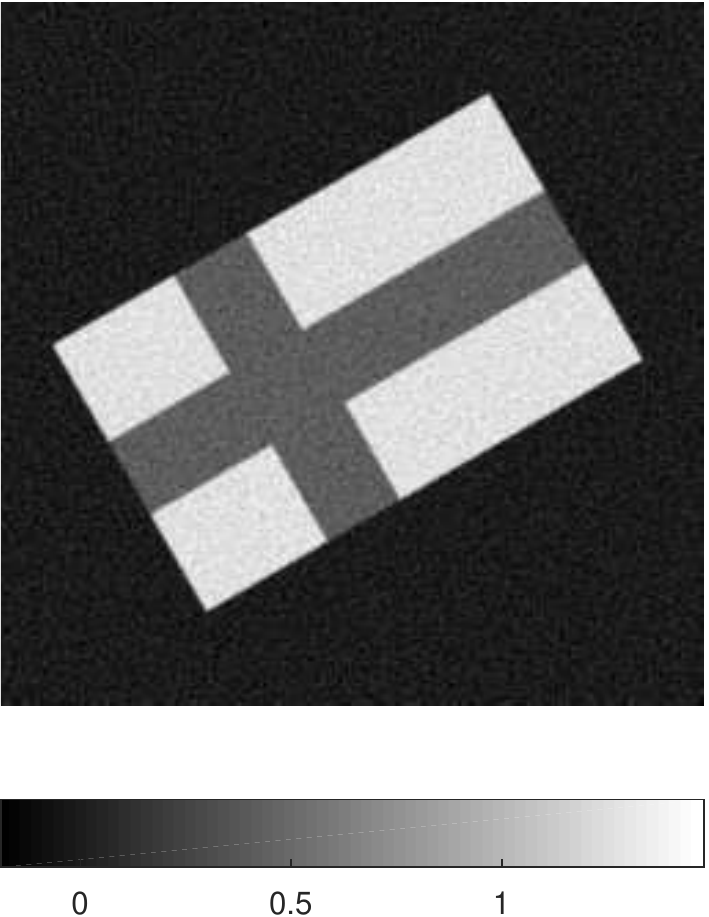}
\caption{}
\end{subfigure}
\caption{(A)-(C) Filtered backprojection reconstructions from Radon data with torus optimized angles. (D)-(F) Filtered backprojection reconstruction from equispaced projection angles with same amount of projections as in (A)-(C).}
\label{fig.fbprecs}
\end{figure}

\subsection{Rotating phantom on torus}
\label{ssec:numexp.additional}

\begin{figure}
\begin{subfigure}{0.25\textwidth}
\includegraphics[width=\textwidth]{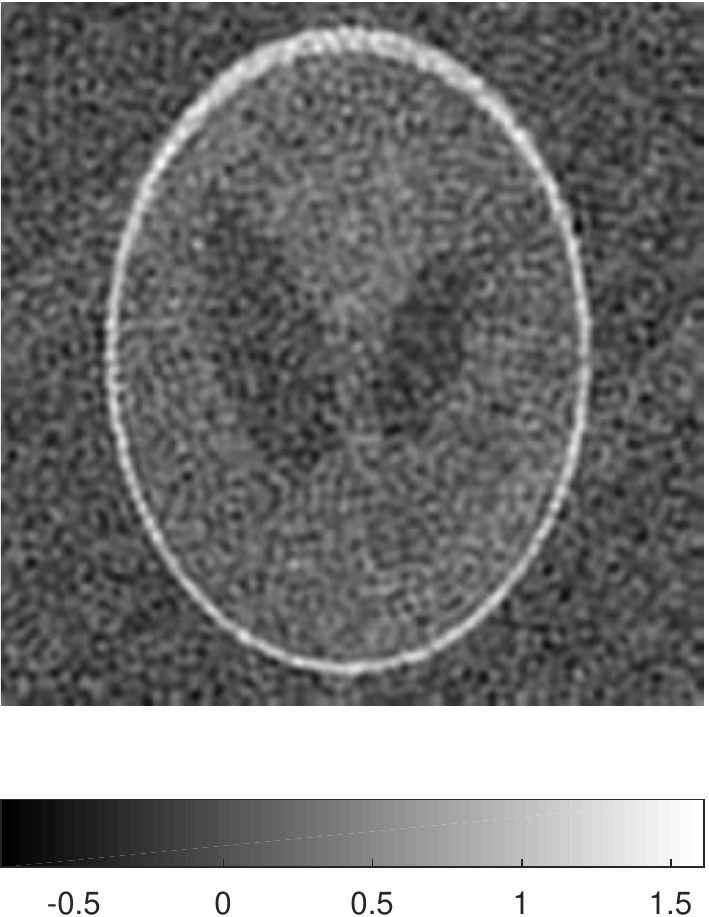}
\caption{$\Theta^{SL}_1$}
\end{subfigure}
\begin{subfigure}{0.25\textwidth}
\includegraphics[width=\textwidth]{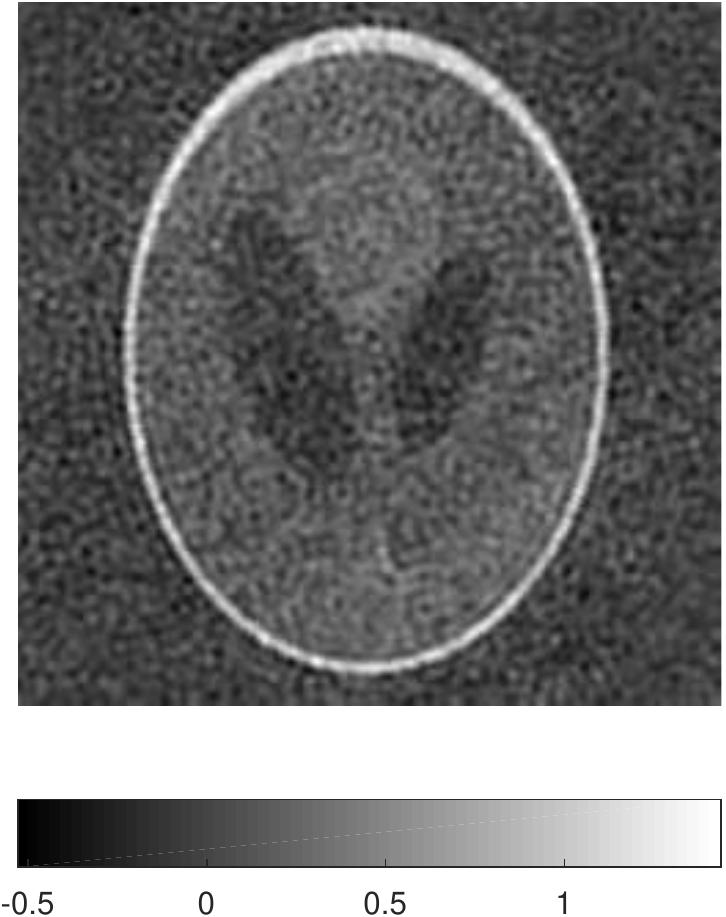}
\caption{$\Theta^{SL}_2$}
\end{subfigure}
\begin{subfigure}{0.25\textwidth}
\includegraphics[width=\textwidth]{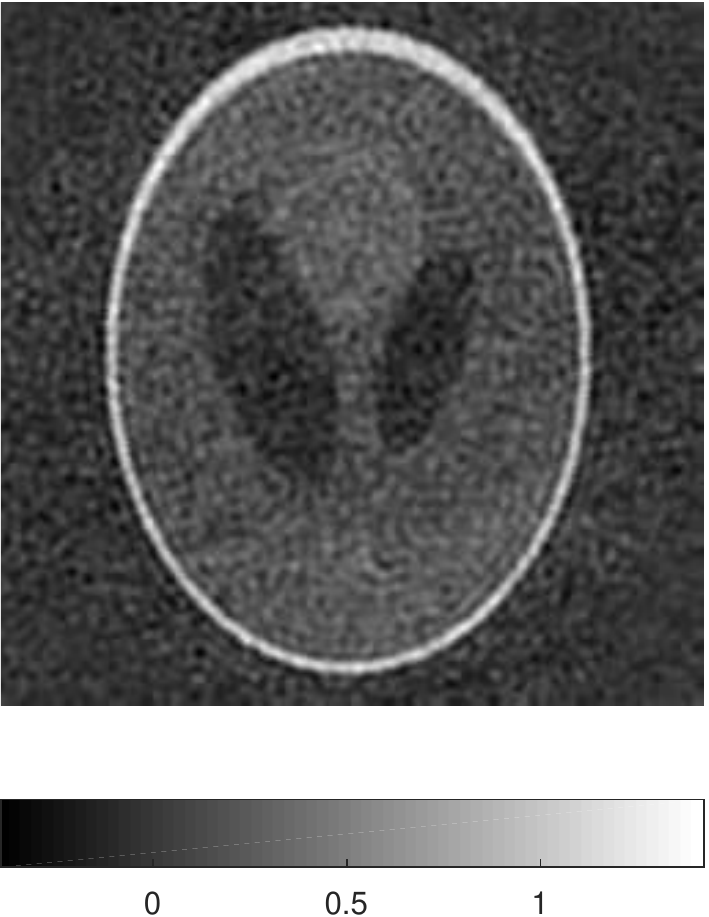}
\caption{$\Theta^{SL}_3$}
\end{subfigure}

\begin{subfigure}{0.25\textwidth}
\includegraphics[width=\textwidth]{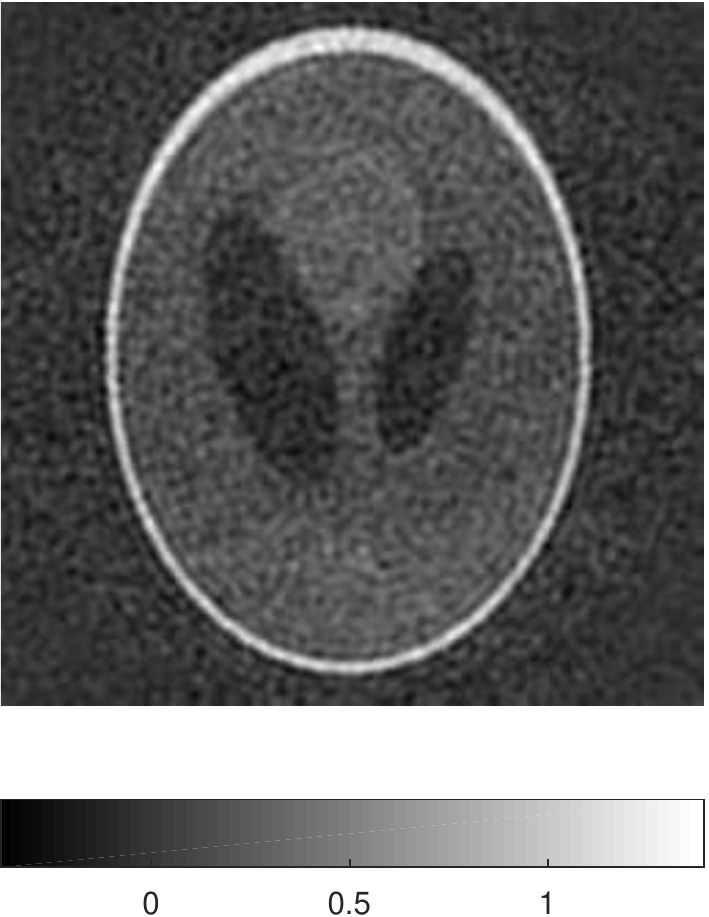}
\caption{$\Theta^{SL}_4$}
\end{subfigure}
\begin{subfigure}{0.25\textwidth}
\includegraphics[width=\textwidth]{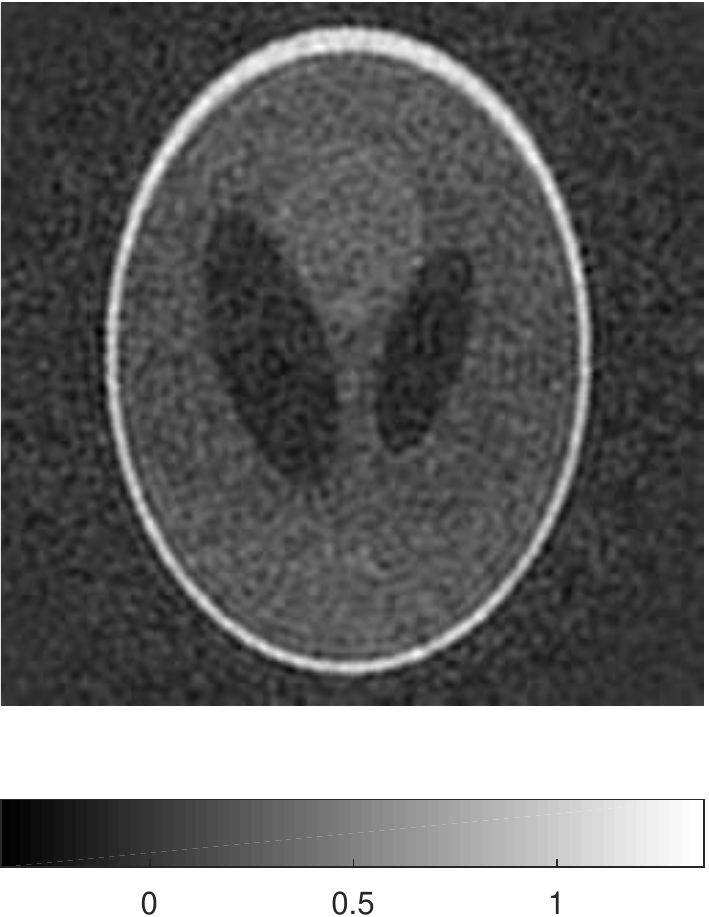}
\caption{$\Theta^{SL}_5$}
\end{subfigure}
\begin{subfigure}{0.25\textwidth}
\includegraphics[width=\textwidth]{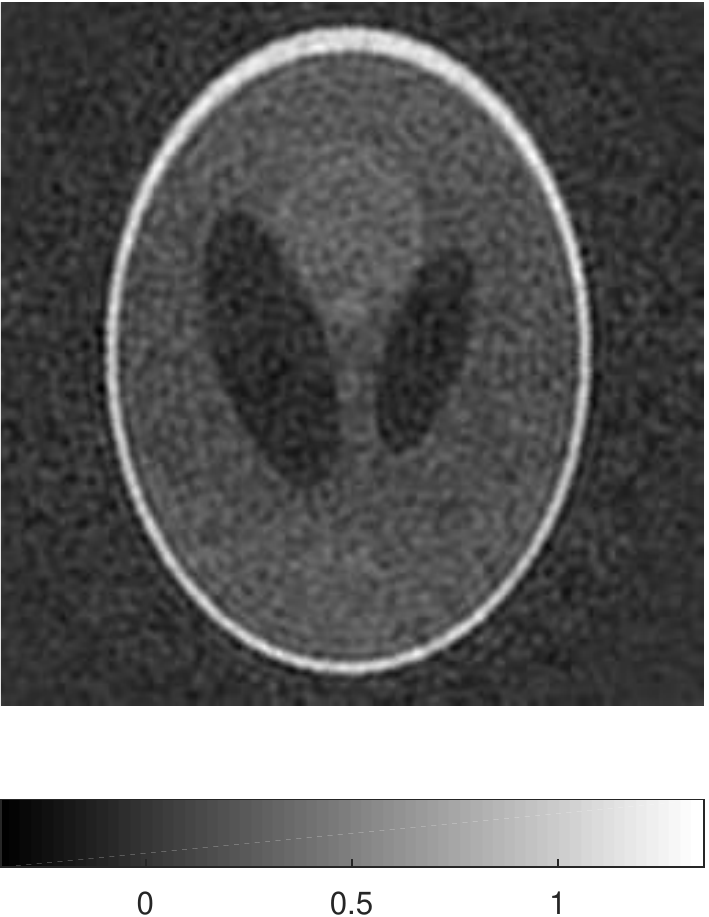}
\caption{$\Theta^{SL}_6$}
\end{subfigure}

\begin{subfigure}{0.25\textwidth}
\includegraphics[width=\textwidth]{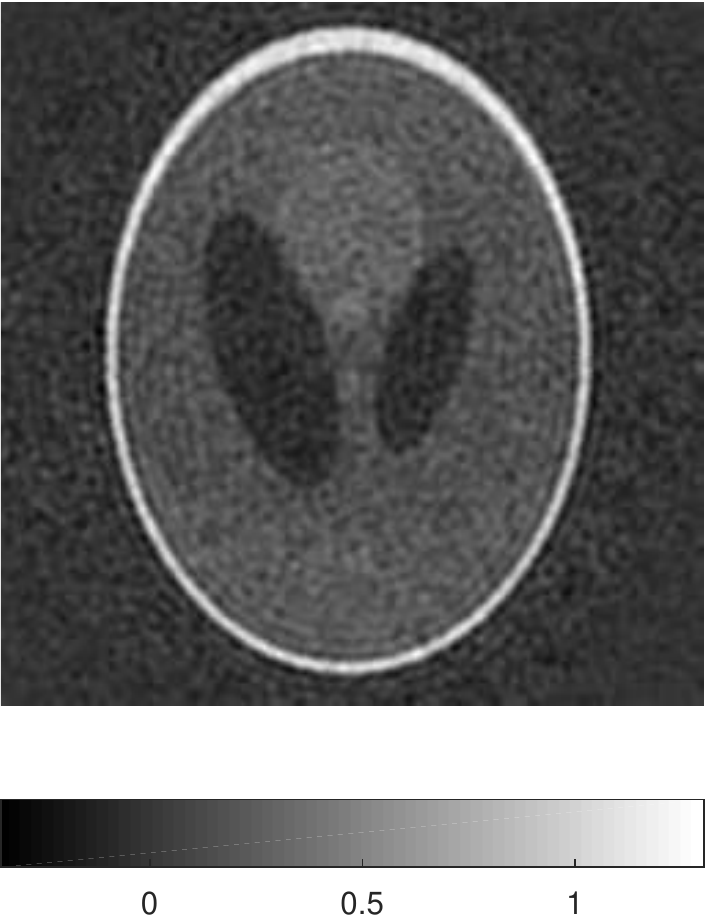}
\caption{$\Theta^{SL}_7$}
\end{subfigure}
\begin{subfigure}{0.25\textwidth}
\includegraphics[width=\textwidth]{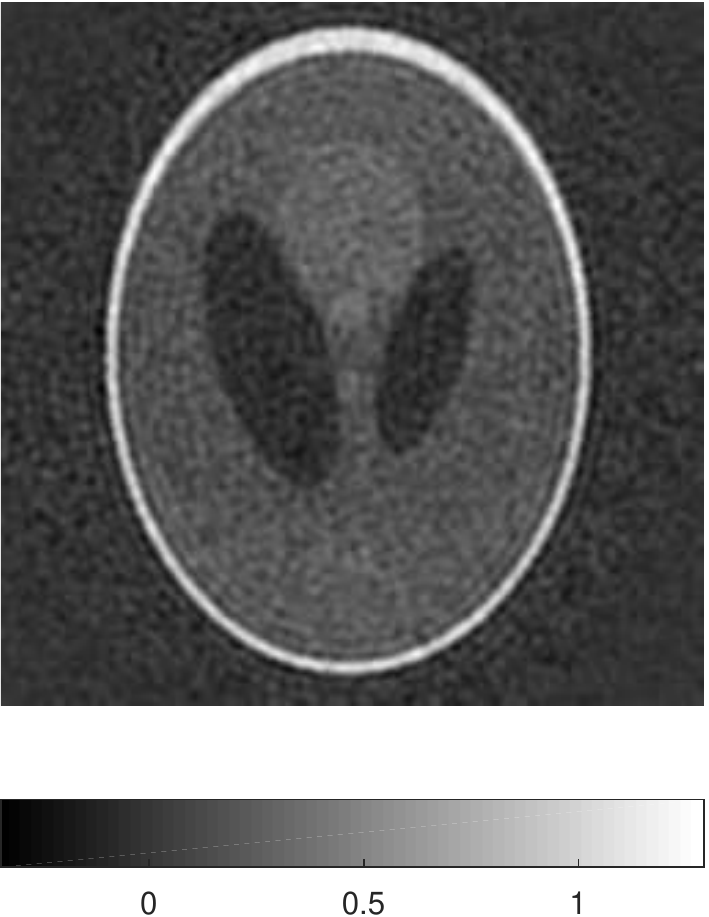}
\caption{$\Theta^{SL}_8$}
\end{subfigure}
\begin{subfigure}{0.25\textwidth}
\includegraphics[width=\textwidth]{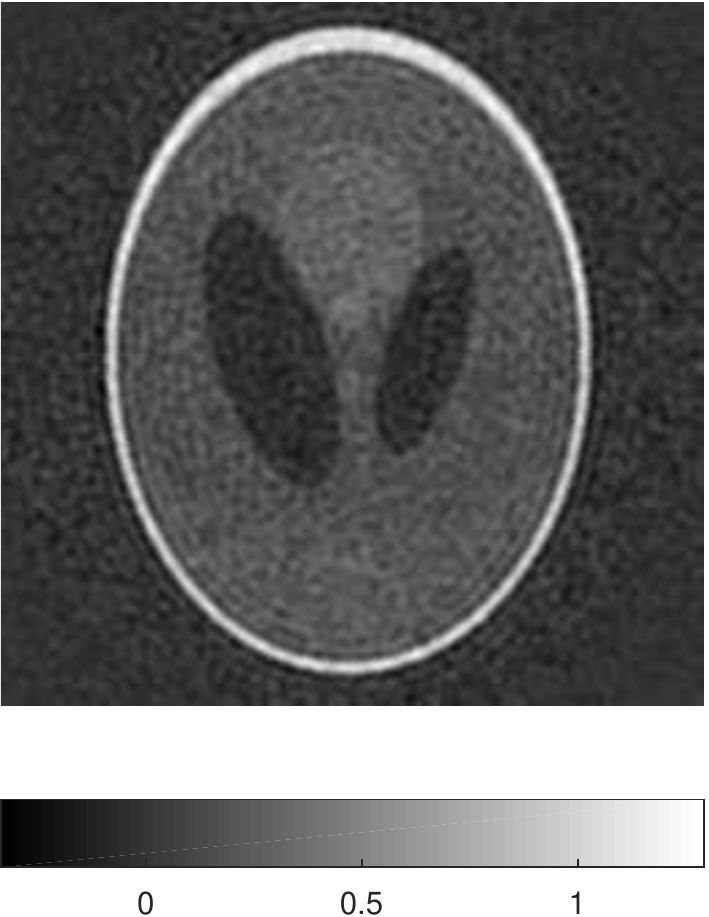}
\caption{$\Theta^{SL}_9$}
\end{subfigure}

\caption{Reconstructions based on data sets of Shepp--Logan phantom rotated with respective angles.}
\label{fig.rotationshepplogan}
\end{figure}

\begin{figure}
\begin{subfigure}{0.25\textwidth}
\includegraphics[width=\textwidth]{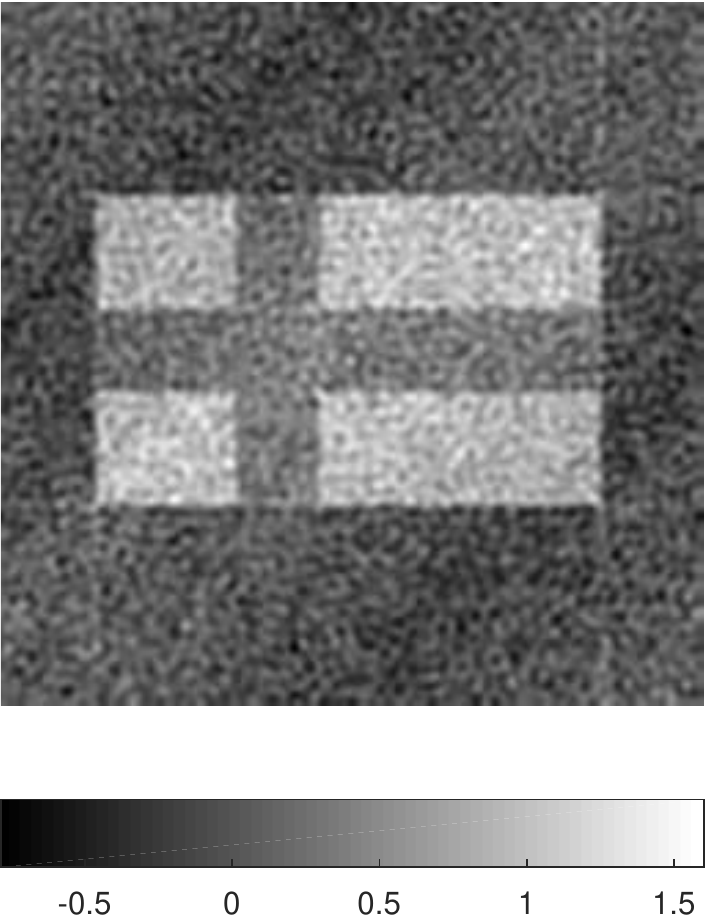}
\caption{$\Theta^F_1$}
\end{subfigure}
\begin{subfigure}{0.25\textwidth}
\includegraphics[width=\textwidth]{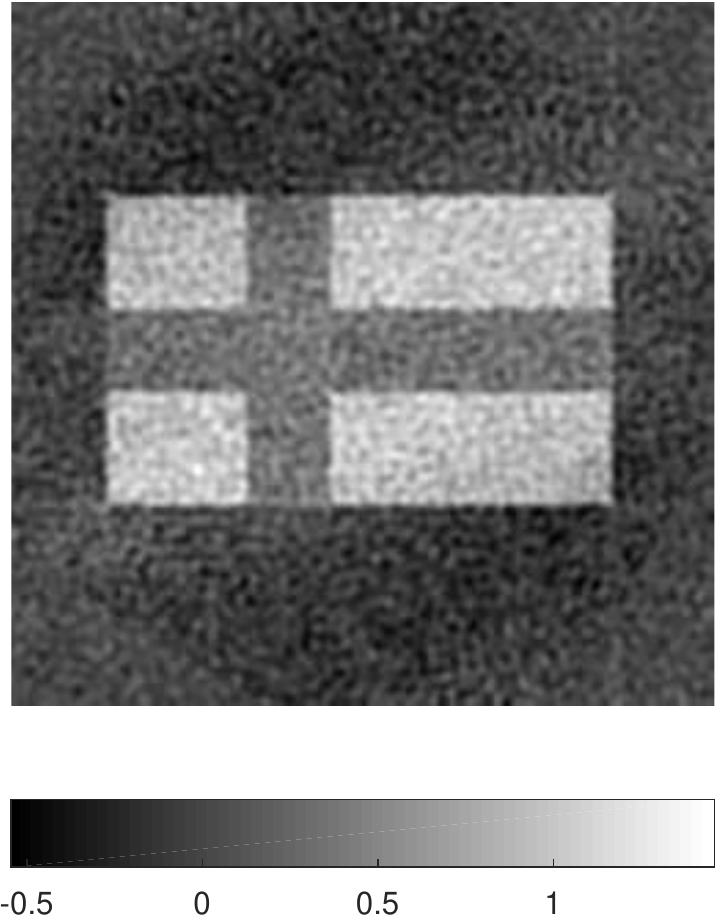}
\caption{$\Theta^F_2$}
\end{subfigure}
\begin{subfigure}{0.25\textwidth}
\includegraphics[width=\textwidth]{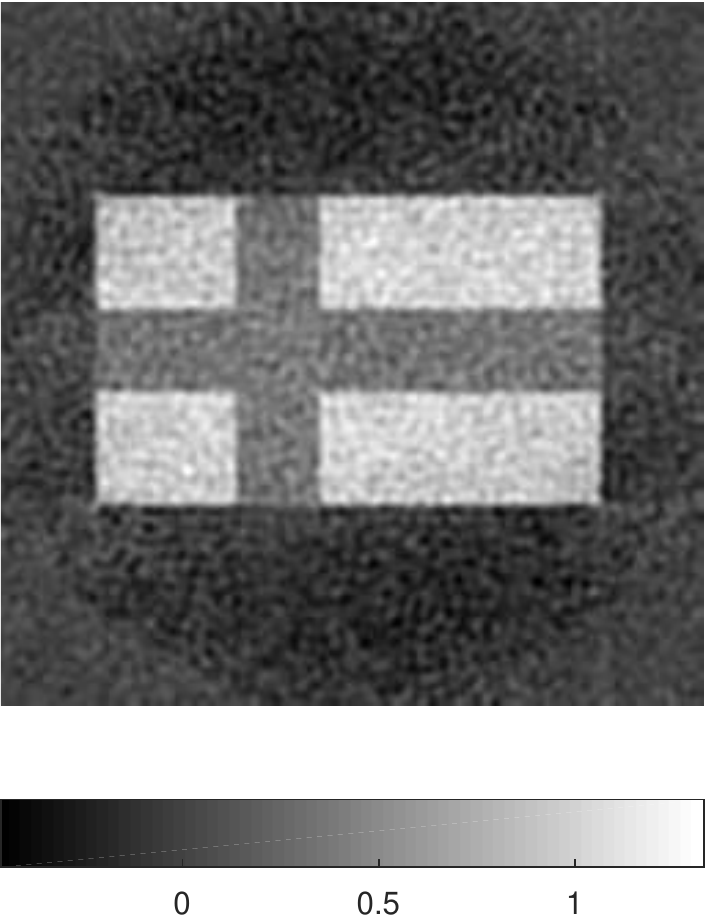}
\caption{$\Theta^F_3$}
\end{subfigure}

\begin{subfigure}{0.25\textwidth}
\includegraphics[width=\textwidth]{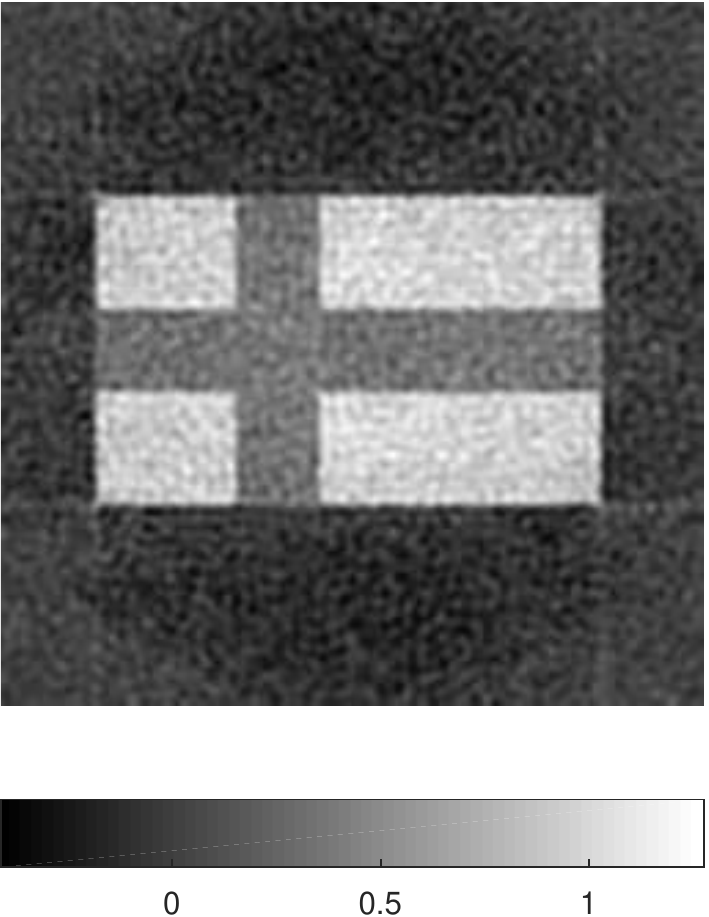}
\caption{$\Theta^F_4$}
\end{subfigure}
\begin{subfigure}{0.25\textwidth}
\includegraphics[width=\textwidth]{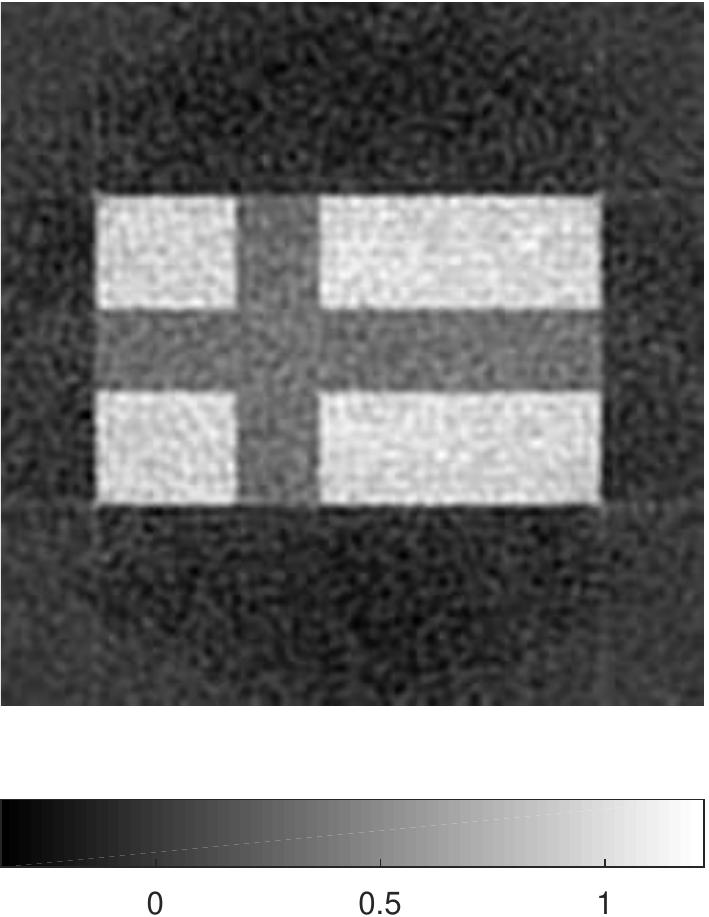}
\caption{$\Theta^F_5$}
\end{subfigure}
\begin{subfigure}{0.25\textwidth}
\includegraphics[width=\textwidth]{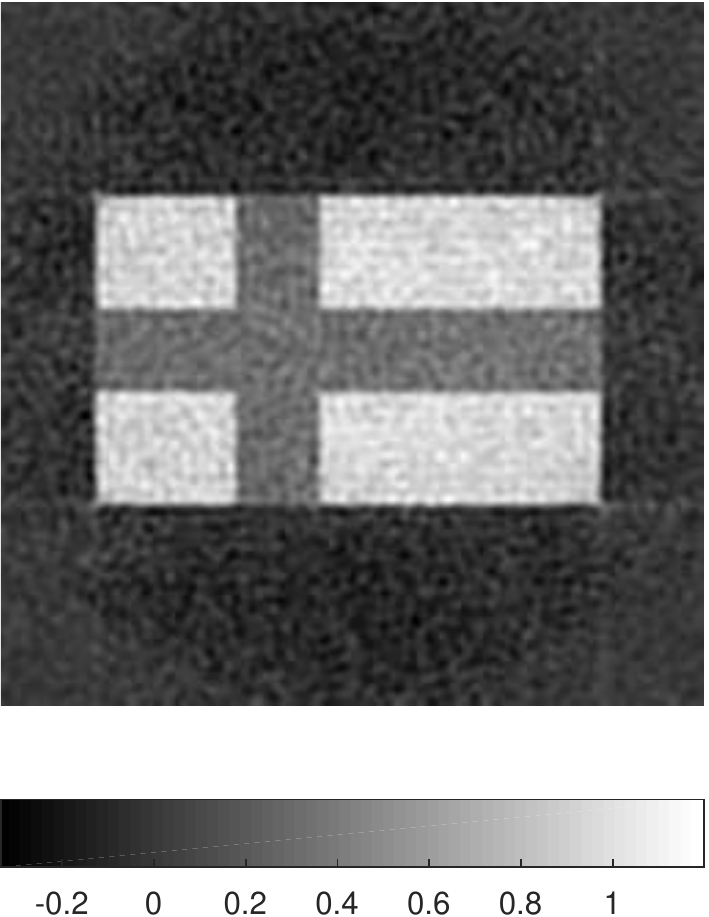}
\caption{$\Theta^F_6$}
\end{subfigure}

\caption{Reconstructions based on data sets of the Flag phantom rotated with respective angles.}
\label{fig.rotationflag}
\end{figure}

\begin{table}
\caption{Reconstruction errors~$\epsilon_p^{0,0}$ of (non-regularized) reconstructions from rotational data sets.}
\label{table.rotationerr}
\begin{tabular}{l 
S[table-format=3.0,table-space-text-post = \%] 
S[table-format=3.0,table-space-text-post = \%] 
S[table-format=3.0,table-space-text-post = \%] 
S[table-format=3.0,table-space-text-post = \%] 
S[table-format=3.0,table-space-text-post = \%] 
S[table-format=3.0,table-space-text-post = \%] 
S[table-format=3.0,table-space-text-post = \%] 
S[table-format=3.0,table-space-text-post = \%] 
S[table-format=3.0,table-space-text-post = \%] }
Shepp--Logan & 
\multicolumn{1}{c}{$\Theta^\mathrm{SL}_1$} & 
\multicolumn{1}{c}{$\Theta^\mathrm{SL}_2$} & 
\multicolumn{1}{c}{$\Theta^\mathrm{SL}_3$} & 
\multicolumn{1}{c}{$\Theta^\mathrm{SL}_4$} & 
\multicolumn{1}{c}{$\Theta^\mathrm{SL}_5$} & 
\multicolumn{1}{c}{$\Theta^\mathrm{SL}_6$} & 
\multicolumn{1}{c}{$\Theta^\mathrm{SL}_7$} & 
\multicolumn{1}{c}{$\Theta^\mathrm{SL}_8$} & 
\multicolumn{1}{c}{$\Theta^\mathrm{SL}_9$} \\
$p=1$ & 113\% &  82\% &  69\% &  61\% &  56\% &  52\% &  49\% &  46\% &  44\%  \\
$p=2$ & 72\% &  54\% &  46\% &  42\% &  40\% &  38\% &  36\% &  34\% &  33\%   \\
$p=\infty$ & 97\% &  89\% &  84\% &  77\% &  77\% &  73\% &  70\% &  69\% &  68\%  \\ 
\vspace{-3mm}\\
Flag & 
\multicolumn{1}{c}{$\Theta^\mathrm{F}_1$} & 
\multicolumn{1}{c}{$\Theta^\mathrm{F}_2$} & 
\multicolumn{1}{c}{$\Theta^\mathrm{F}_3$} & 
\multicolumn{1}{c}{$\Theta^\mathrm{F}_4$} & 
\multicolumn{1}{c}{$\Theta^\mathrm{F}_5$} & 
\multicolumn{1}{c}{$\Theta^\mathrm{F}_6$} \\
$p=1$ &  22\% &  18\% &  16\% &  14\% &  12\% &  12\%  \\
$p=2$ &  25\% &  21\% &  18\% &  16\% &  15\% &  14\%   \\
$p=\infty$ & 96\% &  86\% &  79\% &  66\% &  68\% &  69\%  \\ 
\end{tabular}
\end{table}

{
The theoretical formulation allows to place a phantom inside~$\T^2$ in many different positions. Such choice of an orientation of a phantom leads to different choice of projection directions, and thus results different reconstructions when only finitely many Fourier coefficients are recovered. 
This motivated to test, whether the reconstructions improve when data is acquired from several rotated phantoms. We verified this hypothesis by computing the data of the Shepp--Logan phantom from nine different rotational orientations with $20^\circ$ interval and the Flag phantom with six different rotational orientations with $30^\circ$ interval, both with Fourier coefficient radius $r=50$. 

Denote the angles of rotational data sets of the Shepp--Logan phantom with
$$\Theta_i^{SL} = \left\{ (k-1)\cdot 20^\circ \,;\, k\in\{1,\dots,i\} \right\}$$
and of the Flag phantom with 
$$\Theta_i^F = \left\{ (k-1)\cdot 30^\circ \,;\, k\in\{1,\dots,i\} \right\},$$
where $i\in\N$. Furthermore, denote rotations about the point $(0.5,0.5)$ by the symbols $\theta_k \in \Theta_i$. 

The forward solutions were computed for the rotated phantoms as described in subsection~\ref{ssect:numexp.forward}. Then, we reconstructed the rotated phantoms using formula~(\ref{eq:reconstruction}) and rotated the reconstructions back to the natural orientation, i.e., $-\theta_i$ about the point $(0.5,0.5)$. We denote these naturally oriented reconstructions by $f_\text{rec}^{\alpha,s}(x; k)$ for each $k =1,\dots,i$. 
Eventually, we computed the average of the rotated reconstructions
\begin{equation}
    f_{\Theta_i} := \frac{1}{i}\sum_{k=1}^i f_\text{rec}^{\alpha,s}(x; k).
\end{equation} 
%

For the Shepp--Logan phantom, the rotation was computed using Matlab's \texttt{imrotate} with \texttt{crop} option for the Shepp--Logan phantom. For the Flag phantom, rotation about the point $(0.5,0.5)$ was made by changing the coordinates in equations~\eqref{eq:flag1} and~\eqref{eq:flag2}. For both phantoms, the rotation of the reconstruction to natural orientation was computed using \texttt{imrotate}.
}

The reconstructions $f_{\Theta_i}$ 
are shown in Figure~\ref{fig.rotationshepplogan} for the Shepp--Logan phantom and in Figure~\ref{fig.rotationflag} for the Flag phantom. The reconstruction errors~$\epsilon_p^{0,0}$ (eq.~\ref{eq.errorequation}) are tabulated in Table~\ref{table.rotationerr} for both phantoms. With the Flag phantom, the reconstruction errors are computed only on the support of the flag phantom and the errors are ruled out at the points where the Flag phantom vanish. With the Shepp--Logan,~$\epsilon_p^{0,0}$ was computed on the whole grid. The reconstructions were evaluated in a grid of $256 \times 256$ pixels.

With the Shepp--Logan phantom there is clear visual improvement with the increase of rotationally acquired data and also decreasing trend in the errors. With the simpler Flag phantom, reconstruction quality seems to saturate as the decrease of error norms is not as clear as with the Shepp--Logan phantom. Nevertheless, the smallest error norm is given with the highest number of rotational data (shown in Table~\ref{table.rotationerr}) and the visual evaluations support this.

The rotation of the phantom does contribute to the improvement of reconstructions. In other words, the improvement is not merely due to averaging out the zero mean noise. We simulated data from the same rotational orientation of the Shepp--Logan phantom nine times, and the errors were as follows: $\epsilon_1^{0,0}=52\%$, $\epsilon_2^{0,0}=37\%$ and $\epsilon_\infty^{0,0}=66\%$. 
For the Flag phantom with six times from same rotational orientation, the errors were: $\epsilon_1^{0,0}=43\%$, $\epsilon_2^{0,0}=28\%$ and $\epsilon_\infty^{0,0}=76\%$. 
In both cases the errors were higher than what was gained with different rotational orientations, except for~$\epsilon_\infty^{0,0}$ with Shepp--Logan phantom where the error was almost equal. It should also be noted that the use \texttt{imrotate} induces some blurring during the rotation of reconstructions and the Shepp--Logan phantom. Nonetheless, rotational reconstructions approach performed better.

\subsection{Computing times}
\label{ssec:comptimes}

The computing time of the forward system, i.e., data, depends mainly on the cutoff radius~$r$ of the Fourier series and on the number used geodesics in each direction (see discretization of~$x$ in~\ref{ssec:discretization}). In terms of this paper, the radius~$r$ was more of the interest. Discretization relates to the numerical accuracy and the data acquisition accuracy of experimental setup. 
Example computing times~$t_r$ for data on Lenovo P51 laptop with Intel i7-7820HQ CPU and 32~GB of RAM having MATLAB R2017a (The MathWorks, Inc.) with the Shepp--Logan phantom and $\mathcal{A}_1$ are $t_{50}=5.5~\text{min}$ and $t_{100}=51~\text{min}$; and with the Flag phantom and $\mathcal{A}_2$, $t_{50} = 100~\text{min}$ and $t_{100}=62~\text{min}$.
On Lenovo P910 high-end workstation with two Intel Xeon E5-2697 processors and 256~GB RAM having MATLAB version R2016b \mbox{64-bit} (The MathWorks, Inc.), the computation times were of order 
$t_{50}=2.2~\text{min}$,
$t_{100}=15~\text{min}$,
$t_{150}=60~\text{min}$ and
$t_{200}=188~\text{min}$ with the Shepp--Logan phantom and~$\mathcal{A}_1$; and
$t_{50}=1.6~\text{min}$,
$t_{100}=21~\text{min}$,
$t_{150}=79~\text{min}$ and
$t_{200}=242~\text{min}$ with the Flag phantom~$\mathcal{A}_2$.
The analytical integration applied when using the Shepp--Logan phantom with~$\mathcal{A}_1$ explains its faster computations times.

The projection of Radon transform sinogram to the torus~$\mathcal{A}_{\T^2}$ and its reconstruction lasted approximately eight minutes on Lenovo P51. However, the current implementation was not optimized at all and included, among other, three nested for-loops. Hence, here the computational efficiency will increase during further development.

\section{Conclusions}
\label{sec:conclusions}

{
Our main conclusion is that the new inversion method for the X-ray transform based on the torus is numerically applicable.
We emphasize, however, that this is only a first step.
Much is left to be studied, such as the details of the mapping that transforms the planar X-ray transforms onto the torus and application of different regularization methods.
We have developed the previously unapplied theory in a direction relevant to computation and carried out first numerical tests.
}

Our theoretical results were strongly motivated by practical requirements, including new and computationally fast reconstruction formulas from X-ray data in theorem~\ref{thm:invfor}, and rigorous mathematical theory for Tikhonov regularized reconstructions from X-ray data on the flat torus in theorems~\ref{thm:regularization} and~\ref{thm:strategy}.
We gave mathematical formulations of discretized forward and inverse models in section~\ref{sec:implementation}, and considered numerical analysis in section~\ref{ssec:dft}.
The numerical implementation using Matlab was described in section~\ref{sec:numerics}.

The numerical implementation demonstrated the efficacy of Torus CT.
Torus CT is computationally relatively efficient compared with iterative techniques, though still slower than current implementations of the FBP.
An interesting feature of Torus CT is its meshless nature: Once the Fourier coefficients are computed, the reconstruction can be evaluated in any desired grid points.
Currently, theory and the implementation are established in 2 dimensions, which is suitable for slice-wise reconstructions of 3-dimensional structures.
One future research direction could be the development of algorithms and theory in higher dimensional settings.

Data simulation was also computed with the traditional Radon transform corresponding to experimental image acquisition with projection angles preferred by the Torus CT.
Reconstruction quality was promising { but not quite competitive with state-of-the-art methods}.
Some initial work has been conducted with conventional, evenly distributed projection angles, in which case there are various ways to interpolate projection data to directions preferred by Torus CT.
It seems that rotations of a phantom could result sharp reconstructions and might allow reduction in the number of projection directions.
This question should be studied more and with experimental X-ray data.

{
We find that Torus CT opens up a promising path to studying X-ray tomography in a new way.
It provides a useful setup for future study --- both theoretical, numerical, and practical --- of the many remaining questions.
}

\section*{Supplementary material}
\label{sec:supple}

\subsection*{Matlab code}
We provide Creative Commons 4.0 licensed Matlab code files that implement forward model~$\mathcal{A}_1$ (section~\ref{sssec:forwardontorus}) and inverse solutions on torus (section~\ref{ssec:Computationalinversemodel}). 
The code package comprises of three files: \texttt{TorusCTrun.m} is the main script, \texttt{DFT.m} implements discrete Fourier transform (section~\ref{ssec:dft}), and \texttt{LineIntegralOnGrid.m} computes the exact line integral~\eqref{eq:forwardmodel1int} over periodically extended, pixelized phantom.
{ The files} are available at~\cite{RK19}.

\bibliographystyle{abbrv}
\bibliography{sample}

\end{document}